\theoremstyle{plain}
\newtheorem{theorem}{Theorem}[section]
\newtheorem{proposition}[theorem]{Proposition}
\newtheorem{lemma}[theorem]{Lemma}
\newtheorem{corollary}[theorem]{Corollary}
\theoremstyle{definition}
\newtheorem{remark}[theorem]{Remark}
\newtheorem{example}[theorem]{Example}
\newcommand{\R}{\mathbb R}
\newcommand{\N}{\mathbb N}
\title{Two extremum problems for Neumann eigenvalues}
\author[Cavallina]{Lorenzo Cavallina}
\address[Lorenzo Cavallina]{Mathematical Institute, Graduate School of Science, Tohoku University, Sendai 980-8578, Japan}
\email{cavallina.lorenzo.e6@tohoku.ac.jp}
\author[Funano]{Kei Funano}
\address[Kei Funano]{Division of Mathematics \& Research Center for Pure and Applied Mathematics, Graduate School of Information Sciences, Tohoku University, Sendai 980-8579, Japan}
\email{kfunano@tohoku.ac.jp}
\author[Henrot]{Antoine Henrot}
\address[Antoine Henrot]{Universit\'e de Lorraine CNRS, IECL, F-54000 Nancy, France}
\email{antoine.henrot@univ-lorraine.fr}
\author[Lemenant]{Antoine Lemenant}
\address[Antoine Lemenant]{Institut Universitaire de France and Universit\'e de Lorraine CNRS, IECL, F-54000 Nancy, France}
\email{antoine.lemenant@univ-lorraine.fr}
\author[Lucardesi]{Ilaria Lucardesi}
\address[Ilaria Lucardesi]{Dipartimento di Matematica, University of Pisa, I-56127 Pisa, Italy }
\email{ilaria.lucardesi@unipi.it}
\author[Sakaguchi]{Shigeru Sakaguchi}
\address[Shigeru Sakaguchi]{Graduate School of Information Sciences, Tohoku University, Sendai 980-8579, Japan.}
\email{sigersak@tohoku.ac.jp}
\date{\today}
\begin{document}

\begin{abstract}
Neumann eigenvalues being non-decreasing with respect to domain inclusion, it makes sense to study the two shape optimization problems
$\min\{\mu_k(\Omega):\Omega \mbox{ convex},\Omega \subset D, \}$ (for a given box $D$) and 
$\max\{\mu_k(\Omega):\Omega \mbox{ convex},\omega \subset \Omega, \}$ (for a given obstacle $\omega$).
In this paper, we study existence of a solution for these two problems in two dimensions and we give some qualitative properties. 
We also introduce the notion of 
{\it self-domains} that are domains solutions of these extremal problems for themselves and give examples of the disk and the square.
A few numerical simulations are also presented.
\end{abstract}

\maketitle

{\small
	
	\bigskip
	\noindent\keywords{\textbf{Keywords:} Neumann eigenvalues, monotonicity,  shape optimization}
	
	\bigskip
	\noindent\subjclass{{ MSC: Primary 35P15 Secondary: 49Q10; 52A10; 52A40}
	
	}
	\bigskip
	\bigskip
	
\setcounter{section}{0}
\tableofcontents

\section{Introduction}

Let $\Omega\subset \R^2$ be a domain (a connected open set). We consider the two classical eigenvalue problems:

\begin{equation}\label{Dirichlet eigenvalue problem}
\mbox{Dirichlet-Laplacian} \qquad
\begin{cases}
-\Delta u = \lambda u \quad \text{in } \Omega,\\
u=0\quad \text{on }\partial \Omega,
\end{cases}
\end{equation}

\begin{equation}\label{Neumann eigenvalue problem}
\mbox{Neumann-Laplacian} \qquad
\begin{cases}
-\Delta u = \mu u \quad \text{in } \Omega,\\
\partial_n u=0\quad \text{on }\partial \Omega,
\end{cases}
\end{equation}
where $\partial_n$ denotes the directional derivative with respect to $n$, the outward unit normal vector to $\partial\Omega$.
We recall that no smoothness assumption on $\partial \Omega$ is actually needed for the Dirichlet problem \eqref{Dirichlet eigenvalue problem}, stated
in the weak form
 $$
\text{Find } u\in H_0^1(\Omega):\quad \int_\Omega \nabla u\cdot \nabla \varphi = \lambda \int_\Omega u\varphi\quad \text{for all }\varphi\in H_0^1(\Omega).     
$$
On the other hand, some mild regularity (e.g. Lipschitz) is required for the Neumann problem \eqref{Neumann eigenvalue problem} 
to ensure the compactness
embedding from $H^1(\Omega)$ into $L^2(\Omega)$, leading to the variational problem:
$$
\text{Find } u\in H^1(\Omega):\quad \int_\Omega \nabla u\cdot \nabla \varphi = \mu \int_\Omega u\varphi\quad \text{for all }\varphi\in H^1(\Omega).
$$
In this paper, we will be concerned with planar convex domains, therefore this regularity of the boundary holds.
Here, the eigenvalues of problems  \eqref{Dirichlet eigenvalue problem}-\eqref{Neumann eigenvalue problem} will be counted with multiplicity as follows:
\begin{equation*}
\begin{aligned}
    0<\lambda_1(\Omega)<\lambda_2(\Omega)\le \lambda_3(\Omega)\le \dots \lambda_k(\Omega)\le \lambda_{k+1}(\Omega)\le \dots \to \infty, \\
    0=\mu_0(\Omega)<\mu_1(\Omega)\le\mu_2(\Omega)\le \dots \mu_k(\Omega)\le \mu_{k+1}(\Omega)\le \dots \to\infty.
\end{aligned}
\end{equation*}

{\bf On the monotonicity property of eigenvalues}:
Dirichlet and Neumann eigenvalues share the same homogeneity, but not the same monotonicity.
Indeed, on the one hand we have $\lambda_k(t\Omega)=t^{-2}\lambda_k(\Omega)$ and $\mu_k(t\Omega)=t^{-2} \mu_k(\Omega)$ 
for every $t>0$ and every $k$. On the other hand, as it is well known, Dirichlet eigenvalues are monotonic with respect to set inclusion:
$$\Omega_1 \subset \Omega_2 \Longrightarrow \lambda_k(\Omega_1) \geq \lambda_k(\Omega_2).$$
This is due to the embedding between Sobolev spaces $H^1_0(\Omega_1) \hookrightarrow H^1_0(\Omega_2)$
(together with the fact that the Rayleigh quotient is unchanged by extending functions by zero outside $\Omega_1$) .
Now this monotonicity property is false for Neumann eigenvalues as shown by the following elementary example:
taking a thin rectangle $\Omega_1$ close (from the inside) to the diagonal of a square $\Omega_2$, it is immediate to check that 
$\mu_1(\Omega_1) < \mu_1(\Omega_2)$. This example is represented in Fig. \ref{fig-diagonal}.
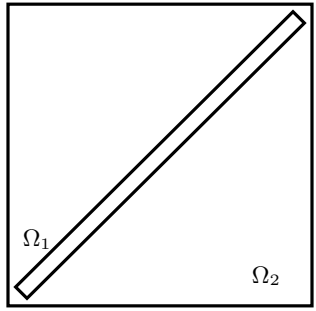
\begin{figure}[h]
\centering
\begin{tikzpicture}
\draw[very thick] (0,0) rectangle (4,4);
\draw[very thick] (0.25,0.1)--(3.9,3.75)--(3.75,3.9)--(0.1,0.25) -- cycle;
\draw (3.4,0.4,0) node {$\Omega_2$};
\draw (0.5,,0.3) node {$\Omega_1$}; 
\end{tikzpicture}
\caption{Example of $\Omega_1 \subset \Omega_2$ with $\mu_1(\Omega_1)< \mu_1(\Omega_2)$.} \label{fig-diagonal}
\end{figure}

Therefore, it makes sense to consider the two following problems for any integer $k\geq 1$.
The {\it interior problem}
$$
(INTP)_k\left\lbrace
\begin{array}{l}
\mbox{Let $D$ be an open convex domain (non-empty), we look for a convex domain}\\
\mbox{ $\Omega_k^*$ \textit{contained into} $D$ solution of}\\
\mu_k(\Omega_k^*)=\min \{\mu_k(\Omega), \Omega \; \hbox{convex}, \Omega \subset D\}
 \end{array}  \right.
$$
and the {\it exterior problem}:
$$
(EXTP)_k\left\lbrace
\begin{array}{l}
\mbox{Let $\omega$ be a, open convex domain (non-empty) we look for a convex domain }\\
\mbox{$D_k^*$ \textit{containing} $\omega$ solution of}\\
\mu_k(D_k^*)=\max \{\mu_k(D), D \; \hbox{convex}, \omega \subset D\}.
 \end{array}  \right.
$$
Thus, for any $k\in \N, k\geq 1$ and any bounded convex domain $D$ we can introduce the following quantity:
\begin{equation}\label{I_k def}
    I_k(D):= \inf\left\{ \mu_k(\Omega) \;:\; 
    \Omega \subset D,\; \Omega \text{ is a convex domain} \right\},
\end{equation}
and for any bounded convex domain $\omega$ we also introduce
$$
   \hat{I}_k(\omega):= \sup\left\{ \mu_k(D) \;:\; 
    \omega \subset D,\; D \text{ is a convex domain} \right\}.
$$
We will see below, see Theorem \ref{theoexisext}, that the above supremum is actually a maximum while it is not necessarily true
for the infimum in \eqref{I_k def}.  In any cases when the minimum or the maximum is achieved, we will denote respectively
by $\Omega_k^*$ and $D_k^*$ the minimizer (for Problem $(INTP)_k$) and the maximizer (for Problem $(EXTP)_k$).

The aim of this paper is to study these two shape optimization problems. In Section \ref{secexistence} we discuss the question of existence.
As already mentioned, we prove that we always have existence for the exterior problem, while for the interior one we prove that we
never have existence for $k=1$ and we give a practical criterion ensuring existence for $k\geq 2$ and give several examples.
Then in Section \ref{secqualitative}, we give some qualitative properties of the optimal domains.
In particular we will be interested in those convex domains that are themselves the solution for some $k$, i.e. they verify
either $I_k(D)=\mu_k(D)$ or  $ \hat{I}_k(\omega)=\mu_k(\omega)$.  Such convex domains will be referred to as 
$k$-\emph{interior self-domains} or $k$-\emph{exterior self-domains}. 
It is not easy to prove that a given domain is a self-domain while it is much more easy to prove
that it is not. In Section \ref{secsquaredisk} we will consider the particular cases of the square and the disk.
We will prove that, for these two examples, we always have existence of an optimal domain (for the interior problem and $k\geq 2$) and moreover we
will give values of the index $k$ for which they can or they cannot be self-domains.

In some sense, one can say that, concerning the minimization problem for the $k$th Neumann eigenvalue, $k$ self-domains are ``better" competitors than any of their convex subdomains,  and thus, they satisfy some kind of ``inner monotonicity property". 
One can also quantify the lack of said ``inner monotonicity property" by the value of the following shape functional:
$$
J_k(D):= \frac{I_k(D)}{\mu_k(D)},    
$$
defined for all bounded convex domains $D\subset \R^2$.
By definition, $0\le J_k(D)\le 1$, with $J_k(D)=1$ holding if and only if $D$ is a $k$-interior self-domain. 
Note that we can also compute the quantity 
$$
\hat{J}_k(\omega):= \frac{\mu_k(\omega)}{\hat{I}_k(\omega)} .
$$
We will study in Section \ref{secJk} the shape minimization problem of finding those convex domains which exhibit the greatest lack of ``inner monotonicity property":
$$
M_k:=\inf \left\{ J_k(D) \;:\; D\text{ is a convex domain of }\R^2 \right\}.
$$
It is clear that it is equivalent to minimize the functional $\hat{J}_k$. We suspect the non-existence of a minimizer, namely
that minimizing sequences converge to a segment and in that case, we can describe the precise behavior of such a minimizing sequence.
In any case, we will give bounds for the infimum  $M_k$ of $J_k$.

At last, in Section \ref{secnum} we give some simple numerical examples in the case of the square or the disk that illustrate 
some properties of the optimal domains.

Let us mention that, while completing the writing of this paper, Pedro Freitas sent us his preprint \cite{Fre-Ken} where the authors are
interested in the same question. In particular, they study (with our notations) the functional $J_k$ and obtain bounds similar to
the ones we present in Section \ref{secJk}.
\section{Existence of optimal domains}\label{secexistence}
In several places in this paper we will use the following result that is an adaptation of a Lemma we can
find in Buser's book, \cite[Section 8.2.1]{Bus}.  The original proof is for Riemann surfaces. 
For the benefit of the reader, we rewrite the proof at the end of the paper (Appendix A), by adapting the original one in our setting.
Let us precise that by $j$-partition of $\Omega$ we mean a collection of sets $\Omega_1,\ldots,\Omega_j$ such that
$$\bar{\Omega}=\bar{\Omega}_1\cup \bar{\Omega}_2\ldots \cup \bar{\Omega}_j$$
 and 
$$\Omega_{j_1} \cap \Omega_{j_2}=\emptyset \quad\mbox{ for any pair $j_1\not= j_2$.}$$

\begin{lemma}[Generalized Buser Lemma]\label{BuserLemma} For any bounded domain $\Omega$ that is decomposed into a $j$-partition 
$\Omega_1,\ldots,\Omega_j$ and for any decomposition of the integer $k$ as the sum of $j$ positive integers: $k=k_1+\ldots + k_j$, we have 
\begin{eqnarray}\label{busineq}
\mu_k(\Omega)\geq \min_i \mu_{k_i}(\Omega_i).
\end{eqnarray}
Moreover, if the inequality \eqref{busineq} is an equality, then $\mu_{k_i}(\Omega_i)=\mu_{k}(\Omega)$ for all $i$ and there exists an eigenfunction $\varphi \in H^1(\Omega)$ associated with $\mu_k(\Omega)$ whose restriction to each $\Omega_i$ is also a Neumann eigenfunction for $\mu_{k_i}(\Omega_i)$. 
\end{lemma}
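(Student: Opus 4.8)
The plan is to combine the variational (Courant--Fischer) characterization of Neumann eigenvalues with a dimension-counting argument on a carefully chosen test space. Write $m := \min_i \mu_{k_i}(\Omega_i)$, and recall that on a bounded Lipschitz domain the eigenvalue $\mu_k$ (the $(k+1)$-st in the list starting at $\mu_0 = 0$) is captured by the Rayleigh quotient $R_\Omega(u) = \int_\Omega |\nabla u|^2 / \int_\Omega u^2$. First I would fix an $L^2(\Omega)$-orthonormal family $\psi_0, \ldots, \psi_k$ of eigenfunctions of $\Omega$ for $\mu_0(\Omega) \le \ldots \le \mu_k(\Omega)$ and set $V := \mathrm{span}\{\psi_0, \ldots, \psi_k\}$, of dimension $k+1$. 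Every $u \in V$ satisfies $R_\Omega(u) \le \mu_k(\Omega)$, since on $V$ the quotient is a weighted average of $\mu_0(\Omega), \ldots, \mu_k(\Omega)$.

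Next, on each piece $\Omega_i$ let $E_i \subset L^2(\Omega_i)$ be the $k_i$-dimensional span of the first $k_i$ Neumann eigenfunctions of $\Omega_i$ (those associated with $\mu_0(\Omega_i), \ldots, \mu_{k_i-1}(\Omega_i)$). The key linear-algebra step is to produce a nonzero $u \in V$ whose restriction $u|_{\Omega_i}$ is $L^2(\Omega_i)$-orthogonal to $E_i$ for every $i$. This imposes $\sum_i k_i = k$ linear conditions on the $(k+1)$-dimensional space $V$, so a nonzero solution exists. For this $u$, the standard variational characterization of $\mu_{k_i}(\Omega_i)$ gives $\int_{\Omega_i} |\nabla u|^2 \ge \mu_{k_i}(\Omega_i) \int_{\Omega_i} u^2 \ge m \int_{\Omega_i} u^2$ on each piece. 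Summing over $i$, and using that the partition covers $\Omega$ up to a set of measure zero so that the Dirichlet energies and $L^2$-masses are additive for $u \in H^1(\Omega)$, I obtain $R_\Omega(u) \ge m$; chaining this with $R_\Omega(u) \le \mu_k(\Omega)$ yields \eqref{busineq}.

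For the equality case I would run the chain of inequalities backwards. If $\mu_k(\Omega) = m$, then $R_\Omega(u) = \mu_k(\Omega)$, which forces the coefficients of $u$ along the lower modes to vanish, so $u$ is a genuine eigenfunction for $\mu_k(\Omega)$; moreover each per-piece estimate must be an equality, $\int_{\Omega_i} |\nabla u|^2 = \mu_{k_i}(\Omega_i) \int_{\Omega_i} u^2 = m \int_{\Omega_i} u^2$. The main obstacle is excluding the degenerate possibility that $u$ vanishes identically on some $\Omega_i$, which would render the middle equality vacuous. Here I would invoke interior elliptic regularity, so that $u$ is smooth and solves $-\Delta u = \mu_k(\Omega) u$ classically in $\Omega$, together with the unique continuation property: since $\Omega$ is connected, an eigenfunction vanishing on a nonempty open subset vanishes everywhere, contradicting $u \ne 0$. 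Hence $\int_{\Omega_i} u^2 > 0$ for every $i$, forcing $\mu_{k_i}(\Omega_i) = m = \mu_k(\Omega)$.

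Finally, to see that $u|_{\Omega_i}$ is itself a Neumann eigenfunction, I would expand it in the eigenbasis of $\Omega_i$. Orthogonality to $E_i$ leaves only modes with eigenvalue $\ge \mu_{k_i}(\Omega_i)$, and the equality $R_{\Omega_i}(u) = \mu_{k_i}(\Omega_i)$ is again a weighted average attaining its minimum, so the coefficients of all strictly higher modes vanish. Thus $u|_{\Omega_i}$ lies in the $\mu_{k_i}(\Omega_i)$-eigenspace, i.e. it is a Neumann eigenfunction of $\Omega_i$ for $\mu_{k_i}(\Omega_i)$, completing the proof.
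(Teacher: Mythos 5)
Your proof is correct and follows essentially the same route as the paper's (Appendix A): a dimension count produces a nonzero element of the span of the first $k+1$ Neumann eigenfunctions of $\Omega$ that is $L^2$-orthogonal, on each $\Omega_i$, to the first $k_i$ eigenfunctions of $\Omega_i$, and the two-sided Rayleigh-quotient estimate yields both the inequality and the equality analysis. Your handling of the equality case is in fact slightly more careful than the paper's, since you explicitly exclude, via interior regularity and unique continuation on the connected set $\Omega$, the degenerate possibility that the test function vanishes identically on some piece $\Omega_i$ --- a point the paper's proof passes over in silence.
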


The following immediate corollary is also useful.

\begin{corollary}[Buser Bound]\label{BuserLemma2} For any bounded domain $\Omega$ that is decomposed into a $k$-partition 
$\Omega_1,\ldots,\Omega_k$ in such a way that each $\Omega_i$ is convex, then we have 
 \begin{eqnarray}
\mu_k(\Omega)\geq \frac{\pi^2}{\max_i \mathrm{diam}^2(\Omega_i)} \nonumber
\end{eqnarray}
where $\mathrm{diam}(D_i)$ denotes the diameter of $D_i$.
\end{corollary}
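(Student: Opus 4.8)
The plan is to derive the corollary directly from the Generalized Buser Lemma (Lemma \ref{BuserLemma}) by specializing the partition and the decomposition of $k$, and then replacing the eigenvalues $\mu_{k_i}(\Omega_i)$ on the right-hand side by an explicit lower bound coming from the convexity of each piece. First I would take the given $k$-partition $\Omega_1,\ldots,\Omega_k$ and apply Lemma \ref{BuserLemma} with the trivial decomposition $k=1+1+\cdots+1$ (that is, $k_i=1$ for every $i$, which is exactly a sum of $k$ positive integers). This yields
$$
\mu_k(\Omega)\ge \min_{1\le i\le k}\mu_1(\Omega_i).
$$

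Next I would bound $\mu_1(\Omega_i)$ from below for each convex piece $\Omega_i$. Here is the only nontrivial input: the Payne--Weinberger inequality, which states that for a bounded convex domain $\Omega_i\subset\R^n$ one has $\mu_1(\Omega_i)\ge \pi^2/\mathrm{diam}^2(\Omega_i)$. Applying this to each $\Omega_i$ gives
$$
\mu_k(\Omega)\ge \min_{1\le i\le k}\frac{\pi^2}{\mathrm{diam}^2(\Omega_i)}=\frac{\pi^2}{\max_{1\le i\le k}\mathrm{diam}^2(\Omega_i)},
$$
where the last equality follows because $t\mapsto \pi^2/t$ is decreasing, so minimizing the quotient amounts to maximizing the diameter. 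This is precisely the claimed Buser Bound.

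The main obstacle, and the only place where real content enters beyond the already-granted Lemma \ref{BuserLemma}, is the sharp lower bound on $\mu_1$ of a convex set in terms of its diameter. I would invoke the classical Payne--Weinberger estimate rather than reproving it; its use is exactly what forces the hypothesis that each $\Omega_i$ be convex (the bound fails for general domains, which can have arbitrarily small first Neumann eigenvalue even with bounded diameter, e.g. dumbbell-type domains). Everything else is a routine specialization of the partition lemma and an elementary monotonicity observation, so the proof is genuinely a short corollary.
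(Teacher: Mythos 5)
Your proof is correct and follows exactly the paper's argument: apply the Generalized Buser Lemma with $k_i=1$ for all $i$, then bound each $\mu_1(\Omega_i)$ from below via the Payne--Weinberger inequality and pass the minimum onto the maximal diameter. The only cosmetic difference is that the paper states Payne--Weinberger with a strict inequality, which is immaterial for the claimed bound.
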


\begin{proof} We apply Lemma \ref{BuserLemma} with  $k_i=1$ for all $i$, yielding
$$\mu_k(\Omega)\geq \min_{1\leq  i\leq k}(\mu_{1}(\Omega_i)).$$
Now we use the Payne-Weinberger inequality, see  \cite{PW}, which provides a lower bound for $\mu_1$ on convex sets, in terms of the diameter.
More precisely, it says that
$$
\mu_1(\Omega_i) > \frac{\pi^2}{\textrm{diam}^2(\Omega_i)}.
$$
This allows us to conclude that
$$\mu_k(\Omega)\geq   \frac{\pi^2}{\max_{1\leq i \leq k} \mathrm{diam}^2(\Omega_i)}.$$
\end{proof}

\subsection{The interior problem}
We start with a non-existence result that is actually inspired by the counterexample we gave at the beginning.
\begin{theorem}\label{theoI1}
The infimum
$$I_1(D)=\inf\left\{ \mu_1(\Omega) \;:\; 
    \Omega \subset D,\; \Omega \text{ is a convex domain} \right\}$$
 is not attained. Moreover, it is given by 
 $$I_1(D)=\frac{\pi^2}{\mathrm{diam}^2(D)}$$
 where $\mathrm{diam}(D)$ denotes the diameter of $D$.
\end{theorem}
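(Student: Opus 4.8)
The plan is to prove the two assertions — the value of the infimum and its non-attainment — essentially at once, the main tool being the Payne--Weinberger inequality already invoked in Corollary \ref{BuserLemma2}. First I would establish the lower bound. For any convex competitor $\Omega \subset D$ we have $\mathrm{diam}(\Omega) \le \mathrm{diam}(D)$, so Payne--Weinberger gives
$$\mu_1(\Omega) > \frac{\pi^2}{\mathrm{diam}^2(\Omega)} \ge \frac{\pi^2}{\mathrm{diam}^2(D)}.$$
Since this holds for every admissible $\Omega$, we obtain $I_1(D) \ge \pi^2/\mathrm{diam}^2(D)$. The crucial point is that the inequality between $\mu_1(\Omega)$ and $\pi^2/\mathrm{diam}^2(D)$ is \emph{strict}, which is exactly what will rule out a minimizer.

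Next I would prove the matching upper bound by exhibiting a degenerating recovery sequence. Let $A, B \in \bar D$ realize the diameter, $|A - B| = \mathrm{diam}(D)$. Since $D$ is open and convex, the open segment joining $A$ and $B$ lies in $D$; shrinking it slightly to a sub-segment of length $\mathrm{diam}(D) - 2\delta$ at positive distance from $\partial D$, I can inscribe in $D$ a thin rectangle $R_{\delta,\varepsilon}$ of dimensions $(\mathrm{diam}(D) - 2\delta) \times \varepsilon$ with long side parallel to $B - A$, for all $\varepsilon$ small enough. The Neumann spectrum of a rectangle is explicit, its eigenvalues being $\left(n\pi/(\mathrm{diam}(D)-2\delta)\right)^2 + \left(m\pi/\varepsilon\right)^2$ for integers $n, m \ge 0$, so as soon as $\varepsilon < \mathrm{diam}(D) - 2\delta$ the first nonzero eigenvalue is the longitudinal mode,
$$\mu_1(R_{\delta,\varepsilon}) = \frac{\pi^2}{(\mathrm{diam}(D)-2\delta)^2}.$$
Hence $I_1(D) \le \pi^2/(\mathrm{diam}(D)-2\delta)^2$, and letting $\delta \to 0$ yields $I_1(D) \le \pi^2/\mathrm{diam}^2(D)$. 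Combined with the lower bound, this proves $I_1(D) = \pi^2/\mathrm{diam}^2(D)$.

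Finally, non-attainment is immediate from the strict Payne--Weinberger inequality recorded above: every convex $\Omega \subset D$ satisfies $\mu_1(\Omega) > \pi^2/\mathrm{diam}^2(D) = I_1(D)$, so no admissible domain can realize the infimum.

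I expect no serious obstacle. The only points needing care are two elementary geometric facts: that the open diameter segment of an open convex set lies inside it, which is what allows the thin rectangles to be inscribed while their length approaches $\mathrm{diam}(D)$; and that the first nonzero Neumann eigenvalue of a sufficiently thin rectangle is the longitudinal mode rather than a transverse one. Both are routine, so the heart of the argument is really the interplay between the \emph{strict} lower bound furnished by Payne--Weinberger and a recovery sequence collapsing onto the diameter segment.
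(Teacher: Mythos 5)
Your proof is correct and follows essentially the same route as the paper: the strict Payne--Weinberger inequality gives both the lower bound $I_1(D)\ge \pi^2/\mathrm{diam}^2(D)$ and the non-attainment, and thin inscribed rectangles collapsing onto the diameter give the matching upper bound. One small caveat: the open diameter segment of an open convex set need not lie in $D$ (consider the flat side of a half-disk), so to place the sub-segment at positive distance from $\partial D$ you should shrink $\bar D$ toward an interior point rather than only along the segment direction; this is a routine fix and does not affect the argument.
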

\begin{proof}
First of all, by Payne-Weinberger inequality, see \cite{PW}, for any convex subdomain $\Omega$ of $D$ we have
$$\mu_1(\Omega)>\frac{\pi^2}{\mathrm{diam}^2(\Omega)} \geq \frac{\pi^2}{\mathrm{diam}^2(D)}.$$
For the converse inequality let $[AB]$ a segment realizing the diameter of the closure $\bar{D}$. By the convexity of $D$, for any $\varepsilon>0$
it is possible to construct a thin rectangle of length $\mathrm{diam}(D)-\varepsilon$ contained into $D$. Since its first eigenvalue
is $\pi^2/(\mathrm{diam}(D)-\varepsilon)^2$ the result follows.
\end{proof}
Now, in the general case, $k\geq 2$, we give a useful criterion to prove existence.
\begin{theorem}\label{existencelemma}
    Let $D$ be a planar bounded convex domain. There exists a minimizer for the interior problem \eqref{I_k def} if and only if
    there exists a subdomain $\Omega \subset D$ such that 
\begin{equation}\label{necandsuff}
\mu_k(\Omega)\leq \frac{k^2\pi^2}{\mathrm{diam}^2(D)}
\end{equation}
where $\mathrm{diam}(D)$ denotes the diameter of $D$.
\end{theorem}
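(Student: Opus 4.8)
The plan is to reduce everything to two ingredients: a \emph{universal} upper bound $I_k(D)\le \Theta$, where $\Theta:=k^2\pi^2/\mathrm{diam}^2(D)$, and a lower bound controlling what happens when a minimizing sequence collapses to a segment. I first record the upper bound, which generalizes the construction in Theorem \ref{theoI1}: letting $[AB]$ realize the diameter $L=\mathrm{diam}(D)$, convexity lets me fit into $D$ a thin rectangle of length $L-\varepsilon$ and width $\delta$. Its Neumann spectrum starts with the purely longitudinal modes $0,\pi^2/(L-\varepsilon)^2,\dots,k^2\pi^2/(L-\varepsilon)^2$, and for $\delta$ small enough the first transverse mode $\pi^2/\delta^2$ exceeds all of these, so $\mu_k=k^2\pi^2/(L-\varepsilon)^2$. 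Letting $\varepsilon\to0$ gives $I_k(D)\le\Theta$. This already settles the ``only if'' direction: if a minimizer $\Omega_k^*$ exists then $\mu_k(\Omega_k^*)=I_k(D)\le\Theta$, so \eqref{necandsuff} holds with $\Omega=\Omega_k^*$.

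For the ``if'' direction I take a minimizing sequence $\Omega_n\subset D$ with $\mu_k(\Omega_n)\to I_k(D)$ and apply the Blaschke selection theorem to extract a subsequence whose closures converge in Hausdorff distance to a compact convex set $K\subset\overline D$. I then split into cases according to the dimension of $K$. If $K$ has nonempty interior, then $\Omega_\infty:=\mathrm{int}(K)$ is a convex domain and, since $\mathrm{int}(\overline D)=D$ for the convex set $D$, it is contained in $D$; invoking the continuity of Neumann eigenvalues under Hausdorff convergence of nondegenerate convex (hence uniformly Lipschitz) domains, I obtain $\mu_k(\Omega_\infty)=\lim_n\mu_k(\Omega_n)=I_k(D)$, so $\Omega_\infty$ is a minimizer. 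A point limit is excluded, because $\mathrm{diam}(\Omega_n)\to0$ would force $\mu_k(\Omega_n)\ge\mu_1(\Omega_n)>\pi^2/\mathrm{diam}^2(\Omega_n)\to\infty$, contradicting convergence to the finite value $I_k(D)$.

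The remaining case --- $K$ a segment $S$ of length $\ell\in(0,L]$ --- is the heart of the argument, and the main obstacle. Here I claim $\liminf_n\mu_k(\Omega_n)\ge k^2\pi^2/\ell^2$, which I prove directly from the Buser bound (Corollary \ref{BuserLemma2}). After rotating so that $S$ is horizontal, Hausdorff convergence confines $\Omega_n$, for large $n$, to a rectangle $[-w_n,\ell+w_n]\times[-w_n,w_n]$ with thickness $w_n\to0$. Cutting $\Omega_n$ by $k-1$ vertical lines into $k$ slabs of equal horizontal extent $(\ell+2w_n)/k$ produces a $k$-partition whose pieces are convex (intersections of $\Omega_n$ with open vertical strips) and, for $n$ large, nonempty, with diameter at most $\sqrt{((\ell+2w_n)/k)^2+4w_n^2}$. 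Corollary \ref{BuserLemma2} then yields $\mu_k(\Omega_n)\ge \pi^2/\big(((\ell+2w_n)/k)^2+4w_n^2\big)$, and letting $n\to\infty$ gives the claim. Consequently $I_k(D)\ge k^2\pi^2/\ell^2\ge\Theta$, which combined with $I_k(D)\le\Theta$ forces $I_k(D)=\Theta$. But by hypothesis \eqref{necandsuff} there is a convex $\Omega\subset D$ with $\mu_k(\Omega)\le\Theta=I_k(D)$, and since $\mu_k(\Omega)\ge I_k(D)$ by definition of the infimum, $\Omega$ itself is a minimizer.

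I expect the delicate points to be the two convergence statements feeding the dichotomy: the continuity of $\mu_k$ for the nondegenerate limit (which relies on the uniform Lipschitz regularity of convex domains and should be quoted from the literature), and the reduction of the collapsing case to the Buser bound, where convexity is exactly what guarantees that the transverse slabs remain convex with controlled diameter, so that Corollary \ref{BuserLemma2} applies and pins the collapse threshold to the segment length.
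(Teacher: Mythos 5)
Your proposal is correct and follows essentially the same route as the paper: the "only if" direction via thin rectangles along the diameter, and the "if" direction by analyzing a minimizing sequence, using Buser's lemma together with Payne--Weinberger on $k$ vertical slabs to show that collapse to a segment forces $\liminf_n\mu_k(\Omega_n)\ge k^2\pi^2/\mathrm{diam}^2(D)$, whence the hypothesized $\Omega$ is itself a minimizer. Your write-up is slightly more explicit than the paper's (ruling out the point limit, and spelling out the final step $\mu_k(\Omega)\le\Theta=I_k(D)\le\mu_k(\Omega)$), but the ideas coincide.
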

\begin{remark}
    Note that for a sequence of thin rectangles $R_\varepsilon$ approaching the diameter of $D$, we have indeed
    $\mu_k(R_\varepsilon) \longrightarrow \frac{k^2\pi^2}{\mathrm{diam}^2(D)}$, the convergence being from above.
\end{remark}
\begin{proof}[Proof of Theorem \ref{existencelemma}]
    According to the previous remark, if there exists a minimizer $\Omega_k^*$ necessarily
    $$\mu_k(\Omega_k^*) \leq \liminf_{\varepsilon\to 0} \mu_k(R_\varepsilon) = \frac{k^2\pi^2}{\mathrm{diam}^2(D)}.$$
    Conversely, let us assume that there exists a subdomain $\Omega$ such that 
    $\mu_k(\Omega)\leq k^2\pi^2/\mathrm{diam}^2(D).$ Let us consider a minimizing sequence $\Omega_n$
    for the minimization problem \eqref{I_k def}. Our aim is to prove that $\Omega_n$ converges to
    a bounded (convex) open set $\Omega^*$
    both for Hausdorff convergence and convergence of characteristic functions. This will imply that
    $\mu_k(\Omega_n) \longrightarrow \mu_k(\Omega^*)$, see \cite[Section 3.7]{HenPie} and therefore
    $\Omega^*$ is a minimizer.
 
Now, assume that the minimizing sequence of convex sets does not converge to a convex open set. Necessarily,
it has to ``collapse" to a segment $\Sigma$. Let us denote by $L$ the length of $\Sigma$: $L\leq \mathrm{diam}(D)$.
In other words, up to a subsequence, $\Omega_n$ will be contained 
in a rectangle $R_n$ of base $\Sigma_n$ of length $L+1/n$ and height $w_n \longrightarrow 0$.
We cut the rectangle $R_n$ in $k$ equal pieces $R_{j,n}$, each cut being done parallel to the shortest side
of length $w_n$,  and denote $\Omega_{j,n}=\Omega_n \cap R_{j,n}$.
Each $\Omega_{j,n}$ being convex, we have by the Payne-Weinberger inequality
$$\mu_1(\Omega_{j,n}) \geq \frac{\pi^2}{\mathrm{diam}^2(\Omega_{j,n})}\geq \frac{\pi^2}{\mathrm{diam}^2(R_{j,n})}=
\frac{\pi^2}{\frac{(L+1/n)^2}{k^2}+w_n^2}.$$
Therefore, using Lemma \ref{BuserLemma}, we finally get
$$\liminf_{n\to \infty} \mu_k(\Omega_n) \geq \lim_{n\to \infty} \frac{\pi^2}{\frac{(L+1/n)^2}{k^2}+w_n^2}= \frac{k^2\pi^2}{L^2}
\geq \frac{k^2\pi^2}{\mathrm{diam}^2(D)}.$$
This shows that $L=\mathrm{diam}(D)$ and $\Omega$ itself must be a minimizer such that $\mu_k(\Omega) = k^2\pi^2/\mathrm{diam}^2(D).$
\end{proof}

\begin{example} A family of convex shapes for which we have existence of a solution to $I_2$ is that of constant width bodies. Let $D$ be a convex body of constant width 1. In particular $\mathrm{diam}(D)=1$. 
In order to check condition \eqref{necandsuff}, we consider as $\Omega\subset D$ an inscribed disk with radius $\rho$, being $\rho$ the inradius of $D$. Using the homogeneity of $\mu_2$ and the exact value of $\mu_2$ at disks, we obtain
$$
\mu_2(\Omega)=\frac{\mu_2(B_1) }{\rho^2}= \frac{(j_{1,1}')^2}{\rho^2}.
$$
The right-hand side is below $4\pi^2$ (we recall that here $\mathrm{diam}(D)=1$) provided that
$$
\rho > \frac{j_{1,1}'}{2\pi}.
$$
This is true since $j_{1,1}' /(2\pi) \sim 0.293$ and since the minimal inradius of a constant width body is that of the Reuleaux triangle, which here (taking the width to be 1) is equal to $1-1/\sqrt{3}\sim 0.4226$.
\end{example}

We now use the criterion \eqref{necandsuff} to prove that, for every domain $D$ there exists a minimizer for $k$ large enough
(and we give a quantitative value for this ``large enough").
\begin{corollary}\label{corexis} 
Given a planar bounded convex domain $D$, there exists $k_0=k_0(D)\in \mathbb N$ such that the minimization problem 
$(INTP)_k$ has a solution for every $k\geq k_0$. 

Moreover we have
\begin{equation}\label{k0D}
 k_0(D)\leq \frac{8\, \mathrm{diam}(D)}{\pi w^\perp},  
 \end{equation} 
where $w^\perp$ denotes the width of $D$ in the direction orthogonal to that of the diameter.
\end{corollary}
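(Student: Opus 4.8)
The plan is to feed everything into the existence criterion of Theorem \ref{existencelemma}: to produce a minimizer for $(INTP)_k$ it is enough to exhibit one convex subdomain $\Omega\subset D$ with $\mu_k(\Omega)\le k^2\pi^2/\mathrm{diam}^2(D)$. The Remark preceding that theorem shows this threshold is approached \emph{from above} by long thin rectangles, so any set that actually satisfies it must exploit the transverse extent of $D$; heuristically, a \emph{fixed} inscribed shape has $\mu_k$ growing only linearly in $k$ (Weyl regime) while the right-hand side grows quadratically, so the inequality is forced once $k$ is large. I would therefore construct a single subdomain, independent of $k$, for which this linear-versus-quadratic crossover can be made quantitative with the sharp constant $8/(\pi w^\perp)$.

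First I would fix a segment $[AB]$ realizing $L:=\mathrm{diam}(D)$, place it on the horizontal axis, and take $P,Q\in\overline{D}$ on the two supporting lines of $D$ parallel to $[AB]$, at heights $h^+\ge 0$ and $-h^-\le 0$ with $h^++h^-=w^\perp$. By convexity the (possibly degenerate) open quadrilateral $K:=\mathrm{int}\,\mathrm{conv}\{A,B,P,Q\}$ is a convex subdomain of $D$, and slicing it along the diagonal $[AB]$ into two triangles of common base $L$ gives
$$|K|=\tfrac12 L\,h^+ + \tfrac12 L\,h^- = \tfrac12\,L\,w^\perp.$$
This is the geometric heart of the matter: because $K$ spans the full orthogonal width, its area equals half the bounding box $L\times w^\perp$, which is the largest area one can guarantee for a set inscribed between $A,B$ and the two transverse supporting lines.

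The crucial analytic input is an upper bound for $\mu_k(K)$ carrying the exact Weyl constant. Here I would use that every convex quadrilateral tiles the plane, so that P\'olya's theorem applies to the tiling piece $K$ and yields, for all $k\ge 1$,
$$\mu_k(K)\le\frac{4\pi k}{|K|}=\frac{8\pi k}{L\,w^\perp}.$$
Comparing with the criterion, the inequality $\mu_k(K)\le k^2\pi^2/L^2$ holds as soon as $8\pi k/(Lw^\perp)\le k^2\pi^2/L^2$, that is, as soon as $k\ge 8L/(\pi w^\perp)=8\,\mathrm{diam}(D)/(\pi w^\perp)$. Since $K$ does not depend on $k$, Theorem \ref{existencelemma} then delivers a minimizer of $(INTP)_k$ for every such $k$, which is precisely the announced bound on $k_0(D)$.

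The main obstacle is exactly the analytic estimate $\mu_k(K)\le 4\pi k/|K|$, and it is where the non-monotonicity of Neumann eigenvalues bites hardest: one cannot bound $\mu_k(D)$ by enlarging $D$ to its bounding box, nor simply invoke P\'olya's conjecture for $D$ itself, which is open for general convex sets. The resolution is to descend to an inscribed set that simultaneously (i) keeps the full area $\tfrac12 Lw^\perp$ and (ii) tiles the plane, so that the sharp counting bound $N^{\mathrm{Neu}}_K(\mu)\ge |K|\mu/(4\pi)$ becomes available. I would also flag, for contrast, that the softer route applying Kr\"oger's averaged inequality $\tfrac1k\sum_{j<k}\mu_j(D)\le 2\pi k/|D|$ directly to $\Omega=D$ controls only the spectral average; passing to the individual eigenvalue $\mu_k$ through monotonicity of the spectrum costs a factor $2$ and yields the weaker constant $16/(\pi w^\perp)$. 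It is the \emph{individual} P\'olya bound on the tiling piece $K$ that recovers the stated $8/(\pi w^\perp)$. The only loose ends to tidy are the degenerate case $h^+=0$ or $h^-=0$ (where $K$ is a triangle, which still tiles) and the harmless off-by-one in the counting/indexing.
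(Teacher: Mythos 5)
Your proposal is correct and follows essentially the same route as the paper: inscribe the quadrilateral with vertices at the two diameter endpoints and at two points on the supporting lines parallel to the diameter, compute its area as $\tfrac12\,\mathrm{diam}(D)\,w^\perp$, apply P\'olya's inequality for tiling domains to get $\mu_k\le 8\pi k/(\mathrm{diam}(D)\,w^\perp)$, and compare with the criterion of Theorem \ref{existencelemma}. The extra remarks (the degenerate triangle case, the contrast with Kr\"oger's averaged bound) are sensible but not needed; the core argument coincides with the paper's proof.
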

\begin{proof}
Let $D$ be a planar convex domain. Without loss of generality, we may assume that the diameter is horizontal. Let $Q_1$ and $Q_2$ be two boundary points, aligned horizontally, satisfying $\overline{Q_1 Q_2}=\mathrm{diam}(D)$. Let $w^\perp$ denote the vertical thickness (that is, the width of $D$ in the direction orthogonal to that of the diameter). Therefore, $D$ is contained into a horizontal strip of thickness $w^\perp$, that without loss of generality is $\mathbb R \times [0, w^\perp]$. Let $Q_3$ and $Q_4$ be two boundary points lying on the supporting lines $y=0$ and $y=w^\perp$. The quadrilateral $\Omega:= Q_1Q_3 Q_2Q_4 $ is contained into $D$ and satisfies
$$
\mathrm{diam}(\Omega)=\mathrm{diam}(D), \quad |\Omega| = \frac{\mathrm{diam}(D)w^\perp}{2}.
$$
Since quadrilaterals are tiling (or plane-covering) domains,  P\'olya's inequality holds true (we recall that it is one of the
most famous conjectures in spectral geometry for general domains), see \cite{Poltil}  and we obtain
$$
\mu_k(\Omega) \leq \frac{4\pi k}{|\Omega|} =\frac{8\pi k}{\mathrm{diam}(D)w^\perp}.
$$
If this upper bound is below $k^2\pi^2/\mathrm{diam}^2(D)$, in view of Proposition \ref{existencelemma}, we have existence of a minimizer for $I_k(D)$. This is true for 
$$
k\geq  \frac{8 \mathrm{diam}(D)}{\pi w^\perp}.
$$
This concludes the proof.
\end{proof}

\begin{example}\label{exasquare}
For the square, for the disk and for the equilateral triangle, we have $k_0(D)=2$ meaning that the interior problem always has a solution
(for $k\geq 1$).
Indeed, formula \eqref{k0D} provides $k_0\leq 3$ for the square and the equilateral triangle while the inequality \eqref{necandsuff} is directly
verified for $k=2$. For the disk, the fact that the inscribed square has the same diameter allows to conclude directly using the inequalities
for the square.
\end{example}

\begin{remark} 
Excepted for the special case of $k=1$, we have not yet found an example of a domain $D$ for which we have no existence of a minimizer for some index $k\geq 2$.
According to the characterization \eqref{necandsuff}, we should find a domain $D$ for which, for any subdomain $\Omega$, 
we have $\mu_k(\Omega)>k^2\pi^2/\mathrm{diam}^2(D)$.
\end{remark}
\subsection{The exterior problem}
The exterior problem $(EXTP)_k$ shares some common features with the interior problem, but it has also important differences. The first one is
\begin{theorem}\label{theoexisext}
For any convex domain $\omega$, and any $k\geq 1$ the maximization problem $(EXTP)_k$ has a solution.
\end{theorem}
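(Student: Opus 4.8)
The plan is to run the direct method of the calculus of variations; the only genuine difficulty is to prevent a maximizing sequence from escaping to infinity by becoming long and thin.

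First I would record that the supremum $\hat I_k(\omega)$ is finite and strictly positive. Positivity is immediate, since $\omega$ itself is admissible, whence $\hat I_k(\omega)\ge \mu_k(\omega)>0$. For finiteness I would invoke a universal upper bound for Neumann eigenvalues of the form $\mu_k(D)\le C k/|D|$, valid for every bounded planar domain (a Kröger-type inequality; for the particular tiling shapes used elsewhere one could instead use Pólya's inequality as in the proof of Corollary \ref{corexis}). Since every competitor $D$ contains $\omega$, we have $|D|\ge |\omega|$, and therefore $\hat I_k(\omega)\le C k/|\omega|<\infty$.

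The compactness step is where I expect the main obstacle to lie. Fix a maximizing sequence $(D_n)$ of convex domains containing $\omega$, so that $\mu_k(D_n)\to \hat I_k(\omega)$; in particular $\mu_k(D_n)\ge c>0$ for $n$ large. Each $D_n$ contains $\omega$, hence contains a fixed disk of radius $r_0:=\mathrm{inradius}(\omega)>0$. The key geometric observation is that a planar convex set containing a disk of radius $r_0$ and of diameter $\ell$ has area at least of order $r_0\,\ell$ (already the convex hull of the disk and a farthest point has this area). Combining this with the eigenvalue bound above yields $\mu_k(D_n)\le Ck/|D_n|\le C'k/(r_0\,\mathrm{diam}(D_n))$. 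Since $\mu_k(D_n)$ stays bounded below by $c>0$, the diameters $\mathrm{diam}(D_n)$ must remain bounded. As the $D_n$ all contain the fixed set $\omega$ and have uniformly bounded diameter, they lie in a single fixed ball.

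Finally I would conclude by compactness and spectral continuity. By the Blaschke selection theorem the closures $\overline{D_n}$ admit a subsequence converging in the Hausdorff distance to a compact convex set $\overline{D^*}$. Because each $D_n$ contains the fixed open convex set $\omega$, the Hausdorff limit satisfies $\overline\omega\subset \overline{D^*}$; in particular $D^*$ has nonempty interior and is a genuine convex domain containing $\omega$, so that no degeneration to a segment can occur (in contrast with the interior problem). For convex domains with nonempty interior, Hausdorff convergence entails convergence of the characteristic functions and, thanks to the uniform Lipschitz (cone) character of convex sets, is strong enough to pass to the limit in the Neumann spectrum, giving $\mu_k(D_n)\to \mu_k(D^*)$, see \cite[Section 3.7]{HenPie}. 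Hence $\mu_k(D^*)=\hat I_k(\omega)$, and $D^*$ is the desired maximizer.
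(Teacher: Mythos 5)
Your proposal is correct and follows the same skeleton as the paper's proof: positivity of the supremum from $\omega$ itself, a uniform diameter bound on a maximizing sequence, then Blaschke selection plus continuity of the Neumann spectrum under Hausdorff convergence of non-degenerating convex sets. The one place where you genuinely diverge is the mechanism for the diameter bound. The paper invokes the diameter-scaled upper bound $\mu_k(D)\le (2j_{0,1}+(k-1)\pi)^2/\mathrm{diam}^2(D)$ for planar convex domains (\cite{Kr}, \cite{HeMi}), which kills unbounded diameters in one line since $\mu_k(D_n)\ge \mu_k(\omega)>0$ along a maximizing sequence. You instead use the volume-scaled Kr\"oger bound $\mu_k(D)\le Ck/|D|$ together with the elementary geometric fact that a convex set containing the indisk $B(c,r_0)$ of $\omega$ and having diameter $\ell$ contains a triangle of base $2r_0$ and height at least $\ell/2$, hence has area at least $r_0\ell/2$. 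Both routes are valid; the paper's is shorter because the inequality it quotes is already phrased in terms of the diameter, while yours is slightly more self-contained (it needs only the cruder volume bound plus a one-line convexity argument) and adapts more transparently to higher dimensions. Your remark that the obstacle $\omega$ rules out collapse to a segment --- the point that makes the exterior problem easier than the interior one --- is exactly the observation the paper makes as well.
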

\begin{proof}
Since the inclusion constraint prevents a maximizing sequence to collapse to a segment, the only point
that remains to prove is that this maximizing sequence has a diameter uniformly bounded
(then we use the Blaschke selection theorem  and the continuity of Neumann eigenvalues for the Hausdorff
convergence of convex sets).

Now, by the upper bound proved in \cite{Kr}, \cite{HeMi} for any planar convex domain:
$$\mu_k(\Omega)\leq \frac{(2j_{0,1} + (k-1)\pi)^2}{\textrm{diam}^2(\Omega)}$$
(here $j_{0,1}$ is the first zero of the Bessel function $J_0$)
this shows clearly that the diameter of a maximizing sequence cannot go to $+\infty$.
\end{proof}

\begin{remark}
\begin{itemize}
\item Even for $k=1$, the exterior problem has a solution: this is an important difference with the interior problem.
\item Actually, since a maximizer  $D_k^*$ is a better (or equal) competitor than $\omega$,
we have proved the following bound for the diameter of $D_k^*$:
$$\textrm{diam}(D_k^*) \leq \frac{(2j_{0,1} + (k-1)\pi)}{\sqrt{\mu_k(\omega)}}.$$
\item We can also define the notion of {\it $k$-exterior self-domain} : it is a domain $\omega$ that is itself the solution of the
exterior problem for some $k$. We refer to Section \ref{secsquaredisk} where we prove that the disk and the square are $1$-exterior self domains 
(i.e.  for $\mu_1$).
\end{itemize}
\end{remark}

\section{Qualitative properties of optimal domains}\label{secqualitative}
\subsection{Touching points}
In what follows, when no ambiguity may arise, we will denote a generic minimizer of an interior problem 
as $\Omega^*$ and a generic maximizer of an exterior problem as $D^*$, omitting the
    subscript $k$. Among the immediate properties that these optimal shapes must satisfy, let us mention:
\begin{itemize}
\item $\Omega^*$ must touch the boundary of $D$ in at least two points. Indeed, otherwise we can certainly
translate and then expand the domain $\Omega^*$: by $-2$-homogeneity, this operation strictly decreases the eigenvalue;
\item $D^*$ must touch the boundary of $\omega$ in at least two points. Indeed, otherwise we can certainly
translate and then shrink the domain $D^*$: by $-2$-homogeneity, this operation strictly increases the eigenvalue.
\end{itemize}
In many situations we can say more about the number of ``touching points",  by using the following {\it stretching lemma}
showing the domain monotonicity for Neumann eigenvalues under one dimensional stretching. It
can be found for example in \cite[Proposition 8.1]{LS} or \cite[Lemma 6.6]{Siu2}, the proof being straightforward using
the variational characterization of eigenvalues and a change of variable. For the convenience of the reader, we have provided a proof in the appendix (see Lemma \ref{strechL}).
\begin{lemma}[Stretching]\label{lemstretch}
 Let $\Omega$ be a Lipschitz domain in the plane and write $\Omega_t =\{(x, ty) : (x, y) \in \Omega\}$ for $t \geq  1$, so that $\Omega_t$ 
 is a vertically stretched copy of $\Omega$. Then $\mu_k(\Omega) \geq \mu_k(\Omega_t)$ for each $t \geq 1$.
 \end{lemma}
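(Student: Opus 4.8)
The plan is to combine the Courant--Fischer min-max characterization of Neumann eigenvalues with the explicit change of variables induced by the vertical dilation $\Phi:(x,y)\mapsto(x,ty)$. Since we enumerate eigenvalues starting from $\mu_0=0$, recall that
$$\mu_k(\Omega)=\min_{\substack{E\subset H^1(\Omega)\\ \dim E=k+1}}\ \max_{u\in E\setminus\{0\}}\ \frac{\int_\Omega|\nabla u|^2}{\int_\Omega u^2},$$
and analogously for $\Omega_t$. The crucial observation is that $\Phi$ induces a linear isomorphism $H^1(\Omega_t)\to H^1(\Omega)$, $v\mapsto u:=v\circ\Phi$ (with inverse $u\mapsto u\circ\Phi^{-1}$), so that optimal test subspaces can be transported between the two spaces without changing their dimension.

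The first step is the change of variables itself. Writing $(\xi,\eta)=(x,ty)$ for the coordinates on $\Omega_t$, so that $dx\,dy=t^{-1}\,d\xi\,d\eta$, a direct computation gives $\partial_x u=(\partial_\xi v)\circ\Phi$ and $\partial_y u=t\,(\partial_\eta v)\circ\Phi$. Substituting into the Rayleigh quotient on $\Omega$ and cancelling the common Jacobian factor $t^{-1}$ from numerator and denominator, I obtain, for every $v\in H^1(\Omega_t)$ and its image $u=v\circ\Phi$,
$$\frac{\int_\Omega|\nabla u|^2}{\int_\Omega u^2}=\frac{\int_{\Omega_t}(\partial_\xi v)^2+t^2\int_{\Omega_t}(\partial_\eta v)^2}{\int_{\Omega_t}v^2}.$$
Because $t\geq 1$ forces $t^2\geq 1$, comparing with the Rayleigh quotient of $v$ on $\Omega_t$ yields the pointwise bound
$$\frac{\int_{\Omega_t}|\nabla v|^2}{\int_{\Omega_t}v^2}\ \leq\ \frac{\int_\Omega|\nabla u|^2}{\int_\Omega u^2}.$$

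It then remains to feed this into the min-max. I would take an optimal $(k+1)$-dimensional subspace $E^*\subset H^1(\Omega)$ realizing $\mu_k(\Omega)$, and set $\widetilde E:=\{u\circ\Phi^{-1}:u\in E^*\}\subset H^1(\Omega_t)$, which again has dimension $k+1$ since $\Phi^{-1}$ acts by a linear isomorphism. For each $v=u\circ\Phi^{-1}\in\widetilde E$ we have $u=v\circ\Phi$, so the pointwise inequality above applies, and using $\widetilde E$ as a competitor in the variational formula for $\mu_k(\Omega_t)$ gives
$$\mu_k(\Omega_t)\ \leq\ \max_{v\in\widetilde E\setminus\{0\}}\frac{\int_{\Omega_t}|\nabla v|^2}{\int_{\Omega_t}v^2}\ \leq\ \max_{u\in E^*\setminus\{0\}}\frac{\int_\Omega|\nabla u|^2}{\int_\Omega u^2}\ =\ \mu_k(\Omega),$$
which is the desired conclusion. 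There is no genuine analytic obstacle here: the only points requiring care are the bookkeeping of the Jacobian factors in the change of variables and confirming that $\Phi$ gives a Sobolev-space isomorphism preserving dimension (so that $E^*$ transports to a legitimate competitor). The sign of the inequality comes out correctly precisely because $t\geq 1$ weights the $\eta$-derivatives by $t^2\geq 1$ in the transported quotient.
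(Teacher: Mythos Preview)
Your proof is correct and follows essentially the same approach as the paper: a direct change of variables under the dilation $\Phi$ combined with the min--max characterization, the key point being that the transported Rayleigh quotient picks up a factor $t^2\ge 1$ on the vertical derivative. The only cosmetic difference is that the paper carries out the computation for the squeezing case $t\in(0,1)$ and remarks that the stretching case $t\ge 1$ is analogous, whereas you treat $t\ge 1$ directly.
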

 This result allows us to deduce that, for the interior problem associated to the square, minimizers must touch the 4 boundary sides of the 
 box, possibly at corners.  It is enough to argue by contradiction: any convex subset of the square not touching the four
 sides can be translated and stretched horizontally or vertically, keeping the constraints (of convexity and inclusion) satisfied,
 but decreasing the eigenvalue (thanks to the stretching lemma). 

For the exterior problem, the convexity constraint will also play an important role while determining the number of
touching points. For example, if $\omega$ is the square and if we imagine only two touching points, those
can only be two opposite vertexes of the square. One more time, the stretching lemma (considering now $t<1$), shows that we can modify 
an optimal domain $D^*$ so that it touches the square in (at least) one other point: a third vertex.

\subsection{Multiplicity}\label{secmulti}
For the interior problem, we can prove
\begin{theorem}\label{theoBus1}
Let $\Omega^*$ be a minimizer for a given box $D$ and an index $k$, then $\mu_{k}(\Omega^*) < \mu_{k+1}(\Omega^*)$.
\end{theorem}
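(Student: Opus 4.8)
The plan is to argue by contradiction, assuming $\mu_k(\Omega^*)=\mu_{k+1}(\Omega^*)=:\mu$. The first, purely variational, observation is that this forces $\Omega^*$ to be simultaneously a minimizer for the index $k+1$: since $\mu_k(\Omega)\le\mu_{k+1}(\Omega)$ for every admissible competitor, one has $I_k(D)\le I_{k+1}(D)\le\mu_{k+1}(\Omega^*)=\mu_k(\Omega^*)=I_k(D)$, whence $I_k(D)=I_{k+1}(D)$ and $\Omega^*$ minimizes $\mu_{k+1}$ as well. This pins down the combinatorial position of the offending eigenvalue: writing $m\ge 2$ for the multiplicity of $\mu$ and supposing $\mu$ occupies the indices $k-p,\dots,k-p+m-1$, the assumption $\mu_k=\mu_{k+1}$ means that $k$ is \emph{not} the largest such index, i.e. $k$ sits strictly below the top of its cluster ($p+1\le m-1$). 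The favourable case is $p=0$ (that is $\mu_{k-1}<\mu_k=\mu_{k+1}$), where $k$ labels the bottom of the cluster; the case $p\ge 1$ ($\mu_{k-1}=\mu_k$) carries even higher multiplicity and must be treated separately.

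Next I would perturb $\Omega^*$ inside the admissible class and read off the first-order behaviour of $\mu_k$. For a deformation $\Omega_t=(\mathrm{Id}+tV)(\Omega^*)$ the clustered eigenvalues split, to first order, as $\mu+t\,\theta_1(V)\le\dots\le\mu+t\,\theta_m(V)$, where the $\theta_i(V)$ are the eigenvalues of the symmetric matrix $M(V)$ with entries
\[ m_{ij}(V)=\int_{\partial\Omega^*}\big(\nabla_\tau u_i\cdot\nabla_\tau u_j-\mu\,u_i u_j\big)\,(V\cdot n)\,ds, \]
$\{u_i\}$ being an $L^2$-orthonormal basis of the eigenspace and $\nabla_\tau$ the tangential gradient (recall $\partial_n u_i=0$ on $\partial\Omega^*$). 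Because $k$ is the $(p+1)$-th index of the cluster, $\mu_k(\Omega_t)=\mu+t\,\theta_{p+1}(V)+o(t)$, so minimality of $\Omega^*$ requires $\theta_{p+1}(V)\ge 0$ for every admissible $V$; to reach a contradiction it then suffices to exhibit one admissible $V$ with $\theta_{p+1}(V)<0$. The heuristic target is a dilation: for $V(x)=x$ each eigenvalue derivative equals $-2\mu$ by $(-2)$-homogeneity, so $\mathrm{tr}\,M(V)=-2m\mu<0$ and at least one $\theta_i(V)$ is negative. In the case $p=0$ this already lowers $\theta_1=\theta_{p+1}$, so expanding the domain does decrease $\mu_k$.

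The main obstacle, and the heart of the matter, is that a genuine dilation is forbidden by the inclusion $\Omega^*\subset D$: the minimizer touches $\partial D$ (in at least two points, as noted above), so it cannot simply be inflated. A perturbation supported only on the \emph{free} part of $\partial\Omega^*$ — the portion lying strictly inside $D$ — is in turn severely constrained by convexity, since pushing a convex boundary outward typically destroys convexity. The plan is therefore to exploit the degrees of freedom coming from $m\ge 2$: I would localize an outward velocity field on a region of the free boundary where convexity and the inclusion are preserved for small $t$, and use the matrix structure of $M(V)$ (choosing the eigenfunction combination against which $V$ is tested) to force the $(p+1)$-th eigenvalue strictly negative, contradicting optimality. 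Technically, one must also justify the Hadamard formula and the first-order splitting for the merely Lipschitz convex domain $\Omega^*$, and cope with the non-differentiability of $\mu_k$ at a multiple eigenvalue through the directional derivatives encoded in $M(V)$; but the genuinely delicate point is producing an admissible, convexity- and inclusion-preserving deformation that strictly decreases $\mu_k$.
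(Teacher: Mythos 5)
There is a genuine gap: your argument is a programme rather than a proof, and the decisive step is never carried out. Everything hinges on exhibiting an admissible velocity field $V$ (preserving both convexity and the inclusion $\Omega^*\subset D$) for which the $(p+1)$-th eigenvalue $\theta_{p+1}(V)$ of the matrix $M(V)$ is strictly negative. You correctly identify this as ``the heart of the matter'' and then leave it open; the only concrete computation offered is $\mathrm{tr}\,M(V)=-2m\mu<0$ for the global dilation $V(x)=x$, which is inadmissible precisely because $\Omega^*$ touches $\partial D$, and which in any case only guarantees that \emph{some} $\theta_i$ is negative, not $\theta_{p+1}$ unless $p=0$. Localizing an outward perturbation on the free boundary while keeping convexity is exactly the kind of obstruction that makes this route hard (a convex boundary cannot be pushed outward locally near a strictly convex point without losing convexity on the sides of the bump), and on top of that the Hadamard formula and the first-order splitting of a multiple Neumann eigenvalue on a merely Lipschitz convex domain would each need justification. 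None of these issues is resolved, so the contradiction is never reached.

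The paper's proof goes in the opposite direction and is far more elementary: instead of inflating $\Omega^*$, it \emph{removes} a small piece. One cuts off a tiny convex cap or corner triangle $\Omega_2=T$, writes $\Omega^*=\Omega_1\cup\Omega_2$, and applies the generalized Buser Lemma (Lemma~\ref{BuserLemma}) with the decomposition $k+1=k+1$ to get
\[
\mu_k(\Omega^*)=\mu_{k+1}(\Omega^*)\ \ge\ \min\{\mu_k(\Omega_1),\mu_1(\Omega_2)\}.
\]
By Payne--Weinberger, $\mu_1(\Omega_2)>\pi^2/\mathrm{diam}^2(\Omega_2)$ is huge when $T$ is small, so the minimum is $\mu_k(\Omega_1)$; the equality case of the lemma is excluded because $\mu_k(\Omega_1)\neq\mu_1(T)$ for $T$ small, giving the strict inequality $\mu_k(\Omega^*)>\mu_k(\Omega_1)$ with $\Omega_1\subset D$ convex --- contradicting minimality. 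No shape derivative, no regularity of the eigenvalue branch, and no outward perturbation is needed. If you want to salvage your approach you would have to construct the admissible deformation explicitly; as written, the proof does not close.
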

\begin{proof}
Assume, for a contradiction, that $\mu_{k}(\Omega^*) = \mu_{k+1}(\Omega^*)$. 
Let us now cut a very small part of $\Omega^*$
(for example a small triangle near a vertex if $\Omega^*$ is (partly) a polygon or a small cap near a strictly convex part. Let us denote by $\Omega_2=T$ this small part and by $\Omega_1$ its complement into $\Omega^*$.
By Lemma \ref{BuserLemma} applied to this decomposition and $k+1$, we have
$$\mu_{k}(\Omega^*) = \mu_{k+1}(\Omega^*) \geq \min\{\mu_k(\Omega_1),\mu_1(\Omega_2)\}.$$
But since we have chosen $\Omega_2$ to be very small (i.e. with a very small diameter), the classical Payne-Weinberger
inequality $\mu_1(\Omega_2) > \pi^2/\textrm{diam}^2(\Omega_2)$ shows that $\mu_1(\Omega_2)$ is very
large, so the minimum in the previous inequality is $\mu_k(\Omega_1)$. Moreover, for $T$ very small, we must have $\mu_k(\Omega_1)\not = \mu_1(T)$. This observation rules out the equality case in the statement of Lemma \ref{BuserLemma}, which finally yields 
$$\mu_{k}(\Omega^*) > \mu_k(\Omega_1),$$
 contradicting the minimality of $\Omega^*$.
\end{proof}
A consequence of this theorem is that we can now easily prove that the disk or the square
are not (interior) self-domains when $k$ is such that $\mu_k=\mu_{k+1}$. We will come back to these examples in Section 
\ref{secsquaredisk}.

\medskip
Now, for the exterior problem, we have a similar property, but we need to add some assumptions on the optimal domain.
\begin{theorem}\label{doubmax}
Let $D^*$ be a maximizer for the exterior problem for a given obstacle $\omega$ and an integer $k$. Assume that
\begin{itemize}
\item either the boundary of $D^*$ contains a strictly convex part
\item or $D^*$ is a polygon with at least one side having a length $\ell$ satisfying $\ell < 2j_{0,1}/\sqrt{\mu_k(D^*)}$
\end{itemize}
then $\mu_{k-1}(D^*) < \mu_k(D^*)$.
\end{theorem}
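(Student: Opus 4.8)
The plan is to mirror the proof of Theorem \ref{theoBus1}, but with the operation reversed: there a minimizer was shown to be non-degenerate by \emph{cutting off} a small piece, whereas here I will show that a maximizer is non-degenerate by \emph{gluing on} a small piece. Assume, for contradiction, that $\mu_{k-1}(D^*)=\mu_k(D^*)=:\mu$. Note that $k\ge 2$ is the only interesting case, since for $k=1$ the inequality $\mu_0(D^*)=0<\mu_1(D^*)$ holds automatically; in particular $\mu=\mu_{k-1}(D^*)\ge\mu_1(D^*)>0$. The goal is to build a convex domain $\tilde D\supsetneq D^*$ (which then automatically satisfies $\tilde D\supset\omega$, hence is admissible for $(EXTP)_k$) with $\mu_k(\tilde D)>\mu=\mu_k(D^*)$, contradicting the maximality of $D^*$.

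The engine producing the strict increase is the Generalized Buser Lemma \ref{BuserLemma}. Writing $\tilde D=D^*\cup T$, the pair $\{D^*,T\}$ is a $2$-partition of $\tilde D$, and the decomposition $k=(k-1)+1$ yields
$$\mu_k(\tilde D)\ge \min\{\mu_{k-1}(D^*),\,\mu_1(T)\}.$$
The entire construction is arranged so that $T$ is attached along a \emph{flat} interface (to keep $\tilde D$ convex) and so that $\mu_1(T)>\mu$. Granting this, the right-hand side equals $\mu_{k-1}(D^*)=\mu$, and moreover the equality case of Lemma \ref{BuserLemma} is \emph{excluded}: equality would force the restriction to $T$ of a $\mu_k(\tilde D)$-eigenfunction to be a Neumann eigenfunction of $T$ with eigenvalue $\mu_k(\tilde D)=\mu<\mu_1(T)$, which is impossible (here $\mu>0$ is used, so it cannot be the trivial constant mode). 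Therefore the inequality is strict, $\mu_k(\tilde D)>\mu$, which is the sought contradiction. This is exactly the dual of Theorem \ref{theoBus1}, with ``cut a small part, keep the complement'' replaced by ``glue a small part, keep the union''.

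It remains to construct $T$ in each of the two cases, and this is where the hypotheses are used. If $\partial D^*$ contains a strictly convex part, choose such a point $P$ and a point $Q$ just outside $D^*$ along the outer normal at $P$, and set $\tilde D=\mathrm{conv}(D^*\cup\{Q\})$, which is convex, with $T=\tilde D\setminus\overline{D^*}$. As $Q\to P$ the two tangency points from $Q$ slide to $P$, so $T$ is a thin, nearly triangular sliver whose diameter tends to $0$ while its eccentricity stays controlled; hence $\mu_1(T)\to+\infty$ by scaling, and in particular $\mu_1(T)>\mu$ once $Q$ is close enough to $P$. No length condition is needed in this case. In the polygonal case I would instead glue a convex piece onto a side $[AB]$ of length $\ell$ (for instance by pushing the side outward): the union remains convex, and $T$ is a convex domain having $[AB]$ as a boundary edge, so that $\mathrm{diam}(T)\ge\ell$. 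I would then choose the shape of $T$ so that $\mu_1(T)$ is as large as the sharp upper bound $\big(2j_{0,1}/\ell\big)^2$ for the first nonzero Neumann eigenvalue of a convex domain of diameter $\ell$ (the $k=1$ instance of the bound used in Theorem \ref{theoexisext}, see \cite{Kr,HeMi}) permits. Since the hypothesis $\ell<2j_{0,1}/\sqrt{\mu_k(D^*)}$ is precisely the inequality $\big(2j_{0,1}/\ell\big)^2>\mu_k(D^*)=\mu$, this leaves room to arrange $\mu_1(T)>\mu$, and the Buser argument above then concludes.

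The main obstacle I anticipate lies in the \emph{quantitative} polygonal construction: I must exhibit a concrete convex piece $T$, compatible with convexity at the endpoints $A$ and $B$ (where the interior angles of the polygon restrict how far the new boundary may turn), whose first Neumann eigenvalue exceeds $\mu$ while its diameter stays essentially equal to $\ell$. This amounts to (nearly) saturating the diameter bound $\mu_1(T)\le\big(2j_{0,1}/\mathrm{diam}(T)\big)^2$ by an admissible attachable domain, and it is exactly this sharp spectral input that accounts for the constant $2j_{0,1}$ in the statement. By contrast, in the strictly convex case the only delicate point is to confirm rigorously that the shrinking sliver, although \emph{not} convex, still satisfies $\mu_1(T)\to+\infty$; this follows from the homothetic degeneration of $T$ to a point together with the fact that its normalized limiting profile is a fixed Lipschitz domain with positive first Neumann eigenvalue.
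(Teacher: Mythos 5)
Your overall strategy is exactly the paper's: assume $\mu_{k-1}(D^*)=\mu_k(D^*)=\mu$ for contradiction, glue a small piece $T$ onto $D^*$ keeping convexity, apply Lemma \ref{BuserLemma} with the splitting $k=(k-1)+1$ to get $\mu_k(D^*\cup T)\ge\min\{\mu_{k-1}(D^*),\mu_1(T)\}$, and use the equality case together with $\mu_1(T)>\mu$ to make the inequality strict. That skeleton is correct. The two points you flag as ``delicate'' are, however, precisely where the paper has to do real work, and your proposed resolutions are not adequate as stated.

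First, in the strictly convex case your justification that $\mu_1(T)\to+\infty$ is wrong in its details: the sliver $T=\mathrm{conv}(D^*\cup\{Q\})\setminus\overline{D^*}$ has two \emph{cusps} at the tangency points, so it is not a Lipschitz domain, and it does not degenerate homothetically to ``a fixed Lipschitz limiting profile'' --- for a smooth boundary with positive curvature the tangency points sit at distance of order $\sqrt{\varepsilon}$ from $P$ while the height of the sliver is of order $\varepsilon=|PQ|$, so the aspect ratio blows up and no fixed rescaled profile exists. One also cannot invoke Payne--Weinberger since $T$ is not convex. The paper handles this by (i) observing that Buser's lemma still applies when $\mu_1(T)$ is merely defined as the infimum of the Rayleigh quotient over $H^1(T)$ functions orthogonal to constants, and (ii) proving an explicit Poincar\'e inequality for such cusped regions (Appendix~C), giving $\mu_1(T)\ge c/\max\{(a+b)^2,c^2\}$ up to the Lipschitz constant of the boundary graph; this is what makes $\mu_1(T)$ large. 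Second, in the polygonal case you correctly identify that you need an attachable convex piece with $\mu_1(T)$ close to the diameter bound $(2j_{0,1}/\ell)^2$, but you leave the construction as an ``anticipated obstacle.'' The paper supplies it: the thin \emph{isosceles triangle} erected on the side of length $\ell$, whose first nonzero Neumann eigenvalue tends to $4j_{0,1}^2/\ell^2$ as its height tends to zero (the collapsing family of \cite{HeMi}); convexity of the union is automatic for small enough height since the interior angles of a convex polygon are strictly less than $\pi$. With these two inputs your argument closes; without them it has genuine gaps at exactly the steps that justify the hypotheses in the statement.
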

\begin{proof}
As in the proof of the previous theorem, we start by assuming $\mu_{k-1}(D^*) = \mu_k(D^*)$ for a contradiction.
Let us first assume that the boundary of $D^*$ contains a strictly convex part (it is not important whether
this part is common with the boundary of $\omega$ or not). By strictly convex, here, we mean that we are
able to add to $D^*$ a very small part preserving convexity. This is not possible for example for a polygon
where the convexity constraint will lead us to involve two or three consecutive vertexes.
\begin{center}
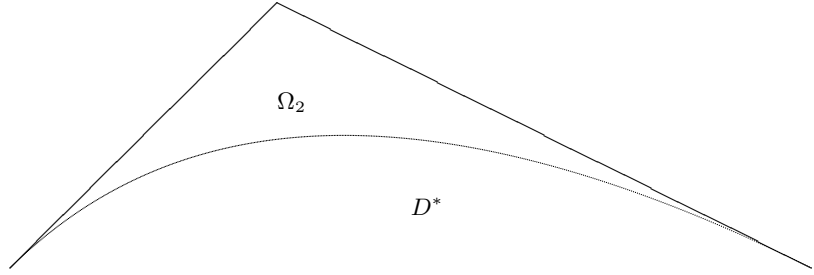
\begin{figure}[h]
\begin{picture}(280,120)(0,0)
\qbezier(0,0)(100,100)(300,0)
\put (0,0){\line(1,1){100}}
\put (100,100){\line(2,-1){200}}
\put (100,60){$\Omega_2$}
\put (150,20){$D^*$}
\end{picture}
\caption{A small part $\Omega_2$ added on a strictly convex part.}\label{fig-cusps}
\end{figure}
\end{center}

Then, by applying the Generalised Buser Lemma \ref{BuserLemma} to $\Omega_1=D^*$, $\Omega_2$ the small part that we have added (see Fig. \ref{fig-cusps}), and $D=\Omega_1\cup \Omega_2$, we have
$$
\mu_k(D) \geq \min(\mu_{k-1}(D^*),\mu_1(\Omega_2)).
$$
Here, let us remark that $\Omega_2$ has two cusps and therefore, it is not a Lipschitz domain. Nevertheless, we can define
$\mu_1(\Omega_2)$ as the infimum of the usual Rayleigh quotient (among functions in $H^1(\Omega_2)$ orthogonal to constants) and one can check that Buser Lemma also applies with this definition (see the proof of the Lemma in Appendix A).
Moreover, we can prove a Poincar\'e inequality for $\mu_1(\Omega_2)$ showing that $\mu_1(\Omega_2)$ is large when $\Omega_2$
is small (i.e. has small diameter), see Appendix C for more details (note that we cannot use here Payne-Weinberger lower 
bound involving the diameter
since $\Omega_2$ is not convex). Therefore, we deduce 
$$\mu_k(D) > \min(\mu_{k-1}(D^*),\mu_1(\Omega_2))= \mu_{k-1}(D^*)=\mu_{k}(D^*)$$
being a contradiction with the maximality of $D^*$.

Now, if the boundary of $D^*$ has nowhere a strictly convex part, it should be a polygon.
We can  do the same construction as before, by adding to the side of length $\ell$  a small isosceles triangle: the one whose first eigenvalue will converge to $4j_{0,1}^2\ell^2$ when the height goes to zero (see \cite{HeMi}).
If we have  $4j_{0,1}^2\ell^2 > \mu_k(D^*)$ as assumed, we can conclude exactly in the same way. This allows us to get the result for polygons
with small enough sides.
\end{proof}

\begin{corollary}\label{coromu1mu2}
Let $D^*$ be a maximizer for the exterior problem for a given obstacle $\omega$ and $k=2$. 
Then $\mu_{1}(D^*) < \mu_2(D^*)$.
\end{corollary}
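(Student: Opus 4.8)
The plan is to obtain the corollary as a direct consequence of Theorem \ref{doubmax} applied with $k=2$: it suffices to check that every maximizer $D^*$ of the exterior problem for $k=2$ falls into (at least) one of the two alternatives listed there, since both conclusions give $\mu_1(D^*)<\mu_2(D^*)$. The first alternative is essentially free: if $\partial D^*$ contains a strictly convex arc, there is nothing more to prove. Hence the whole issue is the remaining case, in which $\partial D^*$ has no strictly convex part and $D^*$ is therefore a convex polygon; for such a $D^*$ I must exhibit one side of length $\ell<2j_{0,1}/\sqrt{\mu_2(D^*)}$.

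To treat the polygonal case I would first convert this target into a purely geometric inequality. Using the upper bound recalled in the proof of Theorem \ref{theoexisext}, namely
$$\mu_2(D^*)\le \frac{(2j_{0,1}+\pi)^2}{\mathrm{diam}^2(D^*)},$$
the desired condition $\ell<2j_{0,1}/\sqrt{\mu_2(D^*)}$ is implied by
$$\ell<\frac{2j_{0,1}}{2j_{0,1}+\pi}\,\mathrm{diam}(D^*)\approx 0.605\,\mathrm{diam}(D^*).$$
Thus it is enough to produce a side shorter than roughly $0.605\,\mathrm{diam}(D^*)$. Here I would invoke the convex-geometry bound $\mathrm{Per}(D^*)\le \pi\,\mathrm{diam}(D^*)$ (Cauchy's formula $\mathrm{Per}=\int_0^\pi w(\theta)\,d\theta$ together with $w(\theta)\le\mathrm{diam}(D^*)$): if all $m$ sides had length at least $0.605\,\mathrm{diam}(D^*)$, then $0.605\,m\,\mathrm{diam}(D^*)\le \pi\,\mathrm{diam}(D^*)$, forcing $m\le 5$. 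This reduces the problem to convex polygons with at most five sides.

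The main obstacle is precisely these few-sided, nearly regular polygons --- most sharply the (near-)equilateral triangle and the regular pentagon --- for which the shortest side can slightly exceed $0.605\,\mathrm{diam}(D^*)$, so the crude diameter bound on $\mu_2$ is too lossy to conclude. The way around it is to abandon the diameter upper bound and work with the genuine value of $\mu_2(D^*)$, which is strictly smaller and hence raises the threshold $2j_{0,1}/\sqrt{\mu_2(D^*)}$ above the side length. The equilateral triangle of side $a$ illustrates the mechanism cleanly: there $\mu_2=16\pi^2/(9a^2)$, so that $2j_{0,1}/\sqrt{\mu_2}=\tfrac{3 j_{0,1}}{2\pi}\,a\approx 1.148\,a$, which already exceeds every side $a=\mathrm{diam}(D^*)$. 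I expect the finitely many remaining configurations (triangles, quadrilaterals and pentagons with all sides long) to be dispatched in the same spirit, by comparing the actual $\mu_2(D^*)$ with $4j_{0,1}^2/s^2$ for the shortest side $s$, either through explicit eigenvalue estimates for such polygons or by reusing the thin-isosceles-triangle test from the proof of Theorem \ref{doubmax}.
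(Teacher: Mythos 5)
Your overall strategy --- reduce to Theorem \ref{doubmax} and handle the polygonal case by exhibiting a short enough side --- is the right one, but the way you try to produce that side leaves a genuine gap. By estimating $\mu_2(D^*)$ from above via the diameter bound $(2j_{0,1}+\pi)^2/\mathrm{diam}^2(D^*)$ you are forced to find a side shorter than about $0.605\,\mathrm{diam}(D^*)$, and after the perimeter reduction you are left not with ``finitely many remaining configurations'' but with a continuum of triangles, quadrilaterals and pentagons all of whose sides exceed that threshold. Your equilateral-triangle computation shows the mechanism can work in one instance, but you give no uniform argument covering all such polygons, and it is not clear one exists along these lines; as stated, the proof is incomplete.

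The step you are missing is to exploit the contradiction hypothesis itself. Assume $\mu_1(D^*)=\mu_2(D^*)$. Cheng's inequality (valid for every planar convex domain, see \cite{Cheng,HeMi,Kr}) gives the strict bound $\mathrm{diam}^2(D^*)\,\mu_1(D^*)<(2j_{0,1})^2$, i.e.
$$\mathrm{diam}(D^*)<\frac{2j_{0,1}}{\sqrt{\mu_1(D^*)}}=\frac{2j_{0,1}}{\sqrt{\mu_2(D^*)}},$$
where the last equality is exactly the assumed degeneracy. Since every side of the polygon has length $\ell\le\mathrm{diam}(D^*)$, \emph{every} side satisfies the hypothesis $\ell<2j_{0,1}/\sqrt{\mu_2(D^*)}$ of Theorem \ref{doubmax}, and the contradiction follows at once. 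In other words, the point is not to bound $\mu_2$ by the diameter (which is lossy by the extra $+\pi$ in the numerator) but to bound $\mu_1$ by the diameter via Cheng and then use $\mu_1=\mu_2$ to transfer that bound; this removes the case analysis entirely.
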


\begin{proof}
Let us assume, for a contradiction, that $\mu_{1}(D^*) = \mu_2(D^*)$.
If the boundary of $D^*$ contains a strictly convex part, this is immediately ruled out by Theorem \ref{doubmax}. Therefore, it remains
to consider the case where $D^*$ is a polygon and see if we can apply the second assumption of the theorem in that case.
Let $\ell$ be the length of any side of this polygon: by definition we have $\ell \leq \mathrm{diam}(D^*)$. Moreover, Cheng's inequality,
see \cite{Cheng, HeMi, Kr}, ensures that $\mu_1(D^*)<(2j_{0,1})^2/\mathrm{diam}^2(D^*)$ , therefore if we had 
$\mu_1(D^*)=\mu_2(D^*)$, then it would follow that
$$\ell \leq \mathrm{diam}(D^*) < \frac{2j_{0,1}}{\sqrt{\mu_1(D^*)}}= \frac{2j_{0,1}}{\sqrt{\mu_2(D^*)}}$$
allowing to apply Theorem \ref{doubmax}: a contradiction.
\end{proof}

\section{The square and the disk}\label{secsquaredisk}
In this section, we will apply the previous results (for small values of $k$) when the box or the obstacle are the unit disk or the unit square.
First of all, we recall that we have proved that, in these two cases, we have existence of a minimizer for the interior problem for any $k\geq 2$,
thanks to Corollary \ref{corexis}, see Example \ref{exasquare}. 

\medskip
An interesting question is to know whether the disk or the square could
be or not be {\it self-domains} for both problems. Tables \ref{table1} and \ref{table2} sum up what can be said in view of our previous result,
notably Theorems \ref{theoBus1} and \ref{doubmax}. The first table is for the interior problem, the second for the exterior one,
YES or NO mean that
the disk or the square are or are not self-domains for that value of $k$, {\it probably} that they should be but we cannot prove it.

\begin{center}
\begin{table}[h]
\begin{tabular}{|c|c|c|c|c|} \hline
Domain & $k=1$  & $k=2$  & $k=3$  & $k=4$  \\ \hline
Disk & no existence & probably & NO & probably  \\ \hline
Square & no existence& probably & probably & NO \\ \hline
\end{tabular}
\caption{The interior problem: are disk and square self-domains?}\label{table1}
\end{table}
\begin{table}[h]
\begin{tabular}{|c|c|c|c|c|} \hline
Domain & $k=1$  & $k=2$ & $k=3$ & $k=4$ \\ \hline
Disk & YES & NO & probably & NO \\ \hline
Square & YES & NO & probably & probably \\ \hline
\end{tabular}
\caption{The exterior problem: are disk and square self-domains?}\label{table2}
\end{table}
\end{center}

Explanations: for the disk, we have $\mu_1=\mu_2<\mu_3=\mu_4$, therefore it cannot be optimal for the interior problem for $k=3$
according to Theorem \ref{theoBus1}. Moreover, since the disk is strictly convex, Theorem \ref{doubmax} applies and the disk cannot
be optimal for the exterior problem for $k=2$ and $k=4$. For the square, we have $\mu_1=\mu_2<\mu_3<\mu_4=\mu_5$
therefore it cannot be optimal for the interior problem for $k=4$
according to Theorem \ref{theoBus1}. Moreover, by Corollary \ref{coromu1mu2} the square cannot
be optimal for the exterior problem for $k=2$.
It remains to consider the case $k=1$:
\begin{proposition}\label{propsquare}
The disk and the square are self-domains for the exterior problem and $k=1$.
\end{proposition}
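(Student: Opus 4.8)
The statement that the disk (resp.\ the square) is a $1$-exterior self-domain means precisely that $\mu_1(D)\le \mu_1(\omega)$ for every convex $D$ containing $\omega$, the reverse inequality being automatic since $\omega$ is itself admissible. Thus for each of the two obstacles the plan is to bound $\mu_1$ from above on an arbitrary convex superset $D$ and recover equality exactly when $D=\omega$.

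For the disk $\omega=B$ (unit disk, $\mu_1(B)=(j_{1,1}')^2$) I would argue purely by the Szeg\H{o}--Weinberger inequality together with the monotonicity of the area under inclusion. If $D\supseteq B$ is convex then $|D|\ge |B|=\pi$, so the disk $B_D$ with $|B_D|=|D|$ has radius $\rho=\sqrt{|D|/\pi}\ge 1$. Szeg\H{o}--Weinberger gives $\mu_1(D)\le \mu_1(B_D)=(j_{1,1}')^2/\rho^2\le (j_{1,1}')^2=\mu_1(B)$ by $(-2)$-homogeneity, with equalities exactly when $D=B$. This settles the disk.

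For the square $\omega=Q=[0,1]^2$ (with $\mu_1(Q)=\pi^2$) the Szeg\H{o}--Weinberger bound is too weak (it only yields $\mu_1(D)\le \pi (j_{1,1}')^2 >\pi^2$), and I would instead use explicit one-dimensional test functions. Let $\Phi$ be the clamped cosine, $\Phi(t)=\cos(\pi t)$ on $[0,1]$, $\Phi\equiv 1$ on $(-\infty,0]$ and $\Phi\equiv -1$ on $[1,\infty)$; it satisfies the pointwise identity $\Phi'(t)^2=\pi^2(1-\Phi(t)^2)$. Given a convex $D\supseteq Q$, set $u_1(x,y)=\Phi(x-s_1)$ and $u_2(x,y)=\Phi(y-s_2)$. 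Since $s\mapsto \int_D\Phi(\,\cdot-s)$ is continuous and runs from $-|D|$ to $|D|$, I can choose $s_1$ and $s_2$ --- independently, as each integral depends on a single shift --- so that $\int_D u_1=\int_D u_2=0$; hence both $u_i$ are admissible in the variational characterisation of $\mu_1(D)$. Using $|\nabla u_i|^2=\pi^2(1-u_i^2)$ pointwise, one gets $\mu_1(D)\int_D u_i^2\le \pi^2\int_D(1-u_i^2)$ for $i=1,2$; summing and writing $S:=\int_D(u_1^2+u_2^2)$ yields $\mu_1(D)\,S\le \pi^2(2|D|-S)$, that is
\[
\mu_1(D)\le \pi^2\Big(\frac{2|D|}{S}-1\Big).
\]
Consequently $\mu_1(D)\le \pi^2=\mu_1(Q)$ as soon as $S\ge |D|$.

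The main obstacle is therefore the integral--geometric inequality $\int_D\big(\Phi(x-s_1)^2+\Phi(y-s_2)^2\big)\ge |D|$ for every convex $D\supseteq Q$; equivalently, with $\psi:=\Phi^2-\tfrac12$ (so $\psi=\tfrac12\cos(2\pi t)$ on $[0,1]$ and $\psi\equiv\tfrac12$ outside), the inequality $\int_D[\psi(x-s_1)+\psi(y-s_2)]\ge 0$. For $D=Q$ this holds with equality --- there $u_1=\cos(\pi x)$ is a genuine eigenfunction and $S=|Q|$ --- so the square is the extremal configuration and one expects strict inequality otherwise (which I have checked on sample domains such as the circumscribed disk). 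Establishing it in general is where the inclusion $Q\subseteq D$ and the convexity of $D$ must be exploited; the regime of nearly round domains can alternatively be controlled by Szeg\H{o}--Weinberger, so the delicate case is that of convex domains close to $Q$ itself.
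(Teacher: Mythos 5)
Your treatment of the disk is correct and is essentially the paper's own argument: the Szeg\H{o}--Weinberger inequality together with the strict monotonicity of area under inclusion gives $\mu_1(D)<\mu_1(B)$ for every convex $D\supsetneq B$, so nothing to add there. The square is where the problem lies, and you acknowledge it yourself: your entire argument reduces to the inequality $S=\int_D\bigl(\Phi(x-s_1)^2+\Phi(y-s_2)^2\bigr)\,\mathrm{d}x\,\mathrm{d}y\ge |D|$, equivalently $\int_D[\psi(x-s_1)+\psi(y-s_2)]\ge 0$ with $\psi=\Phi^2-\tfrac12$, and this is precisely the step you do not prove. It is not a routine verification. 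Inside the strip $\{s_1\le x\le s_1+1\}$ the integrand $\tfrac12\cos(2\pi(x-s_1))$ is weighted by the vertical cross-section length $\ell(x)$ of $D$, which is concave; for a concave weight the integral of $\cos(2\pi t)$ over a period is typically \emph{negative} (e.g. $\int_0^1 t(1-t)\cos(2\pi t)\,\mathrm{d}t=-1/(2\pi^2)$), so one must show that the contribution of $D$ outside the two strips, where $\psi\equiv\tfrac12$, always compensates --- and this with the shifts $s_1,s_2$ pinned down by the mean-zero conditions rather than chosen freely. Nothing in your write-up uses $Q\subseteq D$ or convexity to establish this, so as it stands the square half of the proposition is not proved.

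For comparison, the paper closes the square case by a different and complete route: it invokes the inequality $\mu_1(\Omega)\le \pi^2 w_\Omega^2/|\Omega|^2$ (with $w_\Omega$ the minimal width, equality only for rectangles) from the reference \cite{HLL}, and reduces everything to the elementary geometric estimate $|\Omega|\ge w_\Omega$ for convex $\Omega\supseteq Q$. The latter follows by taking boundary points $N,W,S,E$ of $\Omega$ on the four supporting lines parallel to the sides of $Q$, at distances $h_N,h_W,h_S,h_E$ from those sides: on one hand $w_\Omega\le 1+\min(h_N+h_S,\,h_W+h_E)$, and on the other hand convexity forces $\Omega$ to contain four triangles of total area $(h_N+h_W+h_S+h_E)/2$ in addition to $Q$, so $|\Omega|\ge 1+(h_N+h_W+h_S+h_E)/2\ge w_\Omega$, whence $\mu_1(\Omega)\le\pi^2=\mu_1(Q)$. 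If you wish to keep your trial-function strategy you must actually prove the integral--geometric inequality above for the correctly normalised shifts; otherwise the argument should be replaced by an estimate of this width-versus-area type.
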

\begin{proof}
Let us start with the the unit disk $\mathbb{U}$: let $\Omega$ be any convex domain strictly containing the unit disk.  On the one hand, the areas satisfy $|\mathbb{U}| < |\Omega|$. On the other hand, the Szeg\H{o}-Weinberger inequality, see \cite{Wein}, 
provides the inequality $|\Omega|\mu_1(\Omega) \leq |\mathbb{U}|\mu_1(\mathbb{U})$. Therefore
$$|\mathbb{U}|\mu_1(\Omega) < |\Omega|\mu_1(\Omega) \leq |\mathbb{U}|\mu_1(\mathbb{U}),$$
proving $\mu_1(\Omega) < \mu_1(\mathbb{U})$ and the optimality of the disk.

Now, let us look at the square $Q=[0,1]\times [0,1]$: let $\Omega$ be any convex domain strictly containing the unit square and let us denote by
$N,W,S,E$ points on the boundary of $\Omega$ that are respectively at the North, the West, the South and the East (for example $N$ is defined
as a point on the boundary and on the horizontal  supporting line above $Q$...), see Fig. \ref{fig-NSWE}. Two of these points might coincide. 
\begin{figure}[h]
\centering
\includegraphics[scale=0.35]{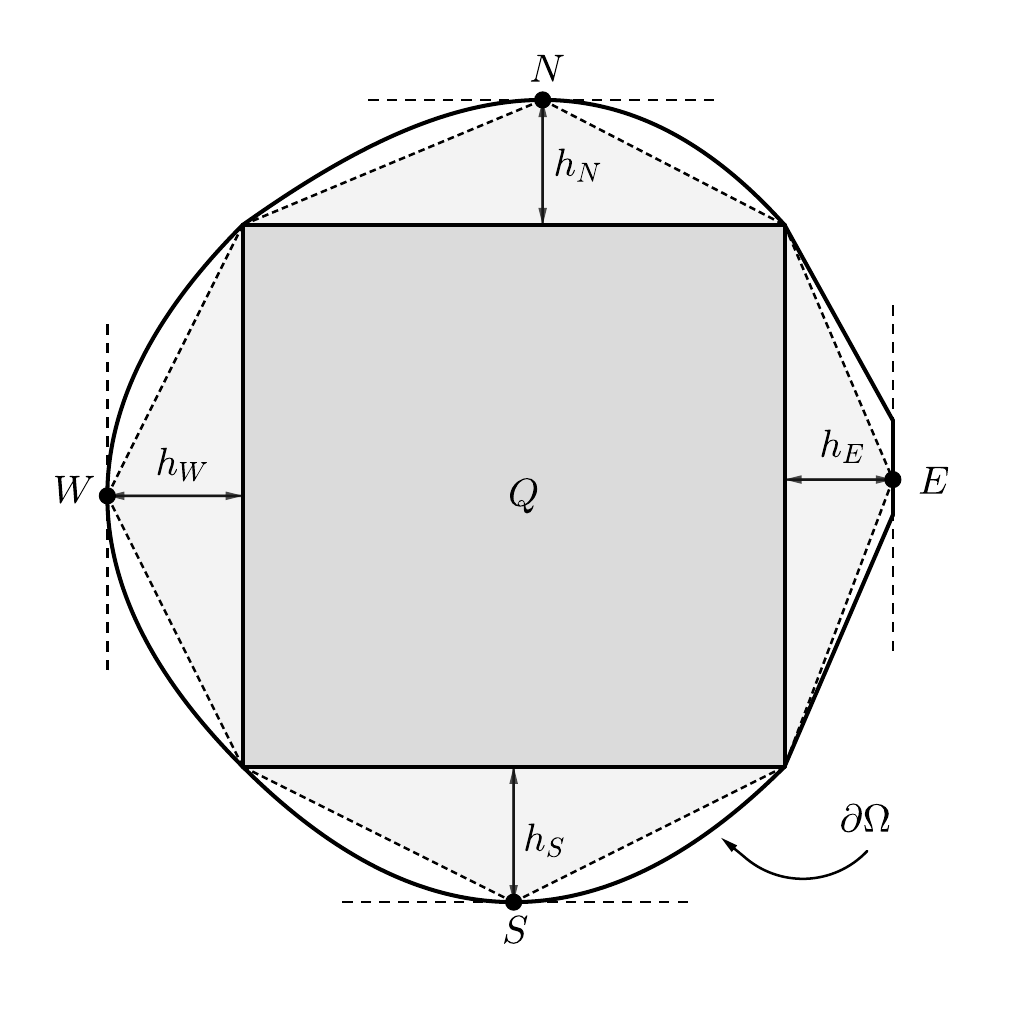}
\caption{Proof of Proposition \ref{propsquare}.}\label{fig-NSWE}
\end{figure}
Let us denote respectively by
$h_N,h_W,h_S,h_E$ the distance between these points and the corresponding side of the square: for example $h_N$ is the difference
between the ordinate of $N$ and $1$. Let us denote by $w_\Omega$ the minimal width of the domain $\Omega$. By definition, we have
$$w_\Omega \leq 1+h_N+h_S,\quad w_\Omega \leq 1+h_W+h_E $$
therefore
\begin{equation}\label{minw}
  w_\Omega \leq \min (1+h_N+h_S,  1+h_W+h_E).
 \end{equation}
 On the other hand, by convexity, the domain $\Omega$ contains the triangles joining each point $N,W,S,E$ to the two corresponding vertexes of
 the square: e.g. for the point $N$ these two vertexes are $(0,1)$ and $(1,1)$. Each such triangle having for area $h_N/2, h_W/2, h_S/2, h_E/2$
 we deduce the following lower bounds for the area of $\Omega$:
 \begin{equation}\label{areamin} 
 |\Omega| \geq 1+ (h_N+h_W + h_S + h_E )/2 \geq w_\Omega,
 \end{equation}
 where we used \eqref{minw} for the second inequality.
 Now, we use the following inequality for $\mu_1$ proved, for any planar domain $\Omega$ (not necessarily convex) in the paper
 \cite{HLL}:
 $$\mu_1(\Omega) \leq \frac{\pi^2 w^2_\Omega}{|\Omega|^2}$$
 where equality holds only for rectangles. Combining, \eqref{minw}, \eqref{areamin} and this last inequality, we have proved that
 $\mu_1(\Omega) \leq \pi^2 = \mu_1(Q)$. This shows that the square is the maximizer. Moreover, it is the only maximizer since for any rectangle
 different from the square, the inequality \eqref{minw} or the inequality \eqref{areamin}  must be strict.
\end{proof}
\begin{remark}
For the equilateral triangle we believe that the same property holds true. Indeed, it would follow from the following conjecture:\\
{\it for any planar convex domain $\Omega$ we have $P^2(\Omega) \mu_1(\Omega) \leq 16\pi^2$ with equality for the square
and the equilateral triangle} due to Laugesen-Polterovich-Siudeja, see the recent paper \cite{HLL} where this conjecture is proved
assuming that $\Omega$ has two axis of symmetry.  Assuming the conjecture is true, let $\Omega$ be a convex domain strictly
containing the equilateral triangle $T$, we have $P(T)<P(\Omega)$ therefore
$$P^2(T)\mu_1(\Omega) < P^2(\Omega)\mu_1(\Omega) \leq P^2(T)\mu_1(T)$$
proving the maximality of the equilateral triangle.
\end{remark}

\medskip
Since the disk (and the square) are not self-domains for $k=2$ for the exterior problem, one can wonder what the optimal domain looks like.
For the disk for example, we would expect some symmetry, thus a rather surprising result is the following:
\begin{proposition}
The optimal domain for the exterior problem with $k=2$ when the obstacle is the disk has not $j$-fold symmetry with $j\geq 3$
\end{proposition}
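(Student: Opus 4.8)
The plan is to argue by contradiction, using Corollary \ref{coromu1mu2} as the only external input. Suppose the maximizer $D^*$ of $(EXTP)_2$ for the unit disk possessed $j$-fold rotational symmetry for some $j\ge 3$, and let $R$ be the rotation by angle $2\pi/j$, so that $R(D^*)=D^*$. The aim is to show that this symmetry forces $\mu_1(D^*)=\mu_2(D^*)$, which contradicts the strict inequality $\mu_1(D^*)<\mu_2(D^*)$ guaranteed by Corollary \ref{coromu1mu2}. Thus everything reduces to the statement that a convex planar domain with $j$-fold symmetry, $j\ge 3$, has a first positive Neumann eigenvalue of multiplicity at least two.

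To establish this multiplicity, I would first set up the representation-theoretic framework. The cyclic group $G=\langle R\rangle\cong\mathbb Z/j\mathbb Z$ acts unitarily on $L^2(D^*)$ by $f\mapsto f\circ R^{-1}$, and since $R$ is an isometry preserving $D^*$ this action commutes with the Neumann Laplacian; hence every eigenspace is a real representation of $G$. Assume, for contradiction, that $\mu_1(D^*)$ is simple, with eigenfunction $u$. Then $\mathbb R u$ is $G$-invariant, so $u\circ R=\varepsilon u$ with $\varepsilon\in\{+1,-1\}$ (the only real one-dimensional representations of a cyclic group), the value $\varepsilon=-1$ being possible only when $j$ is even. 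The decisive comparison is with the two-dimensional space $\mathrm{span}(x,y)$ of linear coordinate functions: under $R$ these transform by the standard planar rotation, which for $j\ge 3$ is an \emph{irreducible} two-dimensional representation of $G$. By Schur's lemma there is no nonzero $G$-equivariant map from this space to the line $\mathbb R u$, so $x$ and $y$ are $L^2(D^*)$-orthogonal to $u$ (and, the centroid being the fixed point of $R$, also to the constants).

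Next I would extract a geometric contradiction from Courant's nodal theorem. The first positive Neumann eigenfunction has exactly two nodal domains, so in the simply connected set $D^*$ the nodal set $N=\{u=0\}$ (which is $R$-invariant, since $u\circ R=\pm u$) is either a single arc with both endpoints on $\partial D^*$ or a single closed loop in the interior. If $N$ is an arc, then $R$ permutes its two endpoints, so $R^2$ fixes each of them; but $R^2$ is a nontrivial rotation for $j\ge 3$ and cannot fix a boundary point distinct from the centre, a contradiction. If $N$ is an interior loop and $\varepsilon=-1$, then $R$ must interchange the two nodal domains, yet it maps the bounded inside of the loop to itself, again a contradiction. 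The one surviving configuration is $\varepsilon=+1$, in which $u$ is $G$-invariant, together with an interior closed nodal loop, i.e.\ a ``radial-type'' first eigenfunction whose nodal line avoids $\partial D^*$.

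The main obstacle is precisely excluding this last configuration, and this is where convexity must enter: for a convex planar domain the nodal line of any first nonzero Neumann eigenfunction reaches the boundary (a Neumann counterpart of the Payne--Melas nodal-line phenomenon), so $N$ cannot be an interior loop and the invariant case is ruled out as well. Once every possibility is eliminated, $\mu_1(D^*)$ cannot be simple, whence $\mu_1(D^*)=\mu_2(D^*)$, contradicting Corollary \ref{coromu1mu2}. I expect the delicate point to be justifying that the nodal line meets the boundary for a \emph{possibly non-smooth} convex maximizer; an alternative, purely variational route would be to prove directly that every $G$-invariant function orthogonal to the constants has Rayleigh quotient strictly larger than $|D^*|/\int_{D^*}x^2$, so that the linear functions, which live in the two-dimensional isotypic component, already lie strictly below any invariant competitor and force the first eigenvalue into that two-dimensional component.
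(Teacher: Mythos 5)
Your proof has exactly the same skeleton as the paper's: reduce the statement to the fact that a planar convex domain with $j$-fold symmetry, $j\ge 3$, must satisfy $\mu_1=\mu_2$, and then invoke Corollary \ref{coromu1mu2} to reach a contradiction. The only divergence is that the paper treats this multiplicity statement as classical and simply cites it (\cite[Lemma 4.1]{AB}), whereas you attempt to reprove it from scratch. Your reproof is essentially sound, and the step you flag as delicate --- excluding an interior closed nodal loop for the first nonconstant Neumann eigenfunction --- needs neither convexity nor any Payne--Melas-type theorem: if the nodal set of $u$ were a loop bounding a region $\omega$ with $\overline{\omega}\subset D^*$, then $u|_\omega$ would be a sign-definite Dirichlet eigenfunction of $\omega$, whence $\mu_1(D^*)=\lambda_1(\omega)\ge\lambda_1(D^*)>\mu_1(D^*)$ by Dirichlet monotonicity and the classical inequality $\mu_1<\lambda_1$ (Friedlander--Filonov), a contradiction. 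Two minor remarks: the representation-theoretic paragraph showing $x,y\perp u$ plays no role in the argument you actually run (it belongs to the ``alternative variational route'' you only sketch) and could be dropped; and the reduction of the nodal set to ``a single arc or a single loop'' deserves a word of justification, since a priori the zero set of an eigenfunction with two nodal domains could also contain non-separating pieces.
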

\begin{proof}
It is a classical property that a domain having $j$-fold symmetry with $j\geq 3$ must satisfy $\mu_1=\mu_2$, see for example
\cite[Lemma 4.1]{AB}. Therefore, applying Corollary \ref{coromu1mu2} we immediately get the conclusion.
\end{proof}
\section{The functional $J_k$}\label{secJk}
\subsection{Bounds for $\inf J_k$}
We recall that we can define, for any convex domain $D$ and for any integer $k$, the ``lack of monotonicity" by the formula
$$
J_k(D):= \frac{I_k(D)}{\mu_k(D)},    
$$
where $I_k(D)$ is defined by
$$
    I_k(D):= \inf\left\{ \mu_k(\Omega) \;:\; 
    \Omega \subset D,\; \Omega \text{ is a convex domain} \right\}.
$$
We are interested in the infimum of $J_k$ among all planar convex domains. We will denote it by
$$M_k=\inf\{J_k(D) : D \mbox { convex}, D\;\subset \mathbb{R}^2\}.$$
More precisely, if we can compute this infimum exactly for
$k=1$; we are just able to give bounds for $M_k$ in the general case.
Let us mention that recently, in \cite{Fre-Ken} P. Freitas and J. Kennedy introduced exactly the same number, but in any dimension $d$,
they denote it by $\alpha_{k,d}$ and then $M_k=\alpha_{k,2}$. In their paper, they also obtain the value of $M_1$ as in our Theorem
\ref{theoM1}, they also obtain the same upper bound for $M_k$ as in our Proposition \ref{prop_upper} but they do not give lower bounds 
for $M_k$ better than the trivial one (obtained by estimating $\mu_k(D)$ from below by $\pi^2/\mathrm{diam}^2(D)$).
In that sense, our Theorem \ref{theolowerMk} based on a new lower bound for any Neumann eigenvalue given in Theorem \ref{theolowermuk}
seems to be a real progress.

A first result, an easy consequence of Theorem \ref{theoI1} is the following:
\begin{theorem}\label{theoM1}
Let $M_k$ be defined as above. Then
$$M_1= \frac{\pi^2}{4(j_{0,1})^2}\simeq 0.427,$$
and the infimum in the definition of $M_1$ is not achieved.
\end{theorem}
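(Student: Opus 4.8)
The plan is to reduce the computation of $M_1$ to the sharp form of Cheng's inequality. By Theorem \ref{theoI1} we know that $I_1(D) = \pi^2/\mathrm{diam}^2(D)$ for every bounded convex domain $D$, so the functional takes the explicit form
$$J_1(D) = \frac{I_1(D)}{\mu_1(D)} = \frac{\pi^2}{\mathrm{diam}^2(D)\,\mu_1(D)}.$$
Consequently, minimizing $J_1$ is the same as maximizing the scale-invariant product $\mathrm{diam}^2(D)\,\mu_1(D)$ over convex domains, and one has
$$M_1 = \frac{\pi^2}{\sup\{\mathrm{diam}^2(D)\,\mu_1(D) : D \text{ convex}\}}.$$
Thus everything follows once we show that this supremum equals $4 j_{0,1}^2$ and is not attained.

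For the lower bound on $M_1$ (equivalently, the upper bound on the product), I would invoke Cheng's inequality for planar convex domains in the strict form already used in the proof of Corollary \ref{coromu1mu2}, namely $\mu_1(D) < (2 j_{0,1})^2/\mathrm{diam}^2(D)$, see \cite{Cheng, HeMi, Kr}. This gives $\mathrm{diam}^2(D)\,\mu_1(D) < 4 j_{0,1}^2$ for every convex $D$, hence $J_1(D) > \pi^2/(4 j_{0,1}^2)$. Since this strict inequality holds for all competitors, it simultaneously yields $M_1 \geq \pi^2/(4 j_{0,1}^2)$ and the non-attainment of the infimum.

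For the matching upper bound on $M_1$, I would exhibit a degenerating minimizing sequence saturating Cheng's bound. The natural candidates are the thin isosceles triangles $T_\varepsilon$ with base the segment $[0,1]\times\{0\}$ and apex at $(1/2,\varepsilon)$ (equivalently, thin rhombi), which collapse onto a segment of length $1$ as $\varepsilon\to 0$, with $\mathrm{diam}(T_\varepsilon)=1$. The key point is the standard thin-domain dimension reduction: writing the cross-sectional half-height as $\varepsilon\,h(x)$ with $h(x)=\min(x,1-x)$, the first nonzero Neumann eigenvalue converges to the first nonzero eigenvalue of the weighted Sturm--Liouville problem
$$-(h u')' = \mu\, h\, u \text{ on } (0,1), \qquad \text{with the natural endpoint conditions.}$$
On $(0,1/2)$, where $h(x)=x$, this is Bessel's equation of order zero, whose regular solution is $u(x)=J_0(\sqrt{\mu}\,x)$; the first antisymmetric mode (odd about $x=1/2$) forces $u(1/2)=0$, i.e. $J_0(\sqrt{\mu}/2)=0$, giving $\mu = 4 j_{0,1}^2$. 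Hence $\mathrm{diam}^2(T_\varepsilon)\,\mu_1(T_\varepsilon)\to 4 j_{0,1}^2$ and $J_1(T_\varepsilon)\to \pi^2/(4 j_{0,1}^2)$, proving $M_1 \leq \pi^2/(4 j_{0,1}^2)$. Combining the two bounds gives $M_1 = \pi^2/(4 j_{0,1}^2)\simeq 0.427$.

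The main obstacle is the rigorous justification of the thin-domain limit in the last step: one must show that the two-dimensional Neumann eigenvalue of the collapsing family $T_\varepsilon$ genuinely converges to the one-dimensional weighted eigenvalue, controlling the behavior near the two vanishing tips where $h\to 0$ and the equation becomes singular. This can be carried out either by a direct two-sided Rayleigh-quotient comparison (using test functions depending only on $x$ and modeled on the limiting eigenfunction for the upper estimate, together with a slicing/averaging argument for the lower estimate) or by appealing to the known sharpness of Cheng's inequality as recorded in \cite{HeMi, Kr}; either route closes the argument.
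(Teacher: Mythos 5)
Your proposal is correct and follows essentially the same route as the paper: reduce via Theorem \ref{theoI1} to maximizing the scale-invariant product $\mathrm{diam}^2(D)\,\mu_1(D)$, bound it strictly by $4j_{0,1}^2$ using Cheng's inequality (whose strictness also gives non-attainment), and saturate the bound with isosceles triangles collapsing onto their base. The only difference is one of detail: the paper simply cites \cite{Cheng}, \cite{HeMi}, \cite{Kr} for the sharpness along that collapsing sequence, whereas you sketch the thin-domain Sturm--Liouville reduction yourself, correctly identifying the first nonzero limit eigenvalue as the antisymmetric Bessel mode with $\mu=4j_{0,1}^2$.
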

\begin{proof}
We have already seen in Theorem \ref{theoI1} that
$$I_1(D):=\min_{\Omega \subset D \text{ , convex}} \mu_1(\Omega)=  \frac{\pi^2}{\mathrm{diam}^2(D)}.$$
Therefore the functional $J_1$ reduces to a simple form that can be bounded from below as follows,
$$J_1(D)= \frac{I_1(D)}{\mu_1(D)}= \frac{\pi^2}{\mathrm{diam}^2(D)\mu_1(D)}> \frac{\pi^2}{4j_{0,1}^2},$$
the last inequality directly comes from \cite{Kr}, \cite{HeMi}, where it has been proved that for all convex domains $D\subset \R^2$ it holds
$$
\mathrm{diam}^2(D)\mu_k(D)\leq (2 j_{0,1}+(k-1)\pi)^2.
$$
Moreover, it is proved in the above-mentioned papers and also in \cite{Cheng} for $\mu_1$ that this inequality is sharp, 
by considering a certain sequence of domains shrinking to a segment. 
For $\mu_1$, this is a sequence of isosceles triangles shrinking to its basis.
This gives the desired result.
\end{proof}

Let us now give an upper bound for $M_k$.
\begin{proposition}\label{prop_upper}
Let $M_k$ be defined as above. Then for all $k$ we have
$$M_k\leq \frac{k^2\pi^2}{(2j_{0,1}+(k-1)\pi)^2}<1.$$
\end{proposition}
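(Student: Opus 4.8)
The plan is to exhibit, for each fixed $k$, an explicit sequence of convex domains $D_n$ along which the ratio $J_k(D_n)$ converges to $k^2\pi^2/(2j_{0,1}+(k-1)\pi)^2$ from above, so that the infimum $M_k$ is bounded above by this value. The natural candidates are exactly the degenerating convex domains that make the upper bound $\mathrm{diam}^2(D)\mu_k(D)\le (2j_{0,1}+(k-1)\pi)^2$ sharp, namely a sequence of convex domains collapsing to a segment, as already used in the proof of Theorem \ref{theoM1} and referenced from \cite{Kr,HeMi}. The strict inequality $k^2\pi^2/(2j_{0,1}+(k-1)\pi)^2<1$ is elementary, since $2j_{0,1}>\pi$ (indeed $j_{0,1}\simeq 2.405>\pi/2$) forces $2j_{0,1}+(k-1)\pi>k\pi$.

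The key steps, in order, are as follows. First I would write
$$
J_k(D)=\frac{I_k(D)}{\mu_k(D)}.
$$
Using Theorem \ref{existencelemma} (or directly the Buser-type lower bound and the thin-rectangle construction), the infimum $I_k(D)$ is controlled from above by the value obtained on thin inscribed rectangles, and in fact for the degenerating sequence one has $I_k(D_n)\le \mu_k(R_n)$ with $\mu_k(R_n)\to k^2\pi^2/\mathrm{diam}^2(D_n)$ from above; more simply, for \emph{any} convex $D$ one has the trivial bound $I_k(D)\le k^2\pi^2/\mathrm{diam}^2(D)$ coming from the inscribed thin rectangle of near-maximal length (this is the content of the Remark following Theorem \ref{existencelemma}). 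Second, I would bound the denominator $\mu_k(D)$ from below along the degenerating sequence using the sharpness of the Kröger--Henrot--Michetti estimate: for the collapsing domains $D_n$ one has $\mathrm{diam}^2(D_n)\mu_k(D_n)\to (2j_{0,1}+(k-1)\pi)^2$. Combining, the diameter cancels and yields
$$
J_k(D_n)=\frac{I_k(D_n)}{\mu_k(D_n)}\le \frac{k^2\pi^2/\mathrm{diam}^2(D_n)}{\mu_k(D_n)}=\frac{k^2\pi^2}{\mathrm{diam}^2(D_n)\mu_k(D_n)}\longrightarrow \frac{k^2\pi^2}{(2j_{0,1}+(k-1)\pi)^2}.
$$
Taking the infimum over all convex domains then gives $M_k\le k^2\pi^2/(2j_{0,1}+(k-1)\pi)^2$.

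The main obstacle is making the numerator estimate and the denominator estimate compatible along \emph{the same} sequence $D_n$: one must verify that the extremal sequence for the Kröger--Henrot--Michetti inequality (which makes $\mathrm{diam}^2\mu_k$ approach its maximum) is a sequence of \emph{convex} domains for which the inscribed thin rectangle simultaneously realizes $I_k(D_n)\le k^2\pi^2/\mathrm{diam}^2(D_n)$. Since the extremizing sequence in \cite{Kr,HeMi} is precisely a sequence of convex domains shrinking to a segment of length equal to their diameter, the inscribed near-diameter rectangle is available and the two estimates align automatically. I would therefore point to the explicit degenerating convex family from those references, note that for such domains $\mathrm{diam}(D_n)$ is asymptotically the segment length while an inscribed thin rectangle of that length gives the required upper bound on $I_k$, and conclude. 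The remaining strict inequality $<1$ I would dispatch in one line from $2j_{0,1}>\pi$.
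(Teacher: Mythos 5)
Your proposal is correct and follows essentially the same route as the paper: both take the collapsing convex family (thin trapezoids) from \cite{Kr,HeMi} that saturates $\mathrm{diam}^2(D)\mu_k(D)\le(2j_{0,1}+(k-1)\pi)^2$, bound $I_k(D)$ from above by the $\mu_k$ of an inscribed thin rectangle of near-diameter length, and let the two approximations go to their limits. The paper merely phrases this with two explicit parameters $\eta$ (for the trapezoid) and $\varepsilon$ (for the rectangle) instead of a single sequence, and it leaves the elementary inequality $2j_{0,1}>\pi$ implicit, which you rightly spell out.
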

\begin{proof}
We use the family of collapsing domains as defined in \cite{HeMi}, normalized with diameter $1$. For each $\eta>0$ there exists one of such $D$ (that is a very thin trapezoid) satisfying
$$\mu_k(D)\geq (2j_{0,1}+(k-1)\pi)^2(1-\eta).$$
Now for this fixed $D$ we consider as $\Omega$ a very thin rectangle  $R_\varepsilon$ of length $(1-\varepsilon)$, and of width so small that it fits into $D$. In that case we have  $\mu_k(R_\varepsilon)=k^2 \frac{\pi^2}{(1-\varepsilon)^2}$ as soon as the width is small enough. Since  $M_k$ and $I_k$ are infima we deduce that 
$$M_k\leq \frac{I_k(D)}{\mu_k(D)}\leq \frac{\mu_k(R_\varepsilon)}{\mu_k(D)}\leq \frac{k^2\pi^2}{(1-\varepsilon)^2 (2j_{0,1}+(k-1)\pi)^2(1-\eta)}.$$
Finally, since $\varepsilon$ and $\eta$ are arbitrary, we get
$$M_k\leq \frac{k^2\pi^2}{ (2j_{0,1}+(k-1)\pi)^2},$$
and the theorem is proved.
\end{proof}
\begin{remark}
We believe that the upper bound presented above is actually the true value of $M_k$. If we can prove that there is no existence
of a minimizer for $M_k$, this will follow by analyzing the behavior of a sequence of collapsing domains as done in \cite{HeMi}.
We present in the next subsection an iterative scheme that could also be used to check this property.
\end{remark}
Now we want to get a lower bound for $M_k$. This requires lower bounds for $\mu_k(\Omega)$ in terms of the diameter of $\Omega$.
Obviously, we have 
$\mu_k(\Omega) \geq \mu_1(\Omega) > \pi^2/\mathrm{diam}^2(\Omega)$ which is the lower bound used in \cite{Fre-Ken}, but we want
to improve it. Here is our result stated in dimension 2 (just after the proof of the theorem, we give the analogous inequality in dimension $d$).
\begin{theorem}\label{theolowermuk}
Let $\Omega$ be a bounded convex domain in the plane. Then we have the following lower bounds:
$$\mu_k(\Omega) \geq \frac{C_k}{\mathrm{diam}^2(\Omega)}.$$
The constant $C_k$ is computable for every $k$, moreover
$$C_2>\pi^2, \quad C_3>C_2,$$
and
$$C_k= \frac{[\sqrt{k}]^2 \pi^2}{2}\quad \forall k \geq 4$$
where $[\sqrt{k}]$ is the integer part (or the floor) of $\sqrt{k}$.
\end{theorem}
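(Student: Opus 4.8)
The plan is to read off the lower bound from the Buser Bound (Corollary \ref{BuserLemma2}), reducing the spectral statement to a purely geometric partition problem. Since $\mu_k(\Omega)\,\mathrm{diam}^2(\Omega)$ is scale invariant, I would first normalize $\mathrm{diam}(\Omega)=1$. Corollary \ref{BuserLemma2} says that whenever $\Omega$ is decomposed into a $k$-partition $\Omega_1,\dots,\Omega_k$ of convex pieces, then $\mu_k(\Omega)\ge \pi^2/\max_i\mathrm{diam}^2(\Omega_i)$. Hence it suffices to partition an arbitrary planar convex domain of diameter $1$ into $k$ convex pieces of small diameter: if every such domain admits a $k$-partition into convex pieces each of diameter $\le\delta_k$, then $\mu_k(\Omega)\ge \pi^2/(\delta_k^2\,\mathrm{diam}^2(\Omega))$, so one may take $C_k=\pi^2/\delta_k^2$. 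This is the quantity that is ``computable for every $k$'', and the whole problem becomes: find a good (not necessarily optimal) value of $\delta_k$.

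\textbf{The case $k\ge 4$.} Here I would use the crudest geometrically clean partition. A convex set of diameter $1$ has width at most $1$ in every direction, hence fits inside a unit square (take the bounding box in two orthogonal directions). Set $m=[\sqrt{k}]$, so that $m^2\le k$, and cut the unit square into an $m\times m$ grid. Intersecting $\Omega$ with the grid cells produces $m^2$ convex pieces, each of diameter at most $\sqrt{2}/m$. Since $m^2\le k$, I then refine this decomposition into exactly $k$ convex pieces by performing $k-m^2$ additional straight cuts; each straight cut splits a convex piece into two convex subpieces and cannot increase diameters, so all $k$ pieces still have diameter $\le \sqrt{2}/m$. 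Corollary \ref{BuserLemma2} then gives
$$\mu_k(\Omega)\ge \frac{\pi^2}{(\sqrt{2}/m)^2}=\frac{m^2\pi^2}{2},$$
which after restoring the scaling is exactly $C_k=[\sqrt{k}]^2\pi^2/2$. For $k\ge 4$ we have $m\ge 2$, so $C_k\ge 2\pi^2>\pi^2$: this is the genuine improvement over the trivial bound $\pi^2$ advertised in the introduction. I would stress that this constant is clean rather than sharp (one could do better with rectangular grids adapted to $\Omega$), which is the intended reading of the equality in the statement.

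\textbf{The small cases $k=2,3$.} These cannot be treated by the grid, since $m=[\sqrt{2}]=[\sqrt{3}]=1$ would only yield $\pi^2/2<\pi^2$. For $k=3$ I would instead invoke a planar Borsuk-type partition result: every set of diameter $1$ can be split into three convex pieces of diameter at most $\sqrt{3}/2$ (the extremal configuration being the Reuleaux triangle / disk). Corollary \ref{BuserLemma2} with this partition gives $C_3=\pi^2/(3/4)=4\pi^2/3>\pi^2$. For $k=2$ the naive two-piece cut is genuinely insufficient: for a round domain such as the disk, any single chord leaves one convex piece of diameter $1$, so the Payne--Weinberger/Buser argument with $k_1=k_2=1$ yields only $\mu_2\ge\pi^2$. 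To obtain the strict bound $C_2>\pi^2$ I would exploit the equality/strictness clause of Lemma \ref{BuserLemma} (as in the proof of Theorem \ref{theoBus1}): equality in the two-piece Buser inequality forces the restriction of an eigenfunction to be an eigenfunction on each piece, which can be excluded for the relevant shapes, upgrading the inequality to a strict one and producing a value $C_2$ strictly between $\pi^2$ and $C_3$, so that $C_2>\pi^2$ and $C_3>C_2$.

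\textbf{Main obstacle.} The difficulty is concentrated in the small indices and in the sharpness bookkeeping. The clean $k\ge 4$ estimate is essentially automatic once the grid/refinement is set up. By contrast, $k=2$ requires more than a diameter-based partition, because the simplest decomposition saturates at $\pi^2$ for fat (round) domains; the strict improvement must come from the equality analysis of the Generalized Buser Lemma. Likewise, justifying that the $C_k$ are ``computable for every $k$'' amounts to solving, for each $k$, the optimization of partitioning the worst-case convex domain into $k$ convex pieces of least possible maximal diameter, a Borsuk-type combinatorial-geometric problem whose exact solution I would only claim for the ranges needed ($k=2,3$, and the grid value for $k\ge4$), leaving the genuinely optimal constant open.
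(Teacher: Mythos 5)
Your $k\ge 4$ argument is essentially the paper's: it also boxes $\Omega$ in a square of side $\mathrm{diam}(\Omega)$, cuts an $N\times N$ grid with $N=[\sqrt{k}]$, and applies the Generalized Buser Lemma with Payne--Weinberger on each cell. The only difference is cosmetic: the paper does not refine the grid into exactly $k$ pieces but simply uses $\mu_k(\Omega)\ge \mu_{N^2}(\Omega)$, which is cleaner than your extra straight cuts. Your $k=3$ argument is a genuinely different and valid route: the Borsuk partition of a diameter-one set into three convex pieces of diameter $\le \sqrt 3/2$ gives $4\pi^2/3\approx 13.16$, comparable to (in fact slightly below) the paper's $C_3\approx 13.6$; it has the advantage of not needing any spectral input beyond Payne--Weinberger.

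The genuine gap is at $k=2$. You correctly identify that no two-piece convex partition can beat diameter $1$ for round domains, but your proposed fix --- the equality/strictness clause of Lemma \ref{BuserLemma} --- cannot deliver what the theorem asserts. That clause yields strict inequality \emph{for each fixed domain}, i.e. $\mu_2(\Omega)\,\mathrm{diam}^2(\Omega)>\pi^2$ for every $\Omega$; but this is already immediate from the strict Payne--Weinberger inequality together with $\mu_2\ge\mu_1$, and it does not exclude a sequence of convex domains along which the product tends to $\pi^2$. The whole content of $C_2>\pi^2$ is the \emph{uniformity} of the gap, and a qualitative non-equality argument gives no uniform constant; your claim that it produces a value ``strictly between $\pi^2$ and $C_3$'' is unsupported, and with it the ordering $C_3>C_2$ also falls. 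The paper's mechanism is quantitative: it splits convex domains into ``flat'' ($w/D\le\rho$) and ``round'' ($w/D>\rho$) classes, handles flat domains by cutting the bounding $D\times w$ rectangle in two (each piece has squared diameter at most $w^2+D^2/4$, giving $\pi^2/(\rho^2+1/4)$), handles round domains by a quantitative improvement of Payne--Weinberger due to Bucur and Amato, $D^2\mu_1\ge \pi^2+C\,w^2/D^2$, and then optimizes the threshold $\rho$ so the two bounds coincide. Some quantitative stability input of this kind (or an equivalent compactness-plus-continuity argument, which the paper avoids) is indispensable for $C_2>\pi^2$ and for $C_3>C_2$; without it your proof of the small cases does not close.
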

For the proof of this theorem, in the case $k\geq 4$, we will need the following elementary geometric lemma. We will denote by
$w(\Omega)$ the width of $\Omega$ in the direction orthogonal to the diameter (i.e. the minimal distance between two supporting planes
that are parallel to a diameter)
\begin{lemma} \label{box}
For any bounded convex domain $\Omega\subset \mathbb{R}^2$, there exists a rectangle of length $\mathrm{diam}(\Omega)$
and width $w(\Omega)$ that contains $\Omega$. In particular, there exists a 
square $Q$ with side length $\mathrm{diam}(\Omega)$ that contains $\Omega$.
\end{lemma}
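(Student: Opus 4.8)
The plan is to realize the bounding rectangle as the intersection of two strips cut out by supporting lines: one pair perpendicular to a diameter, one pair parallel to it.

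First I would fix a segment $[AB]$ realizing the diameter and choose coordinates so that $AB$ is horizontal, say $A=(0,0)$ and $B=(\mathrm{diam}(\Omega),0)$. The key geometric point, which I would establish first, is that the two vertical lines $\{x=0\}$ and $\{x=\mathrm{diam}(\Omega)\}$ through $A$ and $B$ are supporting lines of $\Omega$. I would argue by contradiction: if some point $C\in\bar\Omega$ had $x$-coordinate strictly larger than $\mathrm{diam}(\Omega)$, then $|AC|^2=x_C^2+y_C^2\ge x_C^2>\mathrm{diam}^2(\Omega)=|AB|^2$, contradicting the maximality of the diameter; the symmetric estimate at $A$ rules out points with negative $x$-coordinate. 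Hence $\bar\Omega$ lies in the vertical strip $\{0\le x\le \mathrm{diam}(\Omega)\}$, of width $\mathrm{diam}(\Omega)$.

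Next I would invoke the two supporting lines parallel to the diameter. In the horizontal direction a bounded convex set has exactly two supporting lines (the highest and the lowest), and by the definition of $w(\Omega)$ recalled just above the statement their distance apart equals $w(\Omega)$; thus $\bar\Omega$ is contained in the horizontal strip bounded by them, of width $w(\Omega)$. Intersecting the two strips produces a closed rectangle of length $\mathrm{diam}(\Omega)$ and width $w(\Omega)$ containing $\Omega$, which is the first assertion. For the ``in particular'' statement it then suffices to note that $w(\Omega)\le \mathrm{diam}(\Omega)$: any two points of $\bar\Omega$ lying on the two parallel supporting lines that realize the width are at distance at least $w(\Omega)$, so the diameter dominates the width. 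Consequently the rectangle is contained in a square of side $\mathrm{diam}(\Omega)$ (obtained by extending its shorter pair of sides), and this square contains $\Omega$.

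I do not anticipate any serious obstacle; the only step demanding care is the supporting-line claim, and even that reduces to the one-line distance inequality above. The single subtlety worth flagging is to work throughout with the closure $\bar\Omega$, so that the extremal points sitting on the supporting lines genuinely exist and the width is actually attained.
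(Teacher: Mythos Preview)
Your argument is correct and is essentially the same as the paper's: the paper phrases the first step as $\Omega\subset B(a,R)\cap B(b,R)$ and then passes to the tangent lines of these disks at the endpoints (which are exactly your vertical lines $\{x=0\}$ and $\{x=\mathrm{diam}(\Omega)\}$), while you obtain the same conclusion directly via the distance inequality; the remainder (horizontal supporting lines giving the width, then $w(\Omega)\le\mathrm{diam}(\Omega)$) is identical.
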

\begin{proof} Let $\Omega$ be a convex domain and $R:=\mathrm{diam}(\Omega)<+\infty$ its diameter. Let $a,b \in \overline{\Omega}$ two endpoints of the diameter. Then it is easily seen that, see Fig. \ref{fig-square}:
$$\Omega\subset B(a,R)\cap B(b,R).$$
This means that $\Omega$ is contained in the strip delimited by two parallel lines: the tangent line to $\partial B(a,R)$ at $b$ and the tangent line to $\partial B(b,R)$ at $a$ as in Fig. \ref{fig-square}.
\begin{figure}[h]
\centering
\includegraphics[scale=0.2]{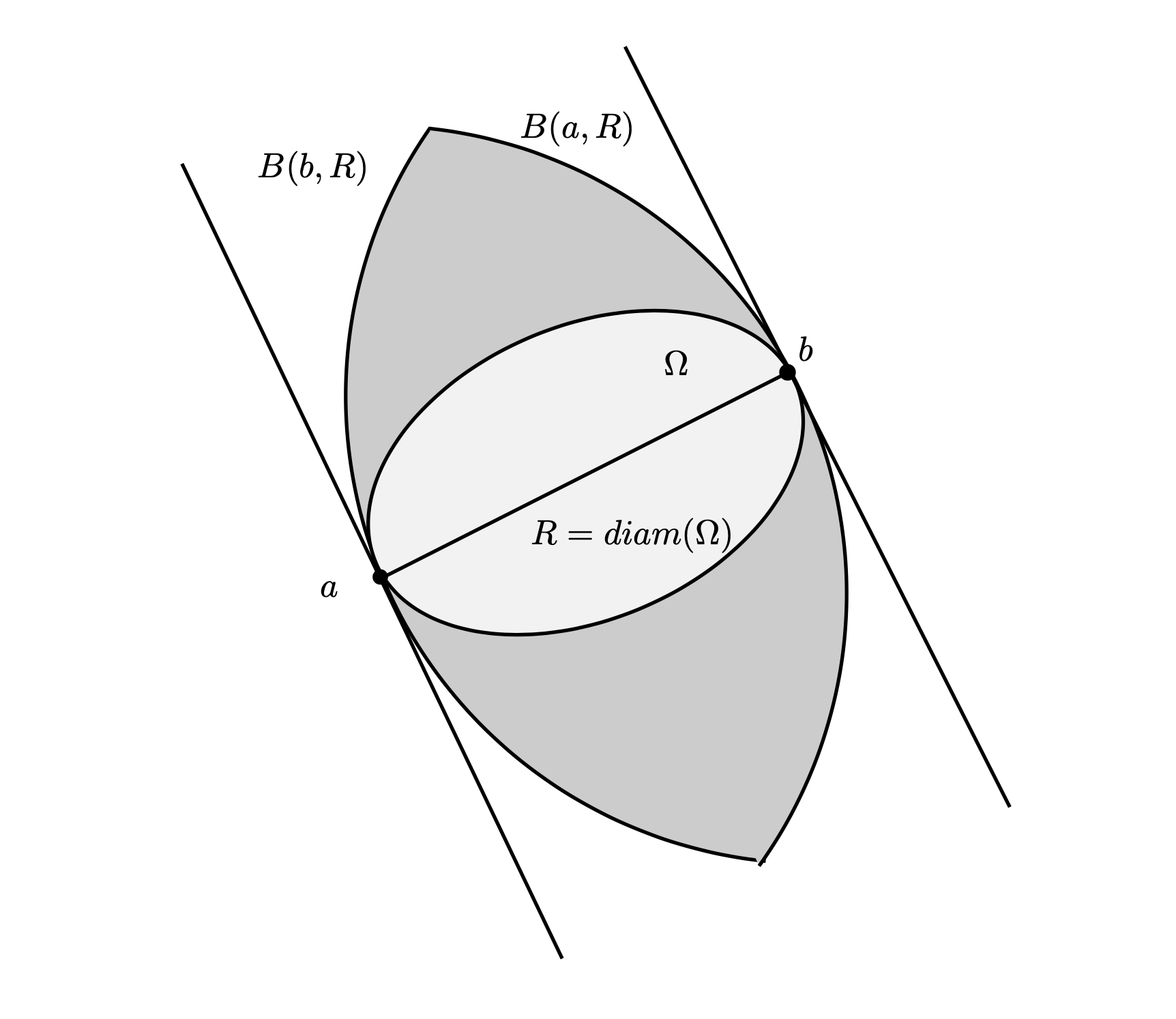}
\caption{Proof of Lemma  \ref{box}.} \label{fig-square}
\label{geometric construction}
\end{figure}
Moreover we see that $\Omega$ is inside the strip delimited by the two supporting lines parallel to the segment joining $a$ and $b$. Then, by definition of the width $w(\Omega)$, our domain is contained in a rectangle of length $\mathrm{diam}(\Omega)$
and width $w(\Omega)$. At last, since the width $w(\Omega)$ must be also smaller than $\mathrm{diam}(\Omega)$. it
follows that $\Omega$ lies inside a square of side $\mathrm{diam}(\Omega)$.
\end{proof}

\begin{remark}
The result of Lemma \ref{box} is optimal: indeed, if $\Omega$ is a Reuleaux triangle then it has constant width equal to its 
    diameter in any direction. 
    As a result, we can construct a square containing $\Omega$ with side of length exactly $\mathrm{diam}(\Omega)$.
\end{remark}

We are now in a position to prove Theorem \ref{theolowermuk}.
\begin{proof}
We start with the case $k\geq 4$. Let us set $N=[\sqrt{k}]$, then $N^2\leq k$.
We start by putting $\Omega$ inside a square $Q$ with side equal to $\mathrm{diam}(\Omega)$, as given by Lemma \ref{box}. Then 
we divide the square $Q$ in a collection $Q_j$ of $N^2$ cubes of side length $\mathrm{diam}(\Omega)/N$. 
Now we consider the partition of $\Omega$ defined by this grid: $\Omega_j=\Omega\cap Q_j$ and we use the Generalized Buser Lemma \ref{BuserLemma}:
$$\mu_{N^2}(\Omega) \geq \min_j \mu_1(\Omega_j) \geq \min_j \left(\pi^2/\mathrm{diam}^2(\Omega_j)\right).$$
But $\mathrm{diam}(\Omega_j) \leq \mathrm{diam}(Q_j)=\sqrt{2} \mathrm{diam}(\Omega)/N$, therefore we finally get
$$\mu_k(\Omega) \geq \mu_{N^2}(\Omega) > \frac{\pi^2 N^2}{2 \mathrm{diam}^2(\Omega)}$$
that is the desired result.

Let us now consider the case $k=2$. Buser's lemma alone cannot work because when we cut a convex domain in two parts, it is possible 
that each part has the same diameter as $\Omega$ itself. This leads us to split the class of convex domains into two sub-classes
$\mathcal{A}_f$ (f for flat) and $\mathcal{A}_r$ (r for round) defined by (we denote the diameter of $\Omega$ by $D(\Omega)$ here)
$$\mathcal{A}_f:=\{\Omega \; \hbox{s.t.}  \; w(\Omega)/D(\Omega) \leq \rho\}, \quad \mathcal{A}_r:=\{\Omega \; \hbox{s.t.} \;  w(\Omega)/D(\Omega) > \rho\}$$
where $\rho\in (0,1)$ is a threshold that we will choose at the end.

If $\Omega \in \mathcal{A}_f$, we consider the rectangle given by Lemma \ref{box} of dimensions $D(\Omega)$ and $w(\Omega)$
and we cut it in two rectangles $R_1,R_2$ along the longer side: their length is now $D(\Omega)/2$. Then we make a partition
of $\Omega$ as $\Omega_1 \cup \Omega_2$ where $\Omega_i=\Omega\cap R_i$ and we use Buser's Lemma \ref{BuserLemma}
that provides, since $\mathrm{diam}^2(\Omega_i) \leq w^2(\Omega) +D^2(\Omega)/4$
$$\mu_2(\Omega) \geq \min(\mu_1(\Omega_1),\mu_1(\Omega_2)) > \frac{\pi^2}{w^2(\Omega) +D^2(\Omega)/4}.$$
Using the property defining $\mathcal{A}_f$, it follows that 
\begin{equation}\label{minAf}
D^2(\Omega) \mu_2(\Omega) > \frac{\pi^2}{\rho^2 + \frac{1}{4}}.
\end{equation}
If $\Omega \in \mathcal{A}_r$, we use a recent result of D. Bucur and V.  Amato (to appear, private communication) that is a quantitative
improvement of the Payne-Weinberger inequality. They prove that there exists a constant $C$, that is computable in dimension 2, such that
for any convex domain it holds
\begin{equation}\label{buam}
D^2(\Omega) \mu_1(\Omega) \geq \pi^2 + C \frac{w^2(\Omega)}{D^2(\Omega)}.
\end{equation}
Note that this constant $C$ has a numerical value around 6.
Therefore, using in \eqref{buam} the property defining $\mathcal{A}_r$, we get
\begin{equation}\label{minAr}
D^2(\Omega) \mu_2(\Omega) \geq D^2(\Omega) \mu_1(\Omega) > \pi^2 + C \rho^2.
\end{equation}
Now we are interested in the minimum of the two values appearing in the right-hand side of Equations \eqref{minAf} and \eqref{minAr}
and we want to choose $\rho$ such that this minimal value is maximum. Since the two functions in $\rho$ are respectively
decreasing and increasing, we must choose $\rho$ such that
$$ \frac{\pi^2}{\rho^2 + \frac{1}{4}} = \pi^2 + C \rho^2.$$
Solving this quadratic equation in $\rho^2$, we immediately get
$$\rho^2=\frac{-(\pi^2+\frac{C}{4})+\sqrt{(\pi^2+\frac{C}{4})^2+3C\pi^2}}{2C}$$
that provides the universal lower bound
$$
D^2(\Omega) \mu_2(\Omega) > \frac{\pi^2}{2} \left(1+\sqrt{1+\frac{7C}{2\pi^2}+\frac{C^2}{16\pi^4}}\right) - \frac{C}{8}=:C_2 > \pi^2.
$$

Now, for $\mu_3$, we proceed exactly in the same way as for $\mu_2$. The only difference is that we cut now in three parts the rectangle
$(0,D(\Omega))\times (0,w(\Omega))$ along its long side and we will choose another threshold $\rho$ at the end. This leads us to the following
estimates:
\begin{itemize}
\item if $\Omega \in \mathcal{A}_f$, then
\begin{equation}\label{minAf2}
D^2(\Omega) \mu_3(\Omega) \geq \frac{\pi^2}{\rho^2 + \frac{1}{9}}.
\end{equation}
\item if  $\Omega \in \mathcal{A}_r$, then
\begin{equation}\label{minAr2}
D^2(\Omega) \mu_3(\Omega)  \geq  \pi^2 + C \rho^2.
\end{equation}
\end{itemize}
Choosing a value of $\rho$ making the two right-hand sides of \eqref{minAf2} and \eqref{minAr2} equal give, we finally obtain
$$
D^2(\Omega) \mu_3(\Omega) > \frac{\pi^2}{2} \left(1+\sqrt{1+\frac{34C}{9\pi^2}+\frac{C^2}{81\pi^4}}\right) - \frac{C}{18}=:C_3.
$$
This concludes the proof.
\end{proof}
\begin{remark}
Previous lower bounds for Neumann eigenvalues of convex domains in terms of their diameter were already known in any dimension.
For example, in References \cite{Grom, Sch-Yau, HKP} one can find bounds like 
$ \mu_k \geq c_d k^{2/d}/ (\mathrm{diam}^2(\Omega))$ where $c_d$ is a positive constant
depending only on the dimension $d$. Although all of these papers prove the estimate in a quantitative way, they do not give the explicit value of $c_d$ (one can certainly deduce it by following their proof). See also \cite{Funa} for a recent survey on these questions. In our Theorem \ref{theolowermuk} the constant $c_2$
is completely explicit (and quite simple).

In higher dimension $d\geq 3$, one can certainly follow the same strategy that we employed in Theorem \ref{theolowermuk}. This would lead to the
following lower bound
$$\mu_k(\Omega) \geq \frac{[\sqrt[d]{k}]^2 }{d} \frac{\pi^2}{\mathrm{diam}^2(\Omega)}$$
where $[\sqrt[d]{k}]$ is the integer part (or the floor) of $\sqrt[d]{k}$. This bound is comparable with the previous one
$ \mu_k \geq c_d k^{2/d}/ (\mathrm{diam}^2\Omega)$ with, here, an explicit constant $c_d$.
\end{remark}
%
Using the lower bounds found in Theorem \ref{theolowermuk}, and using the upper bound $\mathrm{diam}^2(\Omega) \mu_k(\Omega)
< (2j_{0,1}+(k-1)\pi)^2$
found in \cite{Kr}, \cite{HeMi}, we immediately deduce
\begin{theorem}\label{theolowerMk}
We have the following lower bound for $M_k$:
$$M_k\geq \dfrac{C_k \pi^2}{(2j_{0,1}+(k-1)\pi)^2}$$
where  $C_k$ is the constant given in Theorem \ref{theolowermuk}.
\end{theorem}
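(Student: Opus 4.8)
The plan is to bound $J_k(D)$ from below by a quantity that is \emph{independent of $D$}, so that the same bound survives the infimum defining $M_k$. Since $J_k(D)=I_k(D)/\mu_k(D)$, I need a lower bound on the numerator $I_k(D)$ and an upper bound on the denominator $\mu_k(D)$, and the crucial feature is that both scale like $1/\mathrm{diam}^2(D)$, so the diameter cancels and nothing $D$-dependent survives.

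First I would bound $I_k(D)$ from below. Let $\Omega\subset D$ be any admissible competitor, i.e.\ a convex subdomain. Inclusion gives $\mathrm{diam}(\Omega)\le \mathrm{diam}(D)$, and Theorem \ref{theolowermuk} applied to the convex domain $\Omega$ yields
$$\mu_k(\Omega)\ge \frac{C_k}{\mathrm{diam}^2(\Omega)}\ge \frac{C_k}{\mathrm{diam}^2(D)}.$$
The right-hand side does not depend on the particular competitor $\Omega$, so it is a \emph{uniform} lower bound over the whole admissible class; taking the infimum over all convex $\Omega\subset D$ gives $I_k(D)\ge C_k/\mathrm{diam}^2(D)$. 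This uniformity is the only point deserving a moment's attention, and it is immediate here.

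Next I would bound $\mu_k(D)$ from above using the estimate of \cite{Kr} and \cite{HeMi}, valid for every planar convex domain,
$$\mu_k(D)\le \frac{(2j_{0,1}+(k-1)\pi)^2}{\mathrm{diam}^2(D)},$$
and simply divide the two displays: the factor $1/\mathrm{diam}^2(D)$ cancels identically, producing a bound free of $D$,
$$J_k(D)=\frac{I_k(D)}{\mu_k(D)}\ge \frac{C_k}{(2j_{0,1}+(k-1)\pi)^2}.$$
Since this holds for every convex $D\subset\R^2$, passing to the infimum gives the asserted lower bound for $M_k$. There is no genuine obstacle beyond invoking the two inequalities already available: all the difficulty is concentrated upstream, in Theorem \ref{theolowermuk} (the diameter lower bound) and in the cited upper bound, while the present statement is the clean scale-invariant division that combines them. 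As a consistency check, for $k=1$ one has $C_1=\pi^2$ (the Payne--Weinberger constant, for which the lower bound on $I_1$ is in fact sharp by Theorem \ref{theoI1}) and $(2j_{0,1}+(k-1)\pi)^2=4j_{0,1}^2$, so the bound reduces to $\pi^2/(4j_{0,1}^2)$, recovering exactly the value of $M_1$ obtained in Theorem \ref{theoM1}, and sharply so, since the eigenvalue upper bound is attained in the limit along collapsing triangles.
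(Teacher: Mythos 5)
Your argument is precisely the paper's (one-line) proof: apply Theorem \ref{theolowermuk} to an arbitrary convex $\Omega\subset D$ and use $\mathrm{diam}(\Omega)\le\mathrm{diam}(D)$ to get $I_k(D)\ge C_k/\mathrm{diam}^2(D)$, divide by the Kr\"oger--Henrot-Michetti upper bound $\mu_k(D)\le (2j_{0,1}+(k-1)\pi)^2/\mathrm{diam}^2(D)$, and let the diameters cancel. One point to flag: what you actually derive is $M_k\ge C_k/(2j_{0,1}+(k-1)\pi)^2$, which differs from the printed statement by a factor $\pi^2$; since the constant $C_k$ of Theorem \ref{theolowermuk} already carries the factor $\pi^2$ (e.g.\ $C_k=[\sqrt{k}]^2\pi^2/2$ for $k\ge 4$), the extra $\pi^2$ in the statement is evidently a misprint --- your own $k=1$ consistency check (recovering $M_1=\pi^2/(4j_{0,1}^2)$, whereas the literal constant would give $\pi^4/(4j_{0,1}^2)>1\ge M_1$, and for $k\ge4$ would exceed the upper bound of Proposition \ref{prop_upper}) confirms that the correct numerator is $C_k$ alone.
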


\subsection{An iterative scheme}
Here we assume $k$ to be fixed.
Let $D_1$ be a given box, and $\Omega_1$ a solution of the {\it interior problem} for $D_1$. Then, let us
introduce $D_2$ a solution of the {\it exterior problem} for $\Omega_1$ and, by induction:
$\Omega_n$ a solution of the {\it interior problem} for $D_n$ and 
$D_{n+1}$ a solution of the {\it exterior problem} for $\Omega_n$. Then, we claim
\begin{theorem}\label{thm-iter}
The sequence $\{\mu_k(D_n)\}_{n\in \mathbb N}$ is increasing, the sequence $\{\mu_k(\Omega_n)\}_{n\in \mathbb N}$ is decreasing, therefore
the sequence $\{J_k(D_n)\}_{n\in \mathbb N}$ is decreasing.\\
Moreover, up to some subsequences,  the sequences of convex sets $\{\Omega_n\}_{n\in \mathbb N}$ and $\{D_n\}_{n\in \mathbb N}$ 
either converge in the Hausdorff sense to a pair $(\Omega^\ast,D^\ast)$ 
that is stationary for this construction (i.e.  $\Omega^\ast$ is solution of the interior problem for $D^\ast$ and $D^\ast$ is 
solution of the exterior problem for $\Omega^\ast$) 
or both converge to a segment.
\end{theorem}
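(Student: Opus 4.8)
The plan is to first establish the monotonicity of the two eigenvalue sequences purely from the definitions of the interior and exterior problems, and then to extract convergent subsequences of the geometric objects and pass to the limit.

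For the monotonicity, the key observation is that each step of the iteration uses the previous domain as an admissible competitor. First I would note that, since $\Omega_n \subset D_n$ and $\Omega_n$ is convex, $D_n$ is an admissible \emph{exterior} competitor for the obstacle $\Omega_n$; as $D_{n+1}$ maximizes $\mu_k$ among convex domains containing $\Omega_n$, we get $\mu_k(D_{n+1}) \geq \mu_k(D_n)$. Symmetrically, since $\Omega_n \subset D_{n+1}$ and $\Omega_n$ is convex, $\Omega_n$ is an admissible \emph{interior} competitor for the box $D_{n+1}$; as $\Omega_{n+1}$ minimizes $\mu_k$ among convex subdomains of $D_{n+1}$, we get $\mu_k(\Omega_{n+1}) \leq \mu_k(\Omega_n)$. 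Chaining the two inequalities through $\mu_k(\Omega_n) \geq \mu_k(\Omega_{n+1})$ and $\mu_k(D_n) \leq \mu_k(D_{n+1})$ shows that $\{\mu_k(D_n)\}$ is nondecreasing and $\{\mu_k(\Omega_n)\}$ is nonincreasing. Combined with the inclusion $\Omega_n \subset D_n$ (hence $\mu_k(\Omega_n) \geq \mu_k(D_n)$ is \emph{not} automatic, so instead I use $J_k(D_n) = I_k(D_n)/\mu_k(D_n) = \mu_k(\Omega_n)/\mu_k(D_n)$), the ratio $J_k(D_n)$ is a quotient of a nonincreasing numerator by a nondecreasing denominator, hence nonincreasing.

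For the convergence, I would first obtain uniform bounds on the geometry. The diameters stay bounded: by Theorem \ref{theoexisext} and its remark, $\mathrm{diam}(D_{n+1}) \leq (2j_{0,1}+(k-1)\pi)/\sqrt{\mu_k(\Omega_n)}$, and since $\mu_k(\Omega_n) \geq \mu_k(\Omega_1) \geq \mu_k(D_1) \cdot J_k(D_1)$ stays bounded below by the fixed positive quantity $\mu_k(\Omega_1)$ (the sequence is nonincreasing but we need a lower bound; it is bounded below by, say, $\pi^2/\mathrm{diam}^2(D_1)$ via Corollary \ref{BuserLemma2} only if diameters are controlled, so I would instead argue that $\mu_k(\Omega_n)$ is nonincreasing and positive, hence bounded, giving a uniform diameter bound on the $D_n$, and a fortiori on the $\Omega_n \subset D_n$). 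With uniformly bounded diameters, the Blaschke selection theorem yields subsequences of $\{\Omega_n\}$ and $\{D_n\}$ converging in the Hausdorff sense to convex limits $\Omega^\ast \subset D^\ast$.

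Finally I would pass to the limit. Either the limiting sets are genuinely two-dimensional convex domains, or one of them degenerates to a segment; since $\Omega^\ast \subset D^\ast$, if $D^\ast$ is a segment then so is $\Omega^\ast$, and the dichotomy in the statement is exactly this. In the nondegenerate case I use the continuity of $\mu_k$ under Hausdorff convergence of convex sets (as invoked in Theorems \ref{existencelemma} and \ref{theoexisext}) to conclude $\mu_k(\Omega_n)\to\mu_k(\Omega^\ast)$ and $\mu_k(D_n)\to\mu_k(D^\ast)$, and I must verify that $(\Omega^\ast,D^\ast)$ is stationary. This is the main obstacle: stationarity does not follow from continuity alone, because the subsequences for $\Omega_n$ and $D_n$ must be coordinated. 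I would argue that $\Omega^\ast$ is a minimizer for the box $D^\ast$ by comparing with an arbitrary convex competitor $\Omega \subset D^\ast$: since $D_n \to D^\ast$ in Hausdorff distance and $D^\ast$ is convex, a slightly shrunken copy $\Omega^{(t)}$ of $\Omega$ is eventually contained in $D_n$, so $\mu_k(\Omega_n) \leq \mu_k(\Omega^{(t)})$; passing to the limit in $n$ and then $t\to 1$ (using continuity of $\mu_k$ under the dilation) gives $\mu_k(\Omega^\ast) \leq \mu_k(\Omega)$, i.e. optimality of $\Omega^\ast$ for $D^\ast$. The symmetric comparison, using that an enlarged competitor containing $\Omega^\ast$ eventually contains $\Omega_n$, establishes that $D^\ast$ is a maximizer for the obstacle $\Omega^\ast$, completing the stationarity and hence the proof.
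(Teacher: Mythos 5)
Your overall strategy coincides with the paper's: the same two comparison arguments give the monotonicity of $\mu_k(D_n)$ and $\mu_k(\Omega_n)$ and hence of $J_k(D_n)=\mu_k(\Omega_n)/\mu_k(D_n)$; compactness comes from a uniform diameter bound plus Blaschke selection; and stationarity of the limit pair is obtained exactly as in the paper, by comparing with a slightly dilated or shrunken copy of an arbitrary competitor, which is eventually admissible for $D_n$ (resp.\ contains $\Omega_n$) because of Hausdorff convergence to a nondegenerate convex limit. These parts are fine.

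The one step that does not hold up as written is the uniform diameter bound. You start from $\mathrm{diam}(D_{n+1}) \leq (2j_{0,1}+(k-1)\pi)/\sqrt{\mu_k(\Omega_n)}$, which requires a \emph{lower} bound on $\mu_k(\Omega_n)$; but you have just proved that this sequence is nonincreasing, so the inequality $\mu_k(\Omega_n)\geq\mu_k(\Omega_1)$ you first invoke is false, and the fallback ``nonincreasing and positive, hence bounded'' only gives an upper bound, which is useless here (a priori $\mu_k(\Omega_n)$ could tend to $0$, making the right-hand side blow up). The repair is short and is what the paper does: apply the bound $\mu_k(D_n)\leq (2j_{0,1}+(k-1)\pi)^2/\mathrm{diam}^2(D_n)$ to $D_n$ itself and use the monotonicity you already established in the \emph{increasing} direction, $\mu_k(D_n)\geq\mu_k(D_1)>0$, to conclude $\mathrm{diam}(D_n)\leq (2j_{0,1}+(k-1)\pi)/\sqrt{\mu_k(D_1)}$; the bound for $\Omega_n$ then follows from $\Omega_n\subset D_n$. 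Two smaller remarks: you should extract a single common subsequence along which both $\Omega_n$ and $D_n$ converge (this is the ``coordination'' you worry about, and it is handled by nested extraction); and in the stationarity step for the exterior problem the argument genuinely uses that $\Omega^\ast$ is not a segment (otherwise no dilate $(1+\varepsilon)\Omega^\ast$ need contain $\Omega_n$), which is where the dichotomy in the statement enters.
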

Therefore, if there exists a minimizing pair for $J_k$, this scheme could be a good way to get it. 
We can qualify it as a descent algorithm since $J_k$ decreases along the sequence.
On the other hand,
if the sequences collapse to a segment, this would prove non-existence and give the value of $\inf J_k$
as explained before.
\begin{proof}[Proof of Theorem \ref{thm-iter}]
Let $k$ be fixed. Let $\{\Omega_n\}_n$ and $\{D_n\}_n$ be the two sequences of convex sets defined by recursion as above. 
Exploiting the optimality of $\Omega_n$ and $\Omega_{n+1}$, we infer that 
$$
J_k(D_n)=\frac{I_k(D_n)}{\mu_k(D_n)} = \frac{\mu_k(\Omega_n)}{\mu_k(D_n)}, \quad 
J_{k}(D_{n+1}) =\frac{I_{k}(D_{n+1})}{\mu_k(D_{n+1})} =  \frac{\mu_k(\Omega_{n+1})}{\mu_k(D_{n+1})}.
$$
By construction, we have that both $D_n$ and $D_{n+1}$ contain $\Omega_n$, and between the two, $D_{n+1}$ has greater $\mu_k$, in view of its optimality for the exterior problem on $\Omega_n$. In formulas, we have
$$
\mu_k(D_{n+1}) \geq \mu_k(D_n).
$$
Similarly, both $\Omega_n$ and $\Omega_{n+1}$ are contained into $D_{n+1}$, and between the two, $\Omega_{n+1}$ has lower $\mu_k$, in view of its optimality for interior problem on $D_{n+1}$. In formulas, we have
$$
\mu_k(\Omega_{n+1}) \leq \mu_k(\Omega_n). 
$$
By combining these inequalities with the expressions of $J_k(D_n)$ and $J_k(D_{n+1})$, we deduce the desired monotonicity, namely $J_k(D_n) \geq J_k(D_{n+1})$.

Now, let us check that the two sequences $\Omega_n$ and $D_n$ have bounded diameters. Assume that (for a subsequence) the diameter of $D_n$ is not bounded; by the inequality
$$\mu_k(D_n) \leq (2j_{0,1}+(k-1)\pi)^2/\mathrm{diam}^2 (D_n)$$
this would imply that $\mu_k(D_n)$ goes to zero; in contradiction with the fact that the sequence $\mu_k(D_n)$ is increasing.
Then the diameters of the $D_n$ are uniformly bounded. Moreover, since $\Omega_n \subset D_n$ this is also the case for the $\Omega_n$.
Finally, the sequences having bounded diameter, the alternative on the convergence of these sequences of convex domains is classical.

Now we show that 
\begin{align}\label{stationary1}
\mu_k(\Omega^{\ast})=&\inf \{\mu_k(\Omega) , \Omega\subseteq D^{\ast}, \Omega\text{ convex}\}
\end{align}
For a domain $\Omega$ in $\mathbb{R}^2$, $x\in \mathbb{R}^2$, and $r>0$ we set 
$$
r\Omega + x:= \{ry+x: y\in \Omega\}.
$$
since $D_n$ converges to $D_{\ast}$ for the Hausdorff convergence (of convex sets) we have, see \cite[Chapter 2]{HenPie}
$$
D_{\ast}\subseteq (1+\varepsilon)(D_n-x_n)+x_n
$$
for some $x_n\in \mathbb{R}^2$ and for sufficiently large $n$. We thus get
$$
\frac{1}{1+\varepsilon}(\Omega-x_n)+x_n\subseteq D_n,
$$
which gives 
$$
\mu_k(\Omega_n)\leq \mu_k\Big(\frac{1}{1+\varepsilon}(\Omega-x_n)+x_n\Big)=(1+\varepsilon)^2\mu_k(\Omega)
$$
by the definition of $\Omega_n$.
Letting $n\to \infty$ and then $\varepsilon \to 0$ we have $\mu_k(\Omega_{\ast})\leq \mu_k(\Omega)$, which shows (\ref{stationary1}).

Next we show the equality 
\begin{align}\label{stationary2}
    \mu_k(D_{\ast})=\ &\sup\{\mu_k(D'), \Omega_{\ast}\subseteq D', D'\text{ convex}\}.
\end{align}
Let $D'$ be convex such that $\Omega_{\ast}\subseteq D'$. For any $\varepsilon>0$ since $\Omega_n$ converges to $\Omega_{\ast}$ and each $\Omega_{\ast}$ is not a segment we have
 $$
 \Omega_n\subseteq (1+\varepsilon)(\Omega_{\ast}-y_n)+y_n
 $$
 for some $y_n\in \mathbb{R}^2$ and for sufficiently large $n$. This implies
 $$
 \Omega_n\subseteq (1+\varepsilon)(D'-y_n)+y_n
 $$
 and by the definition of $D_{n+1}$ we obtain
 $$
 \mu_k(D_{n+1})\geq \mu_k( (1+\varepsilon)(D'-y_n)+y_n)=\frac{1}{(1+\varepsilon)^2}\mu_k(D').
 $$
 Letting $n\to \infty$ and then $\varepsilon \to 0$ we have $\mu_k(D_{\ast})\geq \mu_k(D')$, which shows (\ref{stationary2}). This completes the proof.
\end{proof}
\begin{remark} If we can prove that a geometric quantity, like the minimal width, goes to zero under this
algorithm, this would also a give a direct way to prove non-existence of a minimizer for $\inf J_k$.
\end{remark}

\section{Some numerical illustrations}\label{secnum}

In this section we present a numerical optimization scheme for the interior problem and the exterior problem.

\medskip

{\bf Theoretical description of shapes.} Here we briefly introduce the representation of admissible shapes based on support functions. For more details about this standard approach, we refer to \cite{AnBo} and the references therein.

We identify a planar convex set $\Omega$ with its support function $f_\Omega$ defined as follows:
$$
f_\Omega: \mathbb S^1 \to \mathbb R, \quad f_\Omega(\theta):= \sup_{x \in \Omega} \left(x\cdot (\cos \theta, \sin \theta)\right),
$$
where, without loss of generality, we have assumed the origin to be an interior point of $\Omega$. The support function, in turn, being $2\pi$-periodic, can be identified with the collection of Fourier coefficients $a_0, \{a_k, b_k\}_{k\in \mathbb N}$:
$$
f_\Omega(\theta)=a_0 + \sum_{k\geq 1} [ a_k \cos(k\theta) + b_k \sin(k \theta)].
$$
Following our notation for the interior and exterior problem, we denote by $f_\Omega$ the admissible support functions for the interior problem and by $f_D$ the admissible support functions for the exterior problem. 

Let us now pass to the description of the constraints. Convexity is encoded by the following inequality, intended in the sense of distributions:
\begin{equation}\label{constraint1}
f_\Omega(\theta) + f_\Omega''(\theta) \geq 0, \quad f_D(\theta) + f_D''(\theta) \geq 0.
\end{equation}
In terms of the Fourier coefficients (of $f_\Omega$ or $f_D$), this reads
\begin{equation}\label{constraint1F}
a_0 + \sum_{k \geq 1}(1-k^2)[a_k \cos(k\theta) + b_k \sin(k \theta)] \geq 0.
\end{equation}
On the other hand, the set inclusion $\Omega_1\subset \Omega_2$ is equivalent to the ordering $f_{\Omega_1} \leq f_{\Omega_2}$ between the support functions. Therefore, in the interior and exterior problem we will impose 
\begin{equation}\label{constraint2}
f_{\Omega} \leq f_D, \quad f_D \geq f_\omega,
\end{equation}
where $D$ is the bounding box and $\omega$ is the obstacle. In terms of the Fourier coefficients, we have: for the interior problem
\begin{equation}\label{constraint2F}
a_0 + \sum_{k\geq 1} [ a_k \cos(k\theta) + b_k \sin(k \theta)] \leq \widehat{a}_0 + \sum_{k\geq 1} [ \widehat{a}_k \cos(k\theta) + \widehat{b}_k \sin(k \theta)],
\end{equation}
and, for the exterior problem,
\begin{equation}\label{constraint2Fbis}
a_0 + \sum_{k\geq 1} [ a_k \cos(k\theta) + b_k \sin(k \theta)] \geq \widehat{a}_0 + \sum_{k\geq 1} [ \widehat{a}_k \cos(k\theta) + \widehat{b}_k \sin(k \theta)],
\end{equation}
where $\widehat{a}_0, \{\widehat{a}_k, \widehat{b}_k\}_{k \in \mathbb N}$ denote, for the interior and exterior problem, the Fourier coefficients of $f_D$ or $f_\omega$, respectively.

\medskip

{\bf Numerical description of shapes.} For the numerical optimization, we need to work in a finite dimensional space: to this aim, following \cite{AnBo}, we adopt two alternative strategies. For the benefit of the reader, we recall here the main ideas.

\medskip

In the first strategy we approximate shapes by considering the truncated Fourier series of the support function at some index $N$. The unknown 
of the problem is then a vector
$$
F=(a_0, a_1, \ldots, a_N, b_1, \ldots, b_N) \in \mathbb R^{2N+1}.
$$
The two constraints are imposed on a discrete set of points, instead of the whole interval $[0,2\pi]$. We fix $M\in \mathbb N$ and we consider 
$$
\theta_m:= m \frac{2\pi}{M}, \quad m = 1,\ldots, M.
$$
Imposing \eqref{constraint1F}  on every $\theta_m$, we get
$$
a_0 + \sum_{k = 1}^N(1-k^2)[a_k \cos(k\theta_m) + b_k \sin(k \theta_m)] \geq 0 \quad \forall m = 1, \ldots, M.
$$
Similarly, imposing \eqref{constraint2F}-\eqref{constraint2Fbis} on every $\theta_m$, we get
$$
a_0 + \sum_{k= 1}^N [ a_k \cos(k\theta_m) + b_k \sin(k \theta_m)] \leq \widehat{a}_0 + \sum_{k= 1}^N [ \widehat{a}_k \cos(k\theta_m) + \widehat{b}_k \sin(k \theta_m)]
$$
for the interior problem and the reverse inequality for the exterior problem. Here $m$ runs from 1 to $M$.

Now we notice that both inequalities are linear in $F$ and can be rewritten in a more convenient way as
$$
AF \leq B,
$$
where $A$ is a $2M\times (2N+1)$-matrix and $B$ is a $2M$-vector. Let us write the components of $A$ and $B$. In the following, the indexes $m$ and $k$ run from 1 to $M$ and from 1 to $N$, respectively. For the interior problem, we have
$$
\left\{
\begin{array}{lll}
A_{m,1}& =-1 
\\
A_{m,1+k} &= - (1-k^2)\cos(k\theta_m)
\\
A_{m,1+N+k} &= - (1-k^2)\sin(k\theta_m)
\\
A_{M+m,1} & =  1
\\
A_{M+m,1+k} & = \cos(k \theta_m)
\\
A_{M+m,1+N+k} & = \sin(k \theta_m)
\end{array}
\right.
$$
and 
$$
\left\{
\begin{array}{ll}
B_m& = 0
\\
B_{M+m}&=
\widehat{a}_0 + \sum_{k= 1}^N [ \widehat{a}_k \cos(k\theta_m) + \widehat{b}_k \sin(k \theta_m)].
\end{array}
\right. 
$$
For the exterior problem, we have
$$
\left\{
\begin{array}{lll}
A_{m,1}& =-1 
\\
A_{m,1+k} &= - (1-k^2)\cos(k\theta_m)
\\
A_{m,1+N+k} &= - (1-k^2)\sin(k\theta_m)
\\
A_{M+m,1} & =  -1
\\
A_{M+m,1+k} & = -\cos(k \theta_m)
\\
A_{M+m,1+N+k} & = -\sin(k \theta_m)
\end{array}
\right.
$$
and 
$$
\left\{
\begin{array}{ll}
B_m& = 0
\\
B_{M+m}&= -
\widehat{a}_0 - \sum_{k= 1}^N [ \widehat{a}_k \cos(k\theta_m) + \widehat{b}_k \sin(k \theta_m)].
\end{array}
\right. 
$$

\medskip

The second strategy consists in considering a piece-wise affine approximation of $f_\Omega$ (for the interior problem) or $f_D$ (for the exterior problem): given $M\in \mathbb N$, the unknown is the vector
$$
F=(f_1, \ldots, f_M)\in \mathbb R^M,
$$
where $f_i$ represents $f_\Omega(i \tau )$ (for the interior problem) or  $f_D(i \tau)$ (for the exterior problem), with $\tau:=2\pi/M$. Here and in the following lines, the index $i$ will run from $1$ to $M$. 
Taking the approximation of derivatives by finite differences, we write the convexity constraint \eqref{constraint1} as
$$
\forall i \quad 
f_i + \frac{ f_{i+1} - 2 f_i + f_{i-1}  }{\tau^2}  \geq 0, 
$$
where by periodicity we set $f_{-1}:=f_M$ and $f_{M+1}:=f_1$.
On the other hand, the constraint \eqref{constraint2} simply reads
$$
f_i \leq f_{D}( i\tau )\quad \hbox{or}\quad f_i \geq f_{\omega}(i \tau)
$$
for the interior or exterior problem, respectively.
As in the previous strategy, the two constraints can be rewritten in a more convenient way as $AF\leq B$, where $A$ is a $2M\times M$ matrix and $B$ is a $2M$ vector. The matrix $A$ is made of 2 blocks $M\times M$ aligned vertically. The first sub-matrix (above) is ``almost" tridiagonal, in the following sense: it has elements $-1+2/\tau^2$ on the main diagonal, elements $-1/\tau^2$ in the upper/lower diagonal and, due to the periodicity of the support function, also $A_{1,M}$ and $A_{M,1}$ are equal to $-1/\tau^2$; all the other elements are zero. The second (below) sub-matrix of $A$ is the identity matrix for the interior problem and it is minus the identity matrix for the exterior problem. 
The vector $B$ has two blocks, too: the former is the zero M-vector, namely $B_i=0$; the latter is $B_{M+i}=f_{D}(i \tau)$ for the interior problem and  $B_{M+i}= - f_{\omega}(i \tau)$ for the exterior problem.

\medskip

{\bf Optimization scheme for the interior problem.} Given $k \in \mathbb N$, we perform the following optimization: 
$$
\min\{\mu_k(\Omega_F)\ :\ F\in \mathbb R^{d}, \quad AF\leq B \}.
$$
Where $F$ is either the vector representing the first $d$ Fourier coefficients of $f_\Omega$ or the discretization of $f_\Omega$ at the points $i 2\pi/d$, $i=1, \ldots, d$. The matrix $A$ and the vector $B$ are constructed accordingly, following the procedure described in the previous paragraph. 

Here $\Omega_F$ is the convex shape associated to $f_\Omega$. The computation of $\mu_k$ is done using the Matlab function \texttt{solvepdeeig}. The optimization is run using the \texttt{fmincon} routine of Matlab, with linear inequality constraints, and taking a random starting point $F_0$.

Let us now present some examples, in which we take $D$ to be the square or the disk, and $k=1, 2, 3, 4$. Our numerical optimization goes in the same direction of the theoretical study performed in Section \ref{secsquaredisk}, in the following sense:
\begin{itemize}
\item for $k=1$, numerics suggest that the best shape is a segment realizing the diameter (see Fig. \ref{fig-mu1disk});
\item for $D=$ disk and $k=2,4$ or $D=$ square and $k=2,3$, numerics suggest that $D$ should be a self-domain;
\item for $D=$ disk and $k=3$ we find a better domain than the disk;
\item for $D=$ square and $k=4$ we find a better domain than the square.
\end{itemize}

\begin{center}
\begin{figure}[h]
\includegraphics[width=5cm]{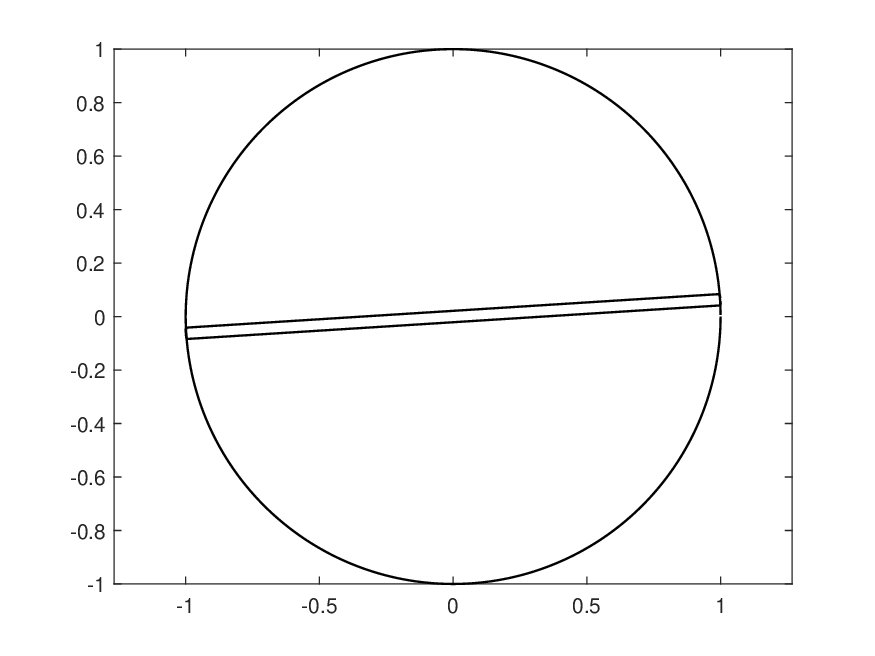}
\caption{Optimal shape for $k=1$ in the unit disk with $\mu_1 = 2.468$. Optimization algorithm: Strategy 2, with 50 points, and constraint on the minimal height of the mesh $H_{min}=0.001$.}\label{fig-mu1disk}
\end{figure}
\end{center}

Let us present more in detail the two last items. We recall that when $D$ is the unit disk, $\mu_3(D)=j_{2,1}^2\simeq 9.33$. Following Strategy 1, we find a shape with $\mu_3 = 8.86$ (see Fig. \ref{fig-mu3disk}-left), whereas following Strategy 2, we find a shape with $\mu_3=8.516$ (see Fig. \ref{fig-mu3disk}-right). Both strategies allow to confirm that the disk is not a self-domain for the interior problem with $k=3$. 

\begin{center}
\begin{figure}[h]
\includegraphics[width=5cm]{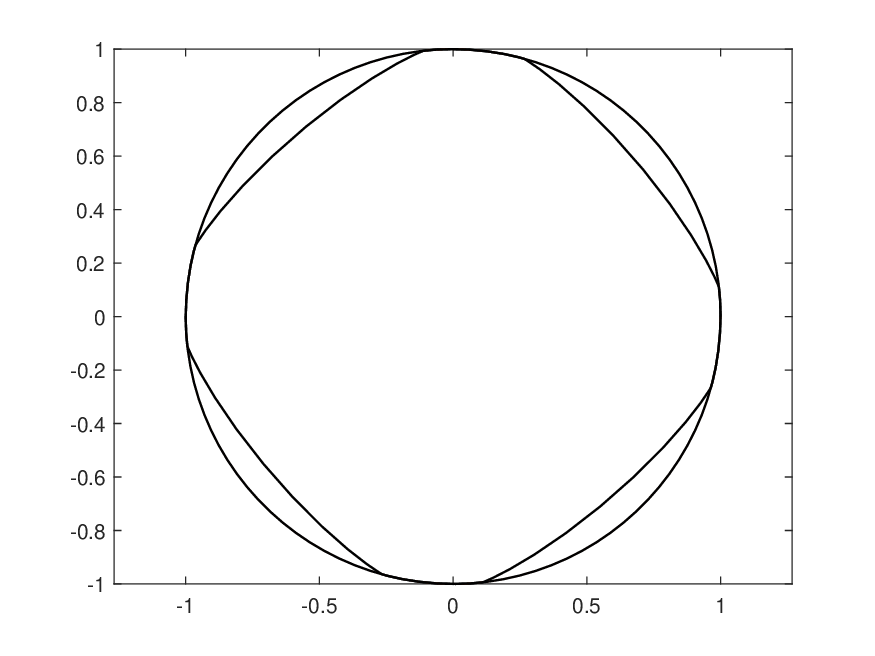}\quad \includegraphics[width=5cm]{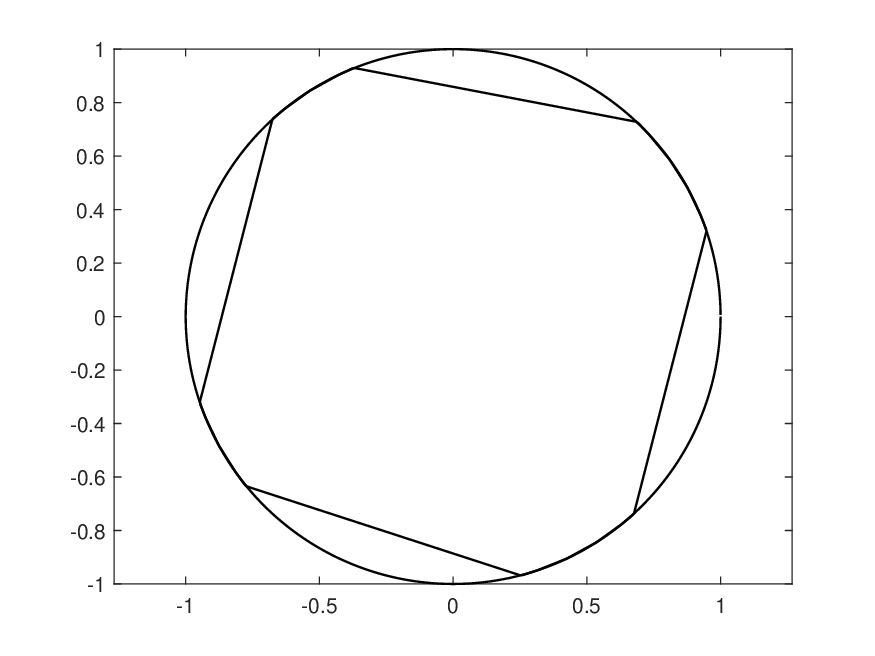}
\caption{Left: shape with $\mu_3= 8.86$ obtained using Strategy 1 with 17 Fourier coefficients. Right: shape with $\mu_3=8.51$ obtained using Strategy 2 with 50 points.}\label{fig-mu3disk}
\end{figure}
\end{center}

Numerics seem to suggest that the optimal shape should be the intersection of the disk $D$ and a larger square. These shapes can be described by one parameter, the angle $2\theta$ of each circular sector, with $\theta \in [0,\pi/4]$. Let us denote by $\Omega_\theta$ the shape associated to $\theta$. Then $\Omega_0$ is the square inscribed into the disk $D$, $\Omega_{\pi/4}=D$ (intersection of the disk and the circumscribed square). A numerical optimization gives $\theta_{opt}=0.21$ with $\mu_3(\Omega_{\theta_{opt}}) = 8.47$, see Fig. \ref{fig-mu3disk2}.

\begin{center}
\begin{figure}[h]
\includegraphics[width=5cm]{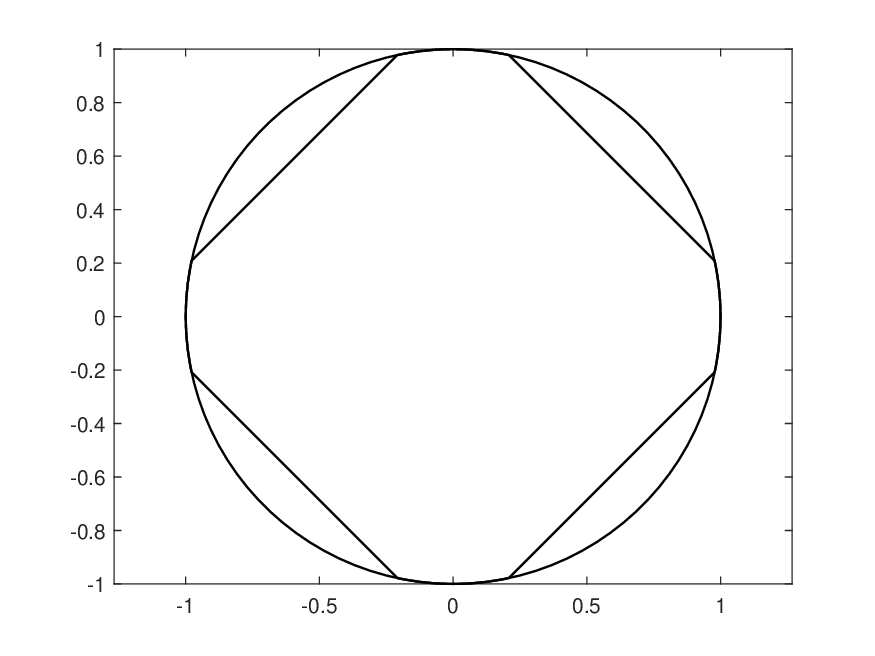}\quad \includegraphics[width=5cm]{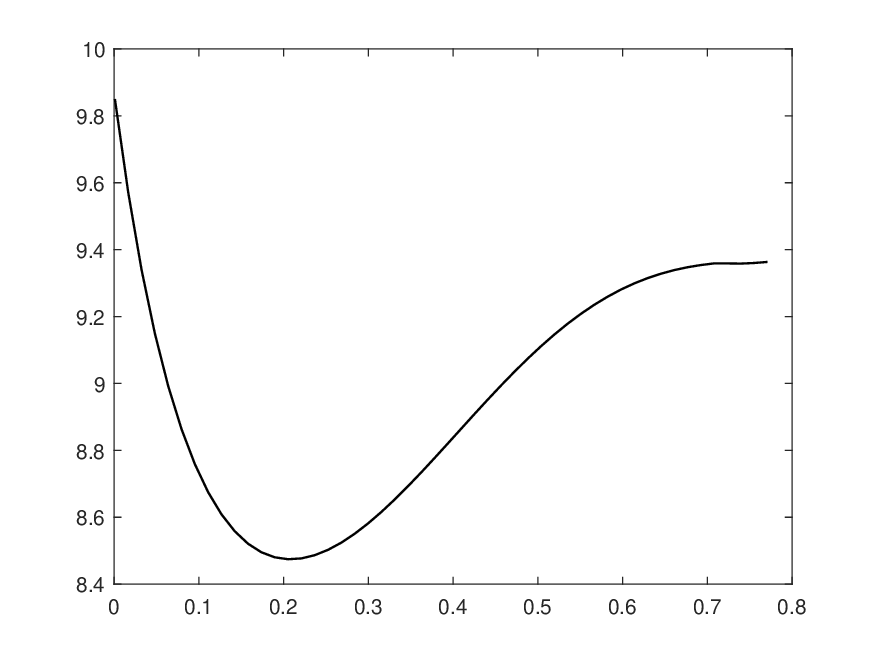}
\caption{Left: numerical optimizer associated to $\theta=0.21$. Right: plot of the function $\theta\mapsto \mu_3(\Omega_\theta)$.}\label{fig-mu3disk2}
\end{figure}
\end{center}

Let now $D$ be the square $[-1,1]^2$ and $k=4$. We recall that $\mu_4(D)=\pi^2 \simeq 9.86$. As before, numerics suggest that the optimal shape should be a polygon inside $D$, more precisely, an octagon (see Fig. \ref{fig-mu4square}-left). When we restrict to octagons, numeric optimization provides many local minimizers, all of them have $\mu_4$ striclty less than the square $D$. In Fig. \ref{fig-mu4square}-right an example.

\begin{center}
\begin{figure}[h]
\includegraphics[width=5cm]{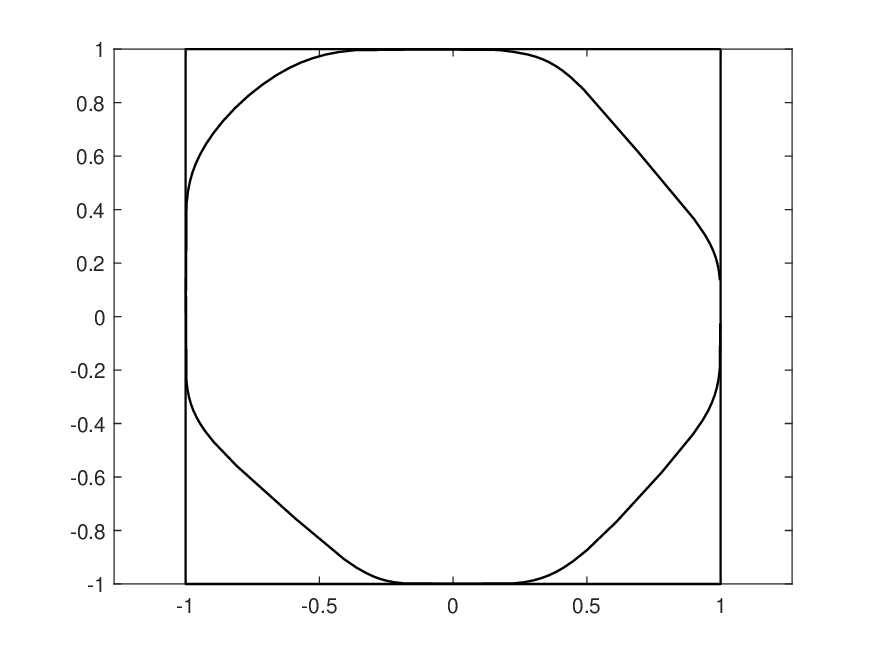}\quad \includegraphics[width=5cm]{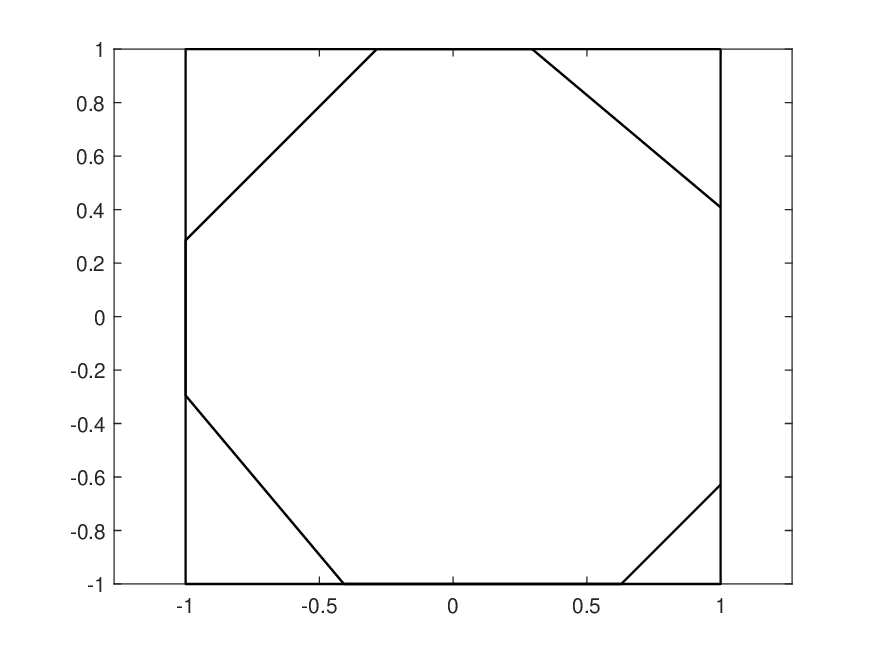}
\caption{Left: shape with $\mu_4=8.95$ obtained using Strategy 2 with 80 points. Right: a local minimizer with $\mu_4=8.74$ among octagons.}\label{fig-mu4square}
\end{figure}
\end{center}

{\bf Optimization scheme for the exterior problem.} We follow the same idea used for the interior problem, with the proper modifications: to maximize $\mu_k$ we solve
$$
\min\{-\mu_k(D_F)\ :\ F\in \mathbb R^{d}, \quad AF\leq B \},
$$
where $A$ and $B$ are the matrix and the vector defined above, and $d$ is either the number of Fourier coefficients of $f_D$ that we are considering (Strategy 1), or the number of discretization points of the variable of $f_D$ (Strategy 2).

As in the previous paragraph, we consider two obstacles, the disk and the square, and the first 4 indexes $k$. Numerics seem to confirm the theoretical study performed in Section \ref{secsquaredisk}, in the following sense:
\begin{itemize}
\item for $\omega=$ disk and $k=1, 3$ or $\omega=$ square and $k=1, 3, 4$, numerics suggest that $\omega$ should be a self-domain;
\item for $\omega=$ disk and $k=2,4$ we find better domains than the disk;
\item for $\omega=$ square and $k=2$ we find a better domain than the square.
\end{itemize}

Let us explain in more detail the two last items. Let us start with $\omega$ the unit disk, for which $\mu_2(\omega)=j_{1,1}^2 \simeq 3.39$ and $\mu_4(\omega)=j_{2,1}^2 \simeq 9.33$. Numerics allow to find better shapes, searched in  a particular class of shapes: convex envelopes of the disk and 2 points for $k=2$, and of the disk and 4 points for $k=4$. The values are specified in Fig. \ref{fig-mu2disk} and Fig. \ref{fig-mu4disk}.

\begin{center}
\begin{figure}[h]
\includegraphics[width=5cm]{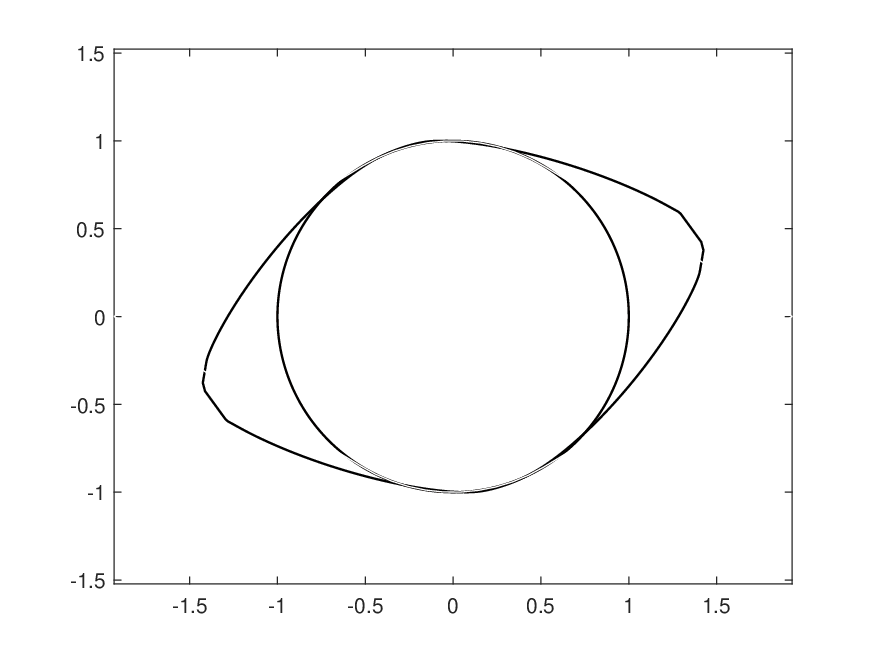}\quad \includegraphics[width=5cm]{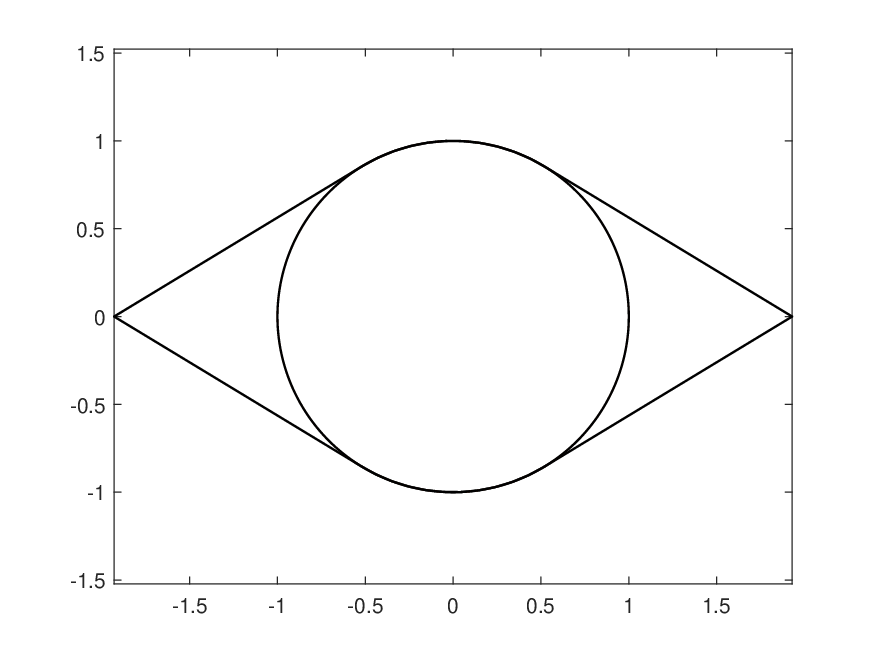}
\caption{Left: shape with $\mu_2=3.59$ obtained using Strategy 1, with 21 Fourier coefficients. Right: candidate optimizer with $\mu_2=3.61$ searched among convex envelopes of the disk and 2 points, obtained with a pair of antipodal points with distance 1.93 from the origin.}\label{fig-mu2disk}
\end{figure}
\end{center}
\begin{center}
\begin{figure}[h]
\includegraphics[width=5cm]{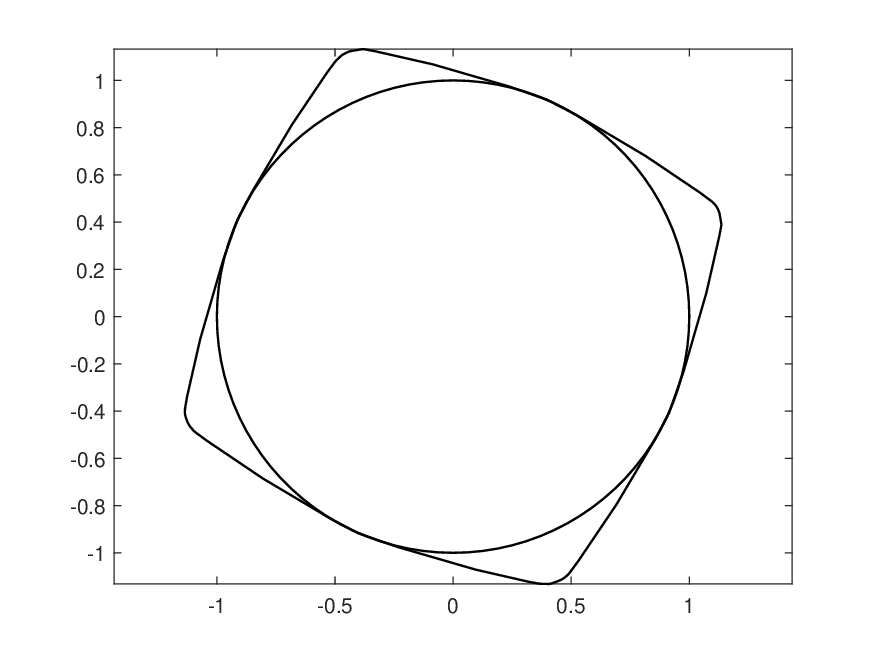}\quad \includegraphics[width=5cm]{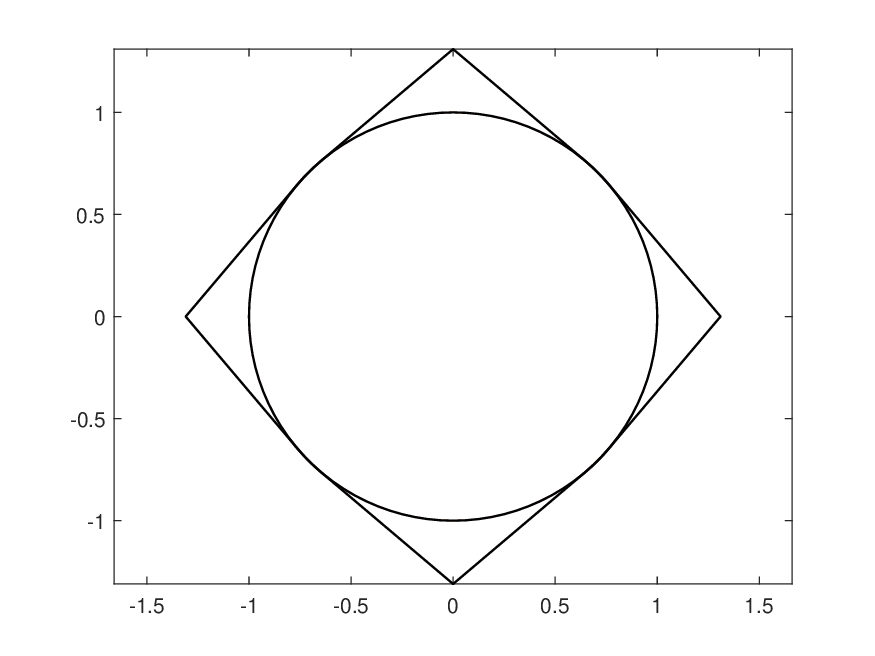}
\caption{Left: shape with $\mu_4=9.87$ obtained using Strategy 1, with 59 Fourier coefficients. Right: candidate maximizer with $\mu_4=9.94$ searched among convex envelopes of the disk and 4 points, obtained with 2 pairs of antipodal points with distance 1.31 from the origin.}\label{fig-mu4disk}
\end{figure}
\end{center}

We conclude with the case of the square $\omega=[-1,1]^2$ and $k=2$. Recalling that $\mu_2(\omega)=\pi^2/4 \simeq 2.46$, we show that the square is not a self-domain for the exterior problem: we first find a shape $D \supset \omega$ with $\mu_2=2.84$ by following Strategy 2 and then, working among hexagons which are convex envelopes of the square and a pair of points, we find another (better) candidate with $\mu_2=2.88$.

\begin{center}
\begin{figure}[h]
\includegraphics[width=5cm]{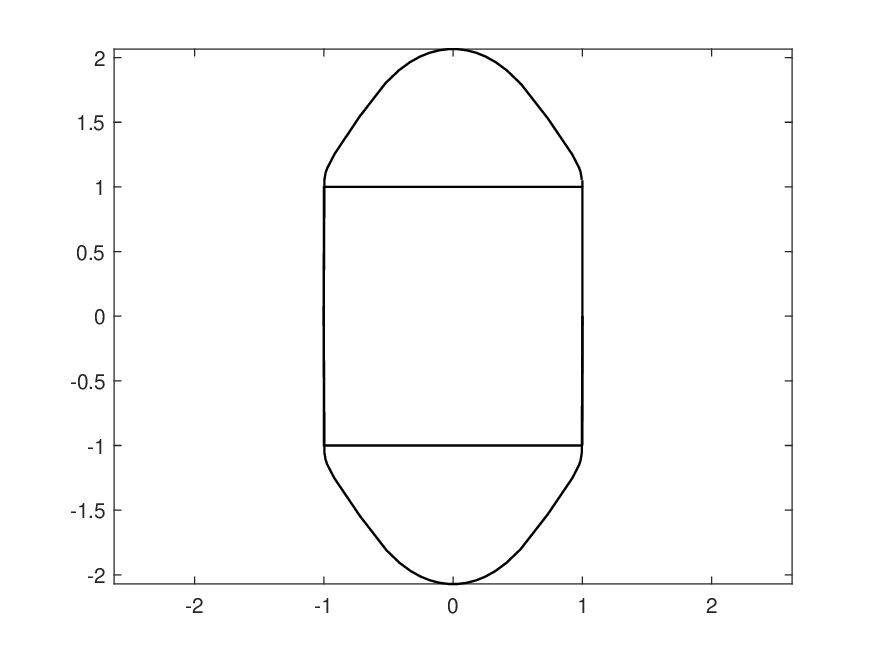}\quad \includegraphics[width=5cm]{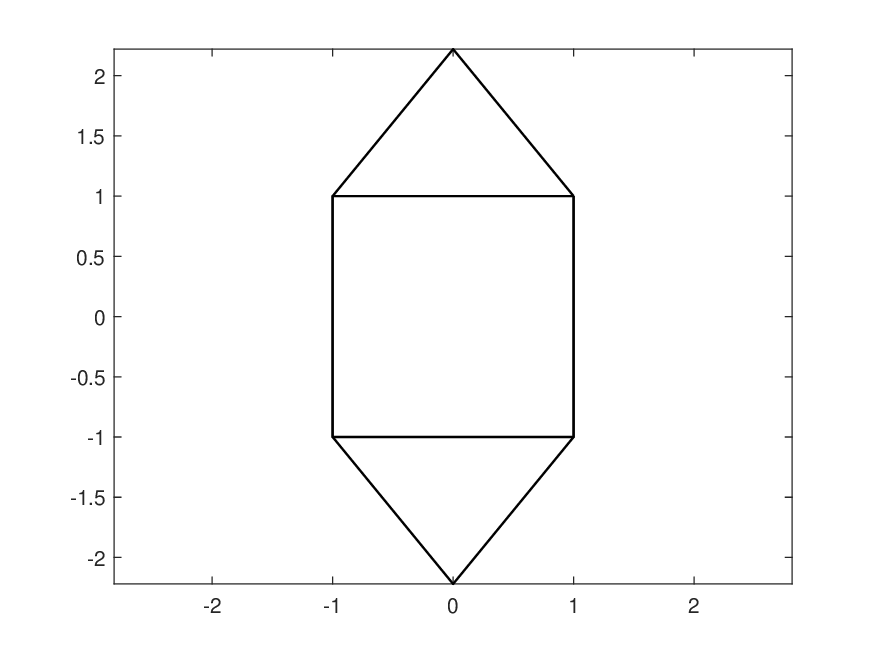}
\caption{Left: shape with $\mu_2=2.84$, obtained using Strategy 2 with 50 points. Right: candidate maximizer with $\mu_2=2.88$ searched among convex envelopes of the square and 2 points, obtained with a pair of antipodal points with distance 2.22 from the origin.}
\end{figure}
\end{center}

\vspace{5mm}
{\bf Acknowledgements}: 
The six authors want to thank first the University of Lorraine and the University of Tohoku for a Grant that allow them to travel in Japan and
in France in 2023 making this joint work possible.
For the French part, this work has also been supported by the project ANR-18-CE40-0013 SHAPO financed by the French Agence Nationale de la Recherche (ANR).  
L. Cavallina was partially supported by JSPS KAKENHI Grant Numbers JP21KK0044 and JP22K13935, JP23H04459.
K. Funano is supported by JSPS KAKENHI Grant Number JP17K14179.
 I. Lucardesi is member of the Italian research group GNAMPA of Istituto Nazionale di Alta Matematica (INdAM) and her work has been partially supported by the the INdAM-GNAMPA project 2023 “Esistenza e proprietà fini di forme ottime" n. CUP-E53C22001930001.  
 S. Sakaguchi has been partially supported by JSPS KAKENHI Grant Numbers JP18H01126 and JP22K03381.
 
\appendix
\section{Proof of the generalized Buser's estimate}

For the sake of completeness, here is the proof in Buser's book \cite[section 8.2.1]{Bus}, revisited in the Euclidean space $\R^N$ and slightly generalized. We recall the statement here below.

\begin{lemma}\label{buslemapp}
(Generalized Buser Lemma) For any bounded domain $\Omega$ that is decomposed into a $j$-partition 
$\Omega_1,\ldots,\Omega_j$ and for any decomposition of the integer $k$ as the sum of $j$ positive integers: $k=k_1+\ldots + k_j$, we have 
\begin{eqnarray}
\mu_k(\Omega)\geq \min_i \mu_{k_i}(\Omega_i).\label{busineq1}
\end{eqnarray}
Moreover,   if  the inequality \eqref{busineq1} is an equality, then $\mu_{k_i}(\Omega_i)=\mu_{k}(\Omega)$ for all $i$ and there exists an eigenfunction $\varphi \in H^1(\Omega)$ associated with $\mu_k(\Omega)$ whose restriction to each $\Omega_i$ is also a Neumann eigenfunction for $\mu_{k_i}(\Omega_i)$. 
\end{lemma}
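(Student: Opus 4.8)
The plan is to run the classical Courant--Fischer argument adapted to the partition, the only genuinely variational input being the characterization
$$\mu_k(\Omega) = \min_{\substack{V\subset H^1(\Omega)\\ \dim V = k+1}}\ \max_{u\in V\setminus\{0\}} R_\Omega(u),\qquad R_\Omega(u):=\frac{\int_\Omega |\nabla u|^2}{\int_\Omega u^2},$$
which still makes sense when some piece fails to be Lipschitz, since there $\mu_1$ is defined directly as the relevant Rayleigh infimum. First I would fix on $\Omega$ the space $V$ spanned by eigenfunctions $\varphi_0,\ldots,\varphi_k$ associated with $\mu_0(\Omega),\ldots,\mu_k(\Omega)$, so that $\dim V = k+1$ and $R_\Omega(u)\le \mu_k(\Omega)$ for every $u\in V$. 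On each piece $\Omega_i$ I would fix the space $W_i\subset L^2(\Omega_i)$ spanned by the first $k_i$ Neumann eigenfunctions of $\Omega_i$ (those for $\mu_0(\Omega_i),\ldots,\mu_{k_i-1}(\Omega_i)$), so that $\dim W_i = k_i$.

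The key step is a dimension count. Consider the linear map $\Phi\colon V\to \bigoplus_{i=1}^j W_i$ sending $u$ to the tuple of $L^2(\Omega_i)$-orthogonal projections of $u|_{\Omega_i}$ onto $W_i$. Its target has dimension $\sum_i k_i = k < k+1 = \dim V$, so $\ker\Phi \neq \{0\}$: there is $u\in V$, $u\neq 0$, whose restriction to each $\Omega_i$ is $L^2$-orthogonal to the first $k_i$ eigenfunctions of $\Omega_i$. The variational characterization on each piece then gives $\int_{\Omega_i}|\nabla u|^2 \ge \mu_{k_i}(\Omega_i)\int_{\Omega_i} u^2 \ge m \int_{\Omega_i} u^2$, where $m:=\min_i \mu_{k_i}(\Omega_i)$ (this holds trivially also when $u\equiv 0$ on $\Omega_i$). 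Summing over $i$, and using that the $\Omega_i$ cover $\Omega$ up to a null set so that the integrals add, yields $R_\Omega(u)\ge m$; combined with $R_\Omega(u)\le \mu_k(\Omega)$ this gives the inequality \eqref{busineq1}.

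For the equality case I would revisit this same chain. If $\mu_k(\Omega)=m$, then all the displayed inequalities are equalities. In particular $R_\Omega(u)=\mu_k(\Omega)$ with $u=\sum_l c_l\varphi_l\in V$ forces $c_l=0$ whenever $\mu_l(\Omega)<\mu_k(\Omega)$, so $u$ is a genuine $\mu_k(\Omega)$-eigenfunction; and on each piece one has simultaneously $(\mu_{k_i}(\Omega_i)-m)\int_{\Omega_i} u^2 = 0$ and, whenever $u|_{\Omega_i}\neq 0$, the fact that $u|_{\Omega_i}$ realizes the minimal Rayleigh quotient over the $L^2$-orthogonal complement of $W_i$, so that $u|_{\Omega_i}$ is a Neumann eigenfunction of $\Omega_i$ for $\mu_{k_i}(\Omega_i)$.

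The main obstacle is to upgrade ``$\mu_{k_i}(\Omega_i)=m$ or $u\equiv 0$ on $\Omega_i$'' into ``$\mu_{k_i}(\Omega_i)=m$ for all $i$'', that is, to exclude that $u$ vanishes identically on a piece where $\mu_{k_i}(\Omega_i)>m$. Here I would invoke unique continuation: in the interior of $\Omega$ the eigenfunction $u$ solves $-\Delta u = \mu_k(\Omega)\,u$ and is therefore real-analytic, so if it vanished on the open set $\Omega_i$ it would vanish on the whole connected domain $\Omega$, contradicting $u\neq 0$. Thus $\int_{\Omega_i}u^2>0$ for every $i$, which forces $\mu_{k_i}(\Omega_i)=m=\mu_k(\Omega)$ for all $i$ and makes each $u|_{\Omega_i}$ the required eigenfunction, completing the proof.
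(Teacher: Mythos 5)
Your proof is correct and follows essentially the same route as the paper's: a dimension count producing a nonzero element of the span of the first $k+1$ Neumann eigenfunctions of $\Omega$ that is $L^2$-orthogonal on each piece to the first $k_i$ eigenfunctions of $\Omega_i$, followed by playing the two Rayleigh-quotient estimates against each other. In the equality case you are in fact slightly more careful than the paper, which asserts that all the $\mu_{k_i}(\Omega_i)$ coincide without explicitly excluding that the test function vanishes identically on some piece; your step of first upgrading $u$ to a genuine $\mu_k(\Omega)$-eigenfunction and then invoking unique continuation closes that small gap.
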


\begin{remark} The equality case in Lemma \ref{BuserLemma} occurs for instance when $\Omega$ is a square divided into two rectangles $\Omega_1$, $\Omega_2$ in such a way that $\mu_1(\Omega)=\mu_2(\Omega)=\mu_1(\Omega_1)=\mu_1(\Omega_2)$.
\end{remark}

\begin{proof}[Proof of Lemma \ref{buslemapp}]  Let us consider an $L^2$-orthonormal basis $f_0^i,f_1^i,\dots , f_{k_i-1}^i \in H^1(\Omega_i)$,
where $1\leq i\leq j$, associated with the first $k_i$ eigenvalues in $\Omega_i$, denoted by $\mu_0^i \leq \mu_1^i \leq \dots \leq \mu_{k_i -1}^i$. The main property that we will use in the sequel  is that, whenever a function $\varphi \in H^1(\Omega)$ satisfies  $\varphi \in Vect\{f_0^i,f_1^i,\dots f_{k_i-1}^i \}^{\bot}$ (the orthogonality here is intended in $L^2$) then 
$$
\int_{\Omega_i}|\nabla \varphi|^2 \;\mathrm{d}x \geq \mu_{k_i} \int_{\Omega_i} \varphi^2 \;\mathrm{d}x.
$$
This follows from the standard min-max principle  which says that  (see for instance Theorem 3.I.9. page 69 of \cite{Polterovitch}),
$$\mu_{k_i}(\Omega_i)=\min_{u \in H^1(\Omega_i) \cap Vect\{f_0^i,f_1^i,\dots f_{k_i-1}^i \}^{\bot} } \frac{\int_{\Omega_i} |\nabla u|^2 \;\mathrm{d}x}{\int_{\Omega_i} u^2 \;\mathrm{d}x}.$$

Now for each $0\leq \ell \leq k$ we consider a normalized eigenfunction $\varphi_\ell$ associated to the eigenvalue $\mu_\ell$ in the big domain $\Omega$. The space $Vect\{f_\ell^i\}_{i=1,\dots, j}^{\ell=0,\dots, k_i}$ is of dimension $k$ in $L^2(\Omega)$, and on the other hand the space $Vect\{\varphi_0, \dots, \varphi_k\}$ is of dimension $k+1$. Therefore, there exists a function $\varphi\in Vect(\varphi_\ell)$ that lies in the orthogonal of $Vect(f_\ell^i)$. In other words we can find some coefficients $\alpha_\ell$ such that the function 
$$\varphi = \alpha_0 \varphi_0+\cdots +\alpha_k\varphi_k$$
verifies 
$$ \int_{\Omega_i} \varphi f_\ell^i \;\mathrm{d}x=0, \quad \text{ for all }1\leq i\leq j, \text{ and }\quad 1\leq \ell \leq k_i-1.$$
Up to dividing $\varphi$ by its $L^2$ norm we can also assume that $\int_{\Omega} \varphi^2 \;\mathrm{d}x=1$ or put differently, $\sum_{j=0}^k \alpha_j^2 =1$.  We deduce that 
\begin{eqnarray}
\int_{\Omega} |\nabla \varphi|^2 \;\mathrm{d}x &=&\sum_{i=1}^j \int_{\Omega_i} |\nabla \varphi|^2 \;\mathrm{d}x  \notag \\
&\geq& \sum_{i=1}^j \mu_{k_i}(\Omega_i)\int_{\Omega_i} |\varphi|^2 \;\mathrm{d}x\geq \min_i(\mu_{k_i}(\Omega_i)). \label{equality11}
\end{eqnarray}
 On the other hand by orthogonality of $\{\varphi_j\}$ in $H^1(\Omega)$ we have
 \begin{eqnarray}
\int_{\Omega} |\nabla \varphi|^2 \;\mathrm{d}x &=& \sum_{j=0}^k \alpha_j^2\int_{\Omega}  |\nabla \varphi_j|^2 \;\mathrm{d}x \notag \\
&=&  \sum_{j=0}^k \alpha_j^2 \mu_j(\Omega) \notag \\
&\leq & \mu_k(\Omega) \label{equality14}
\end{eqnarray}
which proves that 
\begin{eqnarray}
\mu_k(\Omega)\geq \min_i(\mu_{k_i}(\Omega_i)). \label{equality12}
\end{eqnarray}
Now if equality occurs, then we have equality in \eqref{equality11}  which means that all the values of $\mu_{k_i}(\Omega_i)$ must be equal. Then there is equality also in \eqref{equality12} which means that $\mu_{k_i}(\Omega_i)=\mu_k(\Omega)$, for all $i$. Then there is also equality in \eqref{equality14} which means that $\varphi$ is an eigenfunction for $\mu_k(\Omega)$. Moreover the equality  in \eqref{equality11} says that the value of the Rayleigh quotient of $\varphi$ on $\Omega_i$ is equal to $\mu_{k_i}(\Omega_i)$ thus according to \cite[Theorem 3.I.9, page 69]{Polterovitch} we infer that the restriction of $\varphi$ on $\Omega_i$ must be a Neumann eigenfunction associated to $\mu_{k_i}(\Omega_i)$.
\end{proof}

\section{Squeezing or stretching lemma}

Let us now give a classical result that can be found for example in \cite[Proof of Proposition 8.1]{LS} or \cite[Lemma 6.20]{Lau-Siu}.
Let $t\in (0,1)$ and let  $\psi_t:\R^2\to \R^2$ be  the squeezing mapping defined by $\psi_t(x,y)=(x,ty)$ (we can do exactly the same proof with
$t\geq 1$ and in that case where the eigenvalue decreases this would be a stretching lemma).  For a domain $\Omega$ we denote by $\Omega_t:=\psi_t(\Omega)$ the squeezed domain in the ``vertical direction''.
\begin{lemma} \label{strechL}
For all $k\geq 1$ we have 
$$\mu_k(\Omega)\leq \mu_k(\Omega_t).$$
\end{lemma}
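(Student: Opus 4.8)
The plan is to use the variational (min-max) characterization of Neumann eigenvalues together with the change of variables induced by the squeezing map $\psi_t$. First I would recall that, by the Courant-Fischer min-max principle,
$$
\mu_k(\Omega_t)=\min_{\substack{V\subset H^1(\Omega_t)\\ \dim V=k+1}}\ \max_{0\neq v\in V}\ \frac{\int_{\Omega_t}|\nabla v|^2\,\mathrm{d}x}{\int_{\Omega_t} v^2\,\mathrm{d}x},
$$
and similarly for $\mu_k(\Omega)$. The key idea is that the pullback $u\mapsto u\circ\psi_t$ gives a linear bijection between $H^1(\Omega_t)$ and $H^1(\Omega)$ that preserves the $(k+1)$-dimensionality of test subspaces, so it suffices to compare Rayleigh quotients pointwise under this correspondence.

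Next I would carry out the change of variables explicitly. For $v\in H^1(\Omega_t)$ set $u:=v\circ\psi_t\in H^1(\Omega)$, so that $u(x,y)=v(x,ty)$. Since $\psi_t$ has Jacobian determinant $t$, the denominators transform as
$$
\int_{\Omega_t} v^2\,\mathrm{d}x = t\int_{\Omega} u^2\,\mathrm{d}x.
$$
For the numerator, the chain rule gives $\partial_x v = \partial_x u$ and $t\,\partial_y v = \partial_y u$ (evaluated at corresponding points), so
$$
\int_{\Omega_t}|\nabla v|^2\,\mathrm{d}x = t\int_{\Omega}\Big( (\partial_x u)^2 + t^{-2}(\partial_y u)^2 \Big)\,\mathrm{d}x.
$$
Hence the Rayleigh quotient of $v$ on $\Omega_t$ equals the quotient of $u$ on $\Omega$ but with the weighted Dirichlet energy $\int_\Omega (\partial_x u)^2 + t^{-2}(\partial_y u)^2$ in the numerator. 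Since $t\in(0,1)$ gives $t^{-2}\geq 1$, this weighted energy is pointwise at least the unweighted one $\int_\Omega|\nabla u|^2$, so the quotient of $v$ on $\Omega_t$ dominates the quotient of $u$ on $\Omega$.

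Finally I would assemble these pieces through the min-max formula. Given any optimal (or near-optimal) $(k+1)$-dimensional subspace $W\subset H^1(\Omega)$ realizing $\mu_k(\Omega)$, its image $V:=\{w\circ\psi_t^{-1} : w\in W\}$ is a $(k+1)$-dimensional subspace of $H^1(\Omega_t)$, and by the inequality above every $v\in V$ has Rayleigh quotient at least that of its counterpart in $W$. Taking maxima over the subspace and then the minimum over all $(k+1)$-dimensional subspaces yields $\mu_k(\Omega_t)\geq \mu_k(\Omega)$, which is the claim. The only mildly delicate point is the bookkeeping direction of the min-max argument—one must feed the $\Omega$-subspace into the $\Omega_t$ minimization—but since the map is a bijection preserving dimension this is routine rather than a genuine obstacle; there is no serious difficulty here beyond being careful with the inequality direction dictated by $t^{-2}\geq 1$.
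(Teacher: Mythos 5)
Your proposal is correct and follows essentially the same route as the paper: the min--max principle combined with the change of variables under $\psi_t$, the Jacobian factor $t$ cancelling in the Rayleigh quotient, and the pointwise gradient comparison coming from $t^{-2}\ge 1$ (the paper writes the equivalent identity $|\nabla(w\circ\psi_t)|^2=|\nabla w|^2\circ\psi_t+(t^2-1)\,|\partial_y w|^2\circ\psi_t$). One small correction to your final assembly: to conclude $\mu_k(\Omega_t)\ge\mu_k(\Omega)$ you should start from an \emph{arbitrary} $(k+1)$-dimensional subspace $V\subset H^1(\Omega_t)$ and pull it back to $W=\{v\circ\psi_t:\ v\in V\}\subset H^1(\Omega)$, so that the maximal Rayleigh quotient over $V$ dominates that over $W$, which in turn dominates $\mu_k(\Omega)$, and only then minimize over $V$ --- pushing forward the optimal subspace for $\Omega$, as you wrote, only bounds one particular competitor for $\mu_k(\Omega_t)$ from below; your own observation that the correspondence is a dimension-preserving bijection is exactly what makes this repair immediate, and it is how the paper phrases the step.
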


\begin{proof}

We start by noticing that $\psi_t$ is a diffeomorphism from $\R^2$ to $\R^2$ so that all subspaces $H$ of dimension $k$ in $H^1(\Omega)$ are of the form $\psi_t^{-1}(L)$ for a subspace $L$ of dimension $k$ in  $H^1(\Omega_t)$.

Thus let us pick any subspace $L$ in $H^1(\Omega_t)$ and let $w \in L$. Then the function $u=w \circ \psi_t$ belongs to $\psi_t^{-1}(L)=:H$ and by the change of variables formula we have 
 $$ \frac{\int_{\Omega_t} |\nabla w|^2  \;\mathrm{d}x}{\int_{\Omega_t} w^2 \;\mathrm{d}x} = \frac{\int_{\Omega} |\nabla w|^2\circ \psi_t |J\psi_t| \;\mathrm{d}x}{\int_{\Omega}w^2\circ \psi_t  |J\psi_t|\;\mathrm{d}x}.$$
 Now we wish to estimate the above quotient. First of all we remark that  the Jacobian of $\psi_t$ is simply $t$, in other words 
 $$|J\psi_t| =t  \quad \quad |J \psi_t^{-1}| =\frac{1}{t}$$
 so we deduce that 
  $$ \frac{\int_{\Omega_t} |\nabla w|^2  \;\mathrm{d}x}{\int_{\Omega_t} w^2 \;\mathrm{d}x} = \frac{\int_{\Omega} |\nabla w|^2\circ \psi_t \;\mathrm{d}x}{\int_{\Omega}w^2\circ \psi_t \;\mathrm{d}x}= \frac{\int_{\Omega} |\nabla w|^2\circ \psi_t \;\mathrm{d}x}{\int_{\Omega}(w\circ \psi_t)^2 \;\mathrm{d}x}.$$

Now we want to compare $|\nabla w|^2\circ \psi_t$ with $|\nabla (w\circ \psi_t)|^2$ which of course are not the same. It is immediate to check that the function $u:=w \circ \psi_t$ defined in $\Omega$ satisfies
$$
|\nabla u|^2 = |\nabla w|^2 \circ \psi_t + (t^2-1) |\partial_y w|^2 \circ \psi_t.
$$

Therefore we get 
\begin{eqnarray}
 \int_{\Omega} |\nabla u|^2 \; \mathrm{d}x &=& \int_{\Omega} |\nabla w | ^2 \circ \psi_t  \;\mathrm{d}x  + (t^2-1)  \int_{\Omega}  |\partial_y w|^2 \circ \psi_t   \;\mathrm{d}x \notag \\
 &=&  \frac{1}{t}\int_{\Omega_t}  |\nabla w|^2 \;\mathrm{d}x +(t^2-1)  \int_{\Omega}  |\partial_y w|^2\circ \psi_t  \;\mathrm{d}x. \notag
 \end{eqnarray}

 Returning back to the Rayleigh quotient, we obtain that for any $w \in L$,
  $$ \frac{\int_{\Omega} |\nabla (w\circ \psi_t)|^2  \;\mathrm{d}x}{\int_{\Omega}(w\circ \psi_t)^2  \;\mathrm{d}x} \leq   \frac{\int_{\Omega_t} |\nabla w|^2  \;\mathrm{d}x}{\int_{\Omega_t}w^2 \;\mathrm{d}x} + \frac{(t^2-1)  \int_{\Omega}  |\partial_y w\circ \psi_t |^2  \;\mathrm{d}x} {\frac{1}{t}\int_{\Omega_t}w^2 \;\mathrm{d}x} \leq \frac{\int_{\Omega_t} |\nabla w|^2  \;\mathrm{d}x}{\int_{\Omega_t}w^2 \;\mathrm{d}x},
$$
where we have used, in the last inequality, that $t\in (0,1)$.
By taking now the maximum in the $w$ variable we arrive at
$$\max_{w\in L}\frac{\int_{\Omega_t} |\nabla w|^2  \;\mathrm{d}x}{\int_{\Omega_t}w^2 \;\mathrm{d}x}  \geq    \max_{v \in \psi_t^{-1}(L)} \frac{\int_{\Omega} |\nabla v|^2 \;\mathrm{d}x}{\int_{\Omega}v^2 \;\mathrm{d}x }.
$$
Passing to the min in $L$ yields
$$\mu_k(\Omega_t)\geq \mu_k(\Omega),$$
as desired.
\end{proof}
 
 \section{An explicit Poincar\'e inequality for planar domains with cusps}
 
Let us give a Poincar\'e inequality for our non-convex planar domain $\Omega_2$ appearing in the proof of Theorem \ref{doubmax}. For this section, for brevity, we will simply write $\Omega$ and $D$ instead of $\Omega_2$ and $D^*$.

For the construction, we start with choosing a point $P$ on the boundary $\partial D$ of  the convex domain $D$.
Then we choose a supporting line $L$ of $\partial D$ at $P$ and choose the $y$-axis as the exterior normal line of $\partial D$ at $P$ which is orthogonal to $L$.
 Next we choose a point $Q$ on the $y$-axis outside $D$ and finally find the two tangent lines (or supporting lines) of $D$ through $Q$.
This construction allows us to locally express a portion of $\partial D$ as the graph of a concave
function $f:[-a,b]\to\mathbb{R}$, ($a,b >0$)
having its maximum value $f(0)$ where $f$ increases in $[-a, 0]$ and decreases in $[0,b]$.
\begin{figure}[h]
\centering
\includegraphics[scale=0.2]{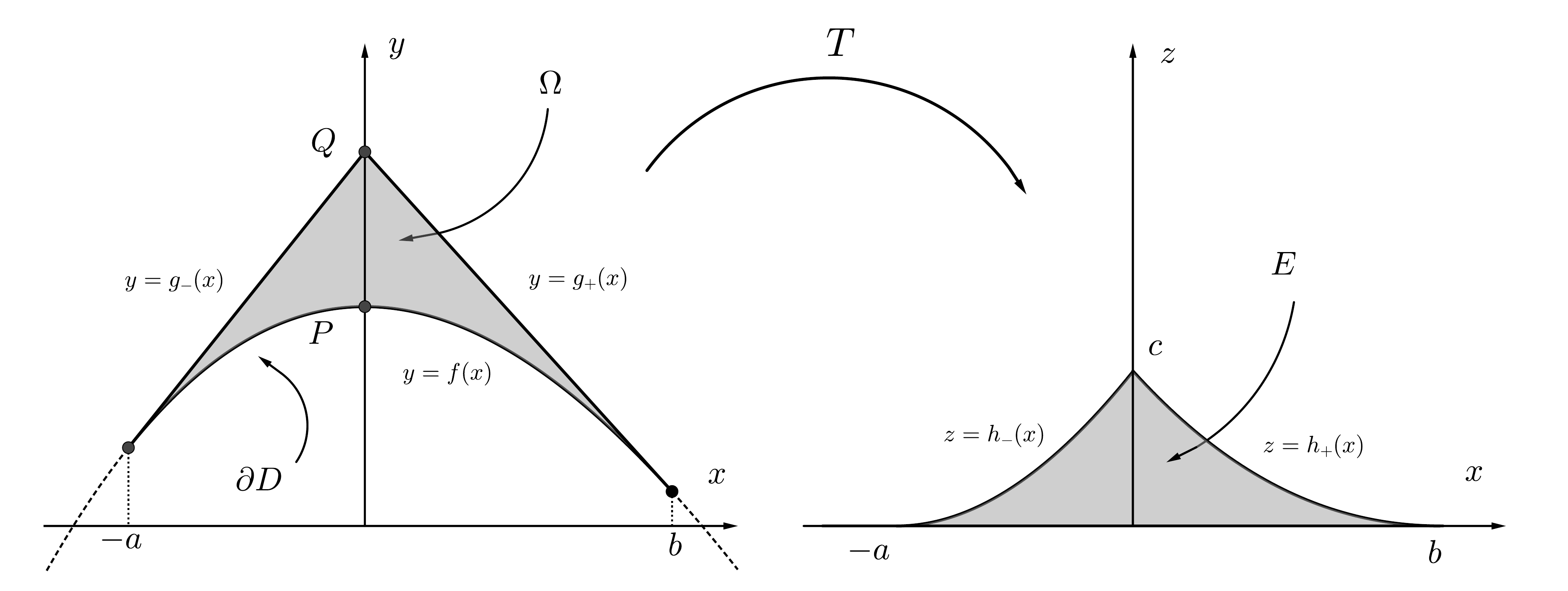}
\caption{Construction of the concave function $f$.}\label{fig-Poincare}
\end{figure}
By construction the two tangent lines of the curve $y=f(x)$ at points $(-a,f(-a)), (b,f(b))$ intersect at the point $Q$ on $y-$axis. Let these tangent lines be given  by $y=g_-(x), y=g_+(x),$ respectively. 
Define a piecewise linear function $g : [-a,b] \to \mathbb R$ by 
\begin{equation}
\label{piecewise linear functions}
g(x) =
\begin{cases} 
g_-(x)\ \text{ if } -a\le x \le 0,\\
g_+(x)\ \text{ if } 0< x \le b.
\end{cases}
\end{equation}
Assume that $g(x) > f(x)$ if $-a < x < b$. Set $h(x) = g(x)-f(x)$ for $x\in[-a,b]$. Notice that $h$  increases in $[-a, 0]$ and decreases in $[0,b]$. Set $c = h(0) (= \max h > 0)$. 
Let $\Omega$ be the non-convex planar domain given by
$$
\Omega=\{ (x,y) : -a< x < b, f(x) < y < g(x) \}.
$$

Introduce the plane transformation 
\begin{equation}\label{mapT}
T: (x, y) \mapsto (x, z) = (x, y-f(x)).
\end{equation}
Then $\Omega$ corresponds to the planar domain $E$ given by
$$
E:=T(\Omega)=\{ (x,z) : -a < x <b, 0 < z < h(x) \}.
$$
The set  $E$ is non-convex, since $h$ is convex in each of the intervals $[-a, 0]$ and $[0,b]$, and it has its maximum at $0$. Notice that  $|E|=|\Omega|$  where $|\cdot|$ denotes the area.

For convenience, corresponding to \eqref{piecewise linear functions}, we write
$$
h(x) =
\begin{cases} 
h_-(x)\ \text{ if } -a\le x \le 0,\\
h_+(x)\ \text{ if } 0< x \le b.
\end{cases}
$$

Let us first prove a Poincar\'e inequality for $E$. Then, by virtue of $T$, we may get a Poincar\'e inequality for $\Omega$.

Let $(x_1, z_1), (x_2,z_2) \in E$. Distinguish two cases: (i) $ z_1 < z_2$, (ii) $z_1 \ge z_2$.
In case (i) $(x_2,z_1)\in E$ and in case (ii) $(x_1,z_2)\in E$. Let $u \in C^1(E)\cap H^1(E)$.

In case (i), we have
\begin{eqnarray}
u(x_1,z_1) - u(x_2,z_2) &=&u(x_1,z_1) - u(x_2,z_1) + u(x_2,z_1)- u(x_2,z_2)\nonumber\\
&=&\int_{x_2}^{x_1}\partial_xu(x,z_1)\mathrm{d}x + \int_{z_2}^{z_1}\partial_zu(x_2,z)\mathrm{d}z\ (= L_1).\label{case (i)}
\end{eqnarray}
In case (ii), we have
\begin{eqnarray}
u(x_1,z_1) - u(x_2,z_2) &=&u(x_1,z_1) - u(x_1,z_2) + u(x_1,z_2)- u(x_2,z_2)\nonumber\\
&=&\int_{z_2}^{z_1}\partial_zu(x_1,z)\mathrm{d}z+ \int_{x_2}^{x_1}\partial_xu(x,z_2)\mathrm{d}x\ (= L_2).\label{case (ii)}
\end{eqnarray}
For each $(x_1, z_1) \in E$, we integrate \eqref{case (i)} in $(x_2, z_2) \in E \cap \{  z_1 < z_2\}$ and \eqref{case (ii)}  in $(x_2, z_2) \in E \cap \{  z_1\ge z_2\}$ and then sum the resulting equations to have 
\begin{eqnarray}
|E|( u(x_1,z_1) - u_{E})=\int_{E \cap \{  z_1 < z_2\}}L_1\mathrm{d}x_2\mathrm{d}z_2 + \int_{E \cap \{  z_1 \ge z_2\}}L_2 \mathrm{d}x_2 \mathrm{d}z_2,\nonumber
\end{eqnarray}
where we set $u_{E}= \frac 1{|E|}\int_{E}u(x,z)\mathrm{d}x\mathrm{d}z$.
Then, the Schwarz  inequality gives
\begin{equation}
\label{due to Schwarz}
|u(x_1,z_1) - u_{E}|^2\le \frac 2{|E|}\left\{ \int_{E \cap \{  z_1 < z_2\}}(L_1)^2 \mathrm{d}x_2 \mathrm{d}z_2 + \int_{E \cap \{  z_1 \ge z_2\}}(L_2)^2 \mathrm{d}x_2 \mathrm{d}z_2\right\}.
\end{equation}
Here, the Schwarz inequality applied to \eqref{case (i)} and \eqref{case (ii)} gives also the following:
\begin{eqnarray}
\frac12(L_1)^2\!\!&\le&\!\! |x_1\!-\!x_2|\left|\int_{x_2}^{x_1}|\partial_xu(x,z_1)|^2 \mathrm{d}x\right|+|z_1\!-\!z_2|\left|\int_{z_2}^{z_1}|\partial_zu(x_2,z)|^2 \mathrm{d}z\right|\nonumber\\
\!\!&\le&\!\!(a\!+\!b)\!\!\int_{h_-^{-1}(z_1)}^{h_+^{-1}(z_1)}\!\!|\partial_xu(x,z_1)|^2 \mathrm{d}x+c\int_{0}^{h(x_2)}\!\!\!\!|\partial_zu(x_2,z)|^2 \mathrm{d}z\ (= M_1),\label{inequality for L1}\\
\frac12(L_2)^2\!\!&\le&\!\! |z_1\!-\!z_2|\left|\int_{z_2}^{z_1}|\partial_zu(x_1,z)|^2 \mathrm{d}z\right|+|x_1\!-\!x_2|\left|\int_{x_2}^{x_1}|\partial_xu(x,z_2)|^2 \mathrm{d}x\right|\nonumber\\
\!\!&\le&\!\!c\int_{0}^{h(x_1)}\!\!\!\!|\partial_zu(x_1,z)|^2 \mathrm{d}z+(a\!+\!b)\!\!\int_{h_-^{-1}(z_2)}^{h_+^{-1}(z_2)}\!\!|\partial_xu(x,z_2)|^2 \mathrm{d}x\ (= M_2).\label{inequality for L2}
\end{eqnarray}
Hence integrating \eqref{due to Schwarz} in $(x_1, z_1) \in E$ and combining three inequalities \eqref{due to Schwarz}, \eqref{inequality for L1}, \eqref{inequality for L2} yield that
\begin{eqnarray}
\int_E|u(x_1,z_1) - u_{E}|^2\mathrm{d}x_1\mathrm{d}z_1&\le&\frac 4{|E|}\int_E\mathrm{d}x_1\mathrm{d}z_1\int_E \mathrm{d}x_2 \mathrm{d}z_2\left\{M_1+M_2\right\}\nonumber\\
&\le& 8\left\{(a\!+\!b)^2\int_E(\partial_xu)^2\mathrm{d}x\mathrm{d}z+c^2\int_E(\partial_zu)^2\mathrm{d}x\mathrm{d}z\right\}\nonumber\\
&\le& 8\max\{(a\!+\!b)^2,c^2\}\int_E|\nabla u|^2\mathrm{d}x\mathrm{d}z.\nonumber 
\end{eqnarray}
Thus we obtain
\begin{proposition}[Poincar\'e inequality for $E$] For every $u \in H^1(E)$,
$$
\int_E|u - u_{E}|^2\mathrm{d}x\mathrm{d}z\le 8\max\{(a\!+\!b)^2,c^2\}\int_E|\nabla u|^2\mathrm{d}x\mathrm{d}z.
$$
\end{proposition}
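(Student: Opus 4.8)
The plan is to prove this mean–oscillation (Poincar\'e) inequality by the classical averaging method, exploiting the special geometry of $E$: namely that $E$ sits above the flat floor $\{z=0\}$ for every $x\in(-a,b)$, so that any two of its points can be joined by an $L$-shaped path contained in $E$. Concretely, I would start from the identity
$$
|E|\big(u(x_1,z_1)-u_E\big)=\int_E\big(u(x_1,z_1)-u(x_2,z_2)\big)\,\mathrm{d}x_2\,\mathrm{d}z_2,
$$
which follows at once from the definition of $u_E$, and then represent each difference $u(x_1,z_1)-u(x_2,z_2)$ as a sum of two one–variable integrals of $\partial_x u$ and $\partial_z u$ taken along the two legs of a corner path.

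The key geometric step — and the one I expect to be the main obstacle — is to verify that these corner paths stay inside $E$. Here one splits into the two cases $z_1<z_2$ and $z_1\ge z_2$: if $z_1<z_2$ then $(x_2,z_1)\in E$ (since $0<z_1<z_2<h(x_2)$ keeps us below the graph while the floor is at $0$), so one can move horizontally at height $z_1$ and then vertically at $x=x_2$, producing the expression $L_1$; the symmetric choice in the case $z_1\ge z_2$ produces $L_2$. This containment relies crucially on the flat lower boundary and is exactly where the structure of the cusped domain enters — for a generic non–convex domain a horizontal segment at fixed height could exit the set, so this is the step that genuinely uses the construction of $E$.

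With these representations in hand the remainder is two applications of the Cauchy–Schwarz inequality. First I would apply it to the averaging identity to bound $|u(x_1,z_1)-u_E|^2$ by the average of $L_1^2$ and $L_2^2$; then I would apply it to each line integral, bounding the horizontal legs by their length $\le a+b$ and the vertical legs by their length $\le c$, which turns $L_i^2$ into the quantities $M_i$ controlled by $\int(\partial_x u)^2$ and $\int(\partial_z u)^2$ over the relevant slices. Integrating in $(x_1,z_1)\in E$ and using Fubini — noting that the horizontal slice of $E$ at height $z_1$ is $(h_-^{-1}(z_1),h_+^{-1}(z_1))$, so the slice integrals reassemble into full integrals over $E$ — collects the horizontal derivative terms with weight $(a+b)^2$ and the vertical ones with weight $c^2$, yielding the constant $8\max\{(a+b)^2,c^2\}$. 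Finally I would remove the smoothness hypothesis by a density argument, approximating a general $u\in H^1(E)$ by functions in $C^1(E)\cap H^1(E)$, which suffices since all the quantities appearing are continuous for the $H^1$ norm.
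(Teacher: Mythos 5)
Your proposal is correct and follows essentially the same route as the paper's proof: the averaging identity for $u-u_E$, the two-case L-shaped paths (using that the horizontal slice of $E$ at each height is a single interval $(h_-^{-1}(z),h_+^{-1}(z))$ because $h$ is unimodal, and that the floor is flat), two applications of Cauchy--Schwarz with the legs bounded by $a+b$ and $c$, and Fubini to reassemble the slice integrals into the constant $8\max\{(a+b)^2,c^2\}$. No substantive differences to report.
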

Then, by virtue of $T$ defined in \eqref{mapT}, we may get the following Poincar\'e inequality for $\Omega$.
\begin{proposition}[Poincar\'e inequality for $\Omega$] For every $v \in H^1(\Omega)$,
$$
\int_\Omega|v - v_{\Omega}|^2\mathrm{d}x\mathrm{d}y\le 16(L^2+1)\max\{(a\!+\!b)^2,c^2\}\int_\Omega|\nabla v|^2\mathrm{d}x\mathrm{d}y,
$$
where $L$ is the Lipschitz constant of the function $f$.
\end{proposition}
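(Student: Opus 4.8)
The plan is to transport the Poincar\'e inequality already established for $E$ back to $\Omega$ via the change of variables $T$ introduced in \eqref{mapT}. First I would record that $T(x,y)=(x,y-f(x))$ is a bijection from $\Omega$ onto $E$ whose Jacobian is identically equal to $1$; in particular $T$ is area-preserving, so $|E|=|\Omega|$. Given $v\in H^1(\Omega)$, I set $u:=v\circ T^{-1}$, that is $u(x,z)=v(x,z+f(x))$. Since $u(T(x,y))=v(x,y)$ and $T$ preserves area, the averages coincide, $u_E=v_\Omega$, and
$$
\int_\Omega |v-v_\Omega|^2\,\mathrm{d}x\mathrm{d}y=\int_E|u-u_E|^2\,\mathrm{d}x\mathrm{d}z.
$$

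Next I would relate the two Dirichlet energies. As $f$ is concave it is Lipschitz with constant $L$, hence differentiable almost everywhere, and the chain rule (valid for Sobolev functions composed with the bi-Lipschitz map $T^{-1}$) gives $\partial_z u=(\partial_y v)\circ T^{-1}$ and $\partial_x u=(\partial_x v)\circ T^{-1}+f'\,(\partial_y v)\circ T^{-1}$. Evaluating at $T(x,y)$ and using $|f'|\le L$ together with the elementary bound $(s+f't)^2\le 2s^2+2L^2t^2$, I obtain the pointwise estimate
$$
\big(|\nabla u|^2\circ T\big)(x,y)=\big(\partial_x v+f'\partial_y v\big)^2+(\partial_y v)^2\le 2(\partial_x v)^2+(2L^2+1)(\partial_y v)^2\le 2(L^2+1)\,|\nabla v|^2.
$$
Integrating over $\Omega$ and using once more that $T$ is area-preserving yields $\int_E|\nabla u|^2\,\mathrm{d}x\mathrm{d}z=\int_\Omega\big(|\nabla u|^2\circ T\big)\,\mathrm{d}x\mathrm{d}y\le 2(L^2+1)\int_\Omega|\nabla v|^2\,\mathrm{d}x\mathrm{d}y$.

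Finally I would apply the Poincar\'e inequality for $E$ to the function $u\in H^1(E)$ and chain the two estimates:
$$
\int_\Omega|v-v_\Omega|^2\,\mathrm{d}x\mathrm{d}y=\int_E|u-u_E|^2\,\mathrm{d}x\mathrm{d}z\le 8\max\{(a+b)^2,c^2\}\int_E|\nabla u|^2\,\mathrm{d}x\mathrm{d}z\le 16(L^2+1)\max\{(a+b)^2,c^2\}\int_\Omega|\nabla v|^2\,\mathrm{d}x\mathrm{d}y,
$$
which is exactly the asserted constant. The only genuinely delicate point is the justification of the chain rule when $f$ is merely Lipschitz: I would first run the computation for $v\in C^1(\overline{\Omega})$, where $f'$ exists off a negligible set and all manipulations are classical, and then pass to general $v\in H^1(\Omega)$ by density, invoking the fact that $T$ is bi-Lipschitz so that $v\mapsto v\circ T^{-1}$ is a bounded isomorphism of the relevant Sobolev spaces. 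Everything else is routine bookkeeping of the constants, already tracked above.
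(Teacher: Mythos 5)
Your proof is correct and follows essentially the same route as the paper: transport $v$ to $u=v\circ T^{-1}$ on $E$, use that $T$ is area-preserving to identify means and $L^2$ norms, bound $|\nabla u|^2\le 2(L^2+1)|\nabla v|^2$ pointwise via $|f'|\le L$, and chain with the Poincar\'e inequality for $E$. The only difference is that you explicitly justify the chain rule for the bi-Lipschitz change of variables by a density argument, a point the paper passes over with the remark that $u\in H^1(E)$ since $f$ is Lipschitz.
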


\begin{proof} 
Let $v \in H^1(\Omega)$. Set $u(x,z) = v(x, f(x)+z)$. Then $u \in H^1(E)$, since $f$ is Lipschitz continuous.
Since the Jacobian of the transformation $T$ equals $1$, we have
$$
|E| = |\Omega|, \quad u_E=v_\Omega,  \quad \int_E|u - u_{E}|^2\mathrm{d}x\mathrm{d}z=\int_\Omega|v - v_{\Omega}|^2\mathrm{d}x\mathrm{d}y.
$$
Observe that
$$
|\nabla u|^2=u_x^2+u_z^2=(v_x+v_yf^\prime(x))^2+v_y^2\le 2v_x^2+2v_y^2(f^\prime(x))^2+v_y^2\le2(L^2+1)|\nabla v|^2.
$$
Hence 
$$
\int_E|\nabla u|^2\mathrm{d}x\mathrm{d}z\le 2(L^2+1)\int_\Omega|\nabla v|^2\mathrm{d}x\mathrm{d}y.
$$
Therefore a Poincar\'e inequality for $E$ yields a Poincar\'e inequality for $\Omega$.
\end{proof}

We conclude that 
\begin{corollary} 
$$
\mu_1(\Omega) \ge \frac 1{ 16(L^2+1)\max\{(a\!+\!b)^2,c^2\}}.
$$
\end{corollary}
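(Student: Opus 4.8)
The plan is to read the corollary off directly from the Poincar\'e inequality for $\Omega$ just established, via the variational characterization of the first nontrivial Neumann eigenvalue. Since $\Omega$ is bounded but not Lipschitz (it carries two cusps), I would first recall that $\mu_1(\Omega)$ is defined here as the infimum of the Rayleigh quotient among functions orthogonal to the constants, exactly as in the discussion following Theorem \ref{doubmax}:
$$
\mu_1(\Omega) = \inf\left\{ \frac{\int_\Omega |\nabla v|^2\,\mathrm{d}x\mathrm{d}y}{\int_\Omega |v - v_\Omega|^2\,\mathrm{d}x\mathrm{d}y} \;:\; v \in H^1(\Omega),\ \int_\Omega |\nabla v|^2\,\mathrm{d}x\mathrm{d}y > 0 \right\},
$$
where $v_\Omega$ denotes the mean of $v$ over $\Omega$. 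Adopting this Rayleigh-quotient infimum as the very definition of $\mu_1(\Omega)$ is what makes the argument work without requiring the existence of an honest minimizing eigenfunction on a domain with cusps.

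Next I would simply invoke the Poincar\'e inequality for $\Omega$ from the preceding proposition: for every $v \in H^1(\Omega)$,
$$
\int_\Omega |v - v_\Omega|^2\,\mathrm{d}x\mathrm{d}y \le 16(L^2+1)\max\{(a+b)^2, c^2\}\int_\Omega |\nabla v|^2\,\mathrm{d}x\mathrm{d}y.
$$
For any admissible $v$ (one with nonvanishing Dirichlet energy) this rearranges into a lower bound on the Rayleigh quotient:
$$
\frac{\int_\Omega |\nabla v|^2\,\mathrm{d}x\mathrm{d}y}{\int_\Omega |v - v_\Omega|^2\,\mathrm{d}x\mathrm{d}y} \ge \frac{1}{16(L^2+1)\max\{(a+b)^2, c^2\}}.
$$
Taking the infimum over all such $v$ on the left-hand side yields precisely the claimed estimate for $\mu_1(\Omega)$.

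There is essentially no obstacle to overcome here: all of the analytic effort was already spent in establishing the Poincar\'e constant through the transformation $T$ and the case analysis on $E$, and the corollary is merely a reformulation of that inequality. The only point deserving a word of care is the one noted above, namely that $\Omega$ is non-Lipschitz, so one cannot appeal to standard spectral theory on a smooth domain; but because $\mu_1(\Omega)$ has been defined as the Rayleigh-quotient infimum, the passage from the Poincar\'e inequality to the eigenvalue bound is immediate.
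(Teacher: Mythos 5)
Your proof is correct and matches the paper's (implicit) argument: the corollary is stated without proof precisely because it is the immediate combination of the Poincar\'e inequality for $\Omega$ with the Rayleigh-quotient definition of $\mu_1(\Omega)$ adopted earlier for this cusped, non-Lipschitz domain. You correctly identify that this definitional point is the only subtlety.
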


\bibliography{biblio}
\bibliographystyle{plain}
\end{document}